\newtheorem{theorem}{Theorem}
\newtheorem{lemma}{Lemma}[section]
\newtheorem{proposition}{Proposition}[section]
\newtheorem{corollary}{Corollary}[section]
\newtheorem{question}{Question}
\theoremstyle{definition}
\newtheorem{definition}{Definition}[section]
\newtheorem{example}{Example}[section]
\newtheorem{remark}{Remark}[section]
\numberwithin{equation}{section}
\begin{document}


\baselineskip=17pt


\title{Which hyponormal block Toeplitz operators are either normal or analytic?}
\author{Senhua Zhu}
\address{Department of Mathematics,
Shantou University,
Shantou, Guangdong, 515063, P. R. China}
\email{shzhu@stu.edu.cn}

\author{Yufeng Lu}
\address{Department of Mathematics Sciences, Dalian University of Technology,
Dalian, Liaoning, 116024, P. R. China}
\email{lyfdlut@dlut.edu.cn}

\author{Chao Zu*}
\address{Department of Mathematics Sciences, Dalian University of Technology,
Dalian, Liaoning, 116024, P. R. China}
\email{zuchao@mail.dlut.edu.cn}

\date{}


\begin{abstract}
In this paper, we continue Curto-Hwang-Lee's work to study the connection between hyponormality and subnormality for block Toeplitz operators acting on the vector-valued Hardy space of the unit circle. Curto-Hwang-Lee's work focuses primarily on hyponormality and subnormality of block Toeplitz operators with rational symbols. By studying the greatest common divisor of matrix-valued inner functions and the ``weak" commutativity of matrix-valued inner functions, we extended Curto-Hwang-Lee's result to block Toeplitz operators with symbols of bounded type.  More precisely, we proved that if $\Psi,\Psi^{\ast}$ are matrix-valued functions of bounded type and the inner part of $\Psi$ of Douglas-Shapiro-Shields factorization is a scalar inner function, then every hyponormal Toeplitz operator $T_{\Psi}$ whose square is also hyponormal must be either normal or analytic.

\end{abstract}

\subjclass[2010]{Primary 47B35, 46E40; Secondary  47B20}
\thanks{*Corresponding author.}
\keywords{the greatest common divisor, vector-valued Hardy space, Toeplitz operator}

\maketitle

\section{Introduction}

Studying the structure and properties of operators from different perspectives is an important way to recognize and understand operators. This will lead to many new mathematical ideas and methods and generate numerous new mathematical problems.
One approach to understand general operators is to study some ``relatively simple"  special classes of operators.
The study of special classes of operators will provide a great supply of examples for the study of general operators and generate concepts and techniques for the general picture.  For example, the study of compressed shift operators on the Hardy space over the unit disk has given rise to several celebrated and influential theories, such as Nagy-Foias theory on operator models, Ando's dilation theorem of commuting operator pairs. Moreover, the study of compressed shift operators on the Hardy space over polydiscs remains a vital component of multivariable operator theory \cite{LYZ2023,Yang2019,ZYL2020}.

Operators like shift operators, compressed shift operators, Toeplitz operators, normal operators, subnormal operators, hyponormal operators, etc. are important classes of special operators.
Toeplitz operators, block Toeplitz operators, and block Toeplitz determinants arise naturally in several fields of mathematics and various problems in physics (in particular, in the field of quantum mechanics). For example, the spectral theory of Toeplitz operators plays an important role in the study of solvable models in quantum mechanics \cite{Prun2003} and in the study of the one-dimensional Heisenberg Hamiltonian of ferromagnetism \cite{Damak2006};  the theory of block Toeplitz determinants is used in the
study of the classical dimer model \cite{Basor2007} and in the study of the vicious walker model \cite{Hikami2003}; the theory of block Toeplitz operators is also used in the study of Gelfand-Dickey Hierarchies \cite{Cafa2008}. On the other hand, the theory of hyponormal and subnormal operators is a broad and highly developed area, which has made important contributions to a number of problems in functional analysis, operator theory, and mathematical physics (see, for example, \cite{Haya2000, Ifan1971, Szafra2000} for applications to related mathematical physics problems). Thus, it becomes of central significance to describe in detail hyponormality and subnormality for (block) Toeplitz operators \cite{Cowen1988,Cowen,Curto1990,CL2001,CL2002,Gu2002}. This paper is concerned with the gap between hyponormality and subnormality for block Toeplitz operators. More precisely, we address a reformulation of Halmos's Problem 5 in the case of block Toeplitz operators: Which hyponormal block Toeplitz operators are either normal or analytic?

Let $H^{2}$ denote the Hardy space on the unit circle. For $\varphi\in L^{\infty}$ of the unit circle, the Toeplitz operator $T_{\varphi}$ is defined on $H^{2}$ by $T_{\varphi}g=P(\varphi g)$, where $P$ is the orthogonal projection of $L^{2}$ onto $H^{2}$.
Halmos's Problem 5 (see \cite{Halmos1970}) asked:
\begin{center}
{\it Is every subnormal Toeplitz operator either normal or analytic?}
\end{center}
Although the answer was shown to be negative in general by serval authors (Sun \cite{Sun1984,Sun1985}, Cowen and Longe \cite{CL1984}),
an interesting partial affirmative answer to Halmos's Problem 5 was given by Abrahamse \cite{Abrahamse1976} under some suitable reformulation.
 The symbol $\varphi$ is said to be of $bounded~type$ (or in the Nevanlinna class) if there are analytic functions $\psi_{1},\psi_{2}\in H^{\infty}$ such that $\varphi=\psi_{1}/\psi_{2}$ almost everywhere on the unit circle. Functions of bounded type have been studied extensively in the literatures (e.g. \cite{Abrahamse1976,CL2001,DSS1970}) and these studies have made a significant contribution to establishing the rich interactions between function theory and operator theory.
 \newtheorem*{abra}{Abrahamse's Theorem}
\begin{abra}
 Suppose $\varphi \in L^{\infty}$ and $\varphi$ or $\overline{\varphi}$ is of bounded type.
If
\begin{description}
  \item[i] $T_{\varphi}$ is hyponormal,
  \item[ii] $\ker[T_{\varphi}^{\ast},T_{\varphi}]$ is invariant for $T_{\varphi}$,
\end{description}
 then $T_{\varphi}$ is either normal or analytic.
 \end{abra}
In 1988, the hyponormality of the Toeplitz operator $T_{\varphi}$ is characterized by a property of the symbol $\varphi$ via Cowen's Theorem \cite{Cowen1988}.
 \newtheorem*{acow}{Cowen's Theorem}
\begin{acow}
Suppose $\varphi \in L^{\infty}$,
$T_{\varphi}$ is hyponormal if and only if there exists $K\in H^{\infty}$ with $\|K\|\leq 1$ such that $\varphi+\overline{\varphi}K \in H^{\infty}$.

\end{acow}

This paper focuses primarily on hyponormality and subnormality of block Toeplitz operators with symbols of bounded type. For the general theory of subnormal and hyponormal operators, we refer to \cite{conway1991,mp1989}.
To describe our results, we first need to review a few essential facts and some notations about block Toeplitz operators, and it can be found in \cite{CHL2021,NF,Nikolskii1986,Niko,Peller2003}.

 Let $B(D,E)$ denote the set of bounded linear operators from the Hilbert space $D$ to $E$, $B(E):=B(E,E)$. If $\dim(E)=n$, then $B(E)$ is the set of $n\times n$ matrices denoted by $\mathbb{M}_{n}$. Suppose $A,B \in B(E)$ and define $[A,B]:=AB-BA$. $A$ is said to be \textit{normal} if $[A^{\ast}, A]=0$, \textit{hyponormal} if $[A^{\ast}, A]\geq 0$, and \textit{subnormal} if $A$ has a normal extension, i.e., $A=N\mid_{E}$, where $N$ is a normal operator on some Hilbert space $H \supset E  $ such that $E$ is an invariant subspace of $N$.

Suppose $E$ is a complex separable Hilbert space, let $L^{2}(E)$ be the $E$-valued norm square-integrable measurable functions on $\mathbb{T}$, and let $H^{2}(E)$ be the corresponding Hardy space. In particular, $L^{2}(\mathbb{C}^{n})=L^{2} \otimes \mathbb{C}^{n}$, $H^{2}(\mathbb{C}^{n})=H^{2} \otimes \mathbb{C}^{n}$.
Suppose $\Phi$ is a matrix-valued function in $ L^{\infty}(\mathbb{M}_{n})=L^{\infty}\otimes \mathbb{M}_{n}$, the \textit{block Toeplitz operator} $T_{\Phi}$ with the symbol $\Phi$ is defined by
\begin{equation*}
  \begin{split}
    T_{\Phi}: H^{2}(\mathbb{C}^{n})& \rightarrow H^{2}(\mathbb{C}^{n})\\
     f&\mapsto T_{\Phi}f=P(\Phi f),
  \end{split}
\end{equation*}
where $P$ denotes the orthogonal projection from $L^{2}(\mathbb{C}^{n})$ onto $H^{2}(\mathbb{C}^{n})$. Let $J$ be the unitary operator on $L^{2}(\mathbb{C}^{n})$ defined by
\begin{equation*}
  \begin{split}
    J: L^{2}(\mathbb{C}^{n}) &\rightarrow L^{2}(\mathbb{C}^{n})\\
     f&\mapsto Jf=\overline{z}f(\overline{z}),
  \end{split}
\end{equation*}
then $J^{\ast}=J$ and $J^{2}=I$.
The \textit{block Hankel operator} $H_{\Phi}$ with the symbol $\Phi$ is defined by
\begin{equation*}
  \begin{split}
    H_{\Phi}: H^{2}(\mathbb{C}^{n})& \rightarrow H^{2}(\mathbb{C}^{n})\\
     f &\mapsto H_{\Phi}f=JP_{-}(\Phi f),
  \end{split}
\end{equation*}
where $P_{-}=I-P$. If we set $ H^{2}(\mathbb{C}^{n})=H^{2}\oplus \ldots \oplus H^{2} $ then we see that
\begin{equation}\nonumber
 T_{\Phi}=\left(
  \begin{array}{ccc}
    T_{\varphi_{11}} & \cdots & T_{\varphi_{1n}} \\
    \vdots & \vdots & \vdots \\
    T_{\varphi_{n1}} & \cdots & T_{\varphi_{nn}}
  \end{array}
\right),
H_{\Phi}=\left(
  \begin{array}{ccc}
    H_{\varphi_{11}} & \cdots & H_{\varphi_{1n}} \\
    \vdots & \vdots & \vdots \\
    H_{\varphi_{n1}} & \cdots & H_{\varphi_{nn}}
  \end{array}
\right),
\end{equation}
where
\begin{equation}\nonumber
 \Phi=\left(
  \begin{array}{ccc}
    \varphi_{11} & \cdots & \varphi_{1n} \\
    \vdots & \vdots & \vdots \\
    \varphi_{n1} & \cdots & \varphi_{nn}
  \end{array}
\right) \in L^{\infty}(\mathbb{M}_{n}).
\end{equation}
For a matrix-valued function $\Phi=(\varphi_{ij}) \in L^{\infty}(\mathbb{M}_{n})$, $\Phi$ is said to be of \textit{bounded type} if all entries $\varphi_{ij},1\leq i,j\leq n$ of $\Phi$ are of bounded type, and we say that $\Phi$ is \textit{rational} if each $\varphi_{ij}$ is a rational function.

In 2006, Gu, Hendricks and Rutherford \cite{GHR2006}  extended Cowen's theorem to the matrix-valued setting. They characterized the hyponormality of block Toeplitz operators in terms of their symbols. Their characterization for hyponormality of block Toeplitz operator resembles Cowen's Theorem except for an additional condition---the normality condition of the symbol.
 \newtheorem*{agu}{Hyponormality of block Toeplitz operators}
\begin{agu}
Let $\Phi \in L^{\infty}(\mathbb{M}_{n})$. The block Toeplitz operator $T_{\Phi}$ is
 hyponormal if and only if the following two conditions hold:
 \begin{description}
   \item[i] $\Phi$ is normal, i.e. $\Phi^{\ast}(z)\Phi(z)=\Phi(z)\Phi^{\ast}(z)$ for almost every $z\in \mathbb{T}$.
   \item[ii] There exists a matrix $K(z) \in H^{\infty}(\mathbb{M}_{n})$ such that $\|K\|_{\infty} \leq 1$ and $\Phi-K\Phi^{\ast} \in H^{\infty}(\mathbb{M}_{n})$.
 \end{description}
 there exists $K\in H^{\infty}$ with $\|K\|\leq 1$ such that $\Phi+\overline{\Phi}K \in H^{\infty}$.

\end{agu}

A matrix-valued function $\Theta \in H^{\infty}(\mathbb{M}_{n\times m})(=H^{\infty} \otimes \mathbb{M}_{n\times m})$ is said to be $inner$ if $\Theta(z)$ is isometric almost everywhere on $\mathbb{T}$, i.e., $\Theta^{\ast}\Theta=I_{m}$ a.e. on $\mathbb{T}$. In particular, if $n=m$, then $\Theta$ is said to be \textit{two-sided inner}, i.e., $\Theta(z)$ is unitary almost everywhere on $\mathbb{T}$.
The concept of inner function plays an important role in characterized invariant subspaces of shifted operator on the vector-valued Hardy space.

\newtheorem*{blh}{Beurling-Lax-Halmos Theorem}
\begin{blh}[matrix-valued version]
A nonzero closed subspace $M$ of $H^{2}(\mathbb{C}^{n})$ is invariant for the shifted operator $S$ on $H^{2}(\mathbb{C}^{n})$ if and only if
$$M=\Theta H^{2}(\mathbb{C}^{m}),$$
where $\Theta$ is an inner matrix-valued function in  $H^{\infty}(\mathbb{M}_{n\times m})(m\leq n)$. Furthermore, $\Theta$ is unique up to a unitary constant right factor, i.e., if $M=\Theta'H^{2}(\mathbb{C}^{r})$ for an inner function $\Theta'$ in $H^{\infty}(\mathbb{M}_{n\times r})(r\leq n)$, then $m=r$ and there exists a constant unitary operator $U$ from $\mathbb{C}^{m}$ onto $\mathbb{C}^{m}$ such that $\Theta=\Theta'U$. (As customarily done, we will say that two matrix-valued functions $A$ and $B$ are \textit{equal} if they are equal up to a unitary constant right factor.)
\end{blh}
 Beurling-Lax-Halmos Theorem  certainly plays a central role in establishing important early interplay results such as characterizing kernels of Hankel operators in terms of the bounded type-ness of the associated matrix-valued symbols. For the scalar setting, it was known (Lemma3 in \cite{Abrahamse1976}) that if $\varphi \in L^\infty$, $\varphi$ is of bounded type if and only if $\ker H_\varphi \neq \{0\}$. For the matrix-valued setting, it was shown by Gu, Hendricks and Rutherford (Theorem 2.2 in \cite{GHR2006}) that for $\Phi\in L^\infty(\mathbb{M}_{n})$, $\Phi$ is of bounded type if and only if $\ker H_\varphi = \Theta H^{2}(\mathbb{C}^{n})$ for some two-side inner function $\Theta$.

On the other hand, the Bram-Halmos criterion for subnormality \cite{Bram1955, conway1991} states that an operator $T\in B(\mathcal{H})$
is subnormal if and only if $\sum_{i,j} \langle T^i x_j, T^j x_i\rangle \geq 0$ for all finite collections $x_0,x_1,\cdots,x_k\in \mathcal{H}$. It
is easy to see that this is equivalent to the following positivity test:

\begin{equation}\label{sh1.1}
\left(
  \begin{array}{cccc}
    [T^{*},T] & [T^{*2},T] & \cdots & [T^{*k},T] \\
    ~[T^{*},T^2] &[T^{*2},T^2]  & \cdots & [T^{*k},T^2] \\
    \vdots & \vdots & \ddots & \vdots \\
     ~[T^{*},T^k]& [T^{*2},T^k] & \cdots & [T^{*k},T^k]\\
  \end{array}
\right)
 \geq 0 ~~~(all~~k\geq 1).
\end{equation}
The positivity condition \eqref{sh1.1} for $k=1$ is equivalent to the hyponormality of $T$, while
subnormality requires the validity of \eqref{sh1.1} for all $k\in \mathbb{N}$.  For $k \geq 1$, an operator $T$ is said to be
\textit{$k$-hyponormal} if $T$ satisfies the positivity condition \eqref{sh1.1} for a fixed $k$ \cite{Curto1990}. Thus the
Bram-Halmos criterion can be stated as: $T$ is subnormal if and only if $T$ is $k$-hyponormal for all $k\geq 1$. The notion of $k$-hyponormality has been considered by many authors with an aim at understanding the gap between hyponormality and subnormality \cite{JL2001, MP1992, CL2005, CLY2005,CL2003}. The Bram-Halmos criterion indicates
that hyponormality is generally far from subnormality. For example, in \cite{Halmos1982} (see Problem 209), it was shown that there exists a hyponormal operator whose square is not hyponormal, e.g., $S^*+2S$ ($S$ is the canonical unilateral shift), which is a trigonometric Toeplitz operator, i.e., $S^*+2S=T_{\bar{z}+2z}$ . This example addresses the gap between hyponormality and subnormality for Toeplitz operators. But there are special classes of operators for which $k$-hyponormality and subnormality are equivalent.  In \cite{CL2001} (see Example 3.1), it was shown
that there is no gap between $2$-hyponormality and subnormality for back-step extensions of
recursively generated subnormal weighted shifts. Also in \cite{CL2001}, as a partial answer, it was shown by Curto and Lee that every hyponormal Toeplitz operator with trigonometric polynomial symbol  whose square is hyponormal must be either normal or
analytic. Furthermore, in \cite{CL2003}, it was shown that this result still holds for Toeplitz operators  with  bounded type symbol.

Recently,  Curto, Hwang and Lee, et al. \cite{CHKL2014,CKL2012,CHL2012,CHL2019} are devoted to  investigating the hyponormality and the subnormality of Toeplitz operators with matrix-valued symbols. They have tried to answer a number of questions involving matrix
functions of bounded type and have made a great progress on rational functions.
However, there are still many questions that are difficult to answer. The main problem lies on the cases where $\theta$ is a singular inner function.  For example, they embarked on the gap between hyponormality and subnormality, and studied an appropriate reformulation of Halmos's Problem 5: which hyponormal block Toeplitz operators are either normal or analytic? One of their results listed as follow:
\newtheorem*{achl}{Curto-Hwang-Lee's Theorem}
\begin{achl}[Theorem 4.5 in \cite{CHL2012}]
Let $\Phi\in L^{\infty}(\mathbb{M}_{n})$ be a matrix-valued rational function. Define $\Phi_{-}:=[(I-P)\Phi]^{\ast}$, then we may write
$$\Phi_{-}=B^{\ast}\Theta, $$
where $B\in H^{2}(\mathbb{M}_{n})$ and $\Theta=\theta I_{n}$ with a finite Blaschke product $\theta$. Suppose $ B $ and $\Theta$ are coprime. If both $T_{\Phi}$ and $T_{\Phi}^{2}$  are hyponormal then $T_{\Phi}$ is either normal or analytic.
\end{achl}
 Their approach involved utilizing the zeros of the finite Blaschke product. However, this approach is not applicable to general scalar inner functions because the singular inner has no zero in $\mathbb{D}$.
Hence they were unable to decide whether this result still holds for matrix-valued bounded type symbols.

\newtheorem*{achll}{Curto-Hwang-Lee's Problem}
\begin{achll}[Problem 10.6 in \cite{CHL2019}]
Let $\Phi \in L^{\infty}(\mathbb{M}_{n})$ be such that $\Phi$ and $\Phi^{\ast}$ are of bounded type of the form $$ \Phi_{-}=\theta B^{\ast}~~(\text{coprime}),$$
where $\theta$ is an inner function in $H^{\infty}$ and $\Phi_{-}=[(I-P)\Phi]^{\ast}$. If $T_{\varphi}$ and $T_{\varphi}^{2}$ are hyponrmal, does it follow that $T_{\Phi}$ is either normal or analytic?
\end{achll}
In this paper, we are interesting in Curto-Hwang-Lee's Problem and we observe that the key point to solve this problem is to figure out the greatest common divisor of matrix-valued inner functions.
For the scalar setting, the greatest common divisor of Blaschke products is a Blaschke product generated by their common zeros, and the greatest common divisor of singular inner functions is determined by their associated singular measures.  For the matrix-valued setting, the left inner divisor of finite Blaschke products is recently studied by Curto, Hwang and Lee in \cite{CHL2022}. In section 3, we mainly investigate the greatest common divisor of matrix-valued inner functions and obtain the following result:


\newtheorem*{ashth}{Theorem \ref{shly10}}
\begin{ashth}
Suppose $\Delta_{i}, i=1,2$ are two-sided inner functions with values in  $\mathbb{M}_{n}$ without nonconstant scalar inner factors and $D(\Delta_{i})=\delta, i=1,2$, where $\delta$ is a scalar inner function. If $\theta$ is a scalar inner function and
\begin{equation}\nonumber
  \text{left-g.c.d}\{\theta I_{n}, \Delta_{1} \Delta_{2}\}=\text{left-g.c.d}\{\theta I_{n}, \delta\Delta_{1}  \}=\Omega,
\end{equation}
then $\Omega$ has the form $\omega I_{n}$ for some scalar-valued inner function $\omega$. Moreover,
$$\omega =\text{g.c.d.}\{\theta, \delta\},$$ and $\omega^{\ast} \theta$ and $\delta $ are coprime.
\end{ashth}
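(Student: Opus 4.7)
First I would verify $\omega I_n \mid \Omega$. Since $\omega \mid \theta$ and $\omega \mid \delta$, we have $\omega I_n \mid \theta I_n$ trivially and $\omega I_n \mid \delta\Delta_1$ (because $\delta\Delta_1/\omega = (\delta/\omega)\Delta_1 \in H^\infty(\mathbb{M}_n)$), so $\omega I_n$ is a common left divisor of $\{\theta I_n, \delta\Delta_1\}$ and therefore divides $\Omega$ on the left. Extracting the maximal scalar inner factor $\omega_0$ of $\Omega$ and writing $\Omega = \omega_0 \Omega_0$ with $\Omega_0$ free of scalar inner factors, the relations $\Omega \mid \theta I_n$ and $\Omega \mid \delta\Delta_1$ force $\omega_0 \mid \theta$ and $\omega_0 \mid \delta$ (because the scalar inner factor of $\delta\Delta_1$ equals $\delta$, as $\Delta_1$ has no scalar factor), hence $\omega_0 \mid \gcd(\theta,\delta) = \omega$. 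Combined with $\omega \mid \omega_0$ this gives $\omega_0 = \omega$, so $\Omega = \omega\Omega_0$.

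Setting $\theta = \omega\theta''$ and $\delta = \omega\delta'$ with $\gcd(\theta'',\delta') = 1$, the conditions on $\Omega_0$ become $\Omega_0 \mid \theta'' I_n$, $\Omega_0 \mid \delta'\Delta_1$ and $\Omega_0 \mid \Delta_1\Delta_2/\omega$ (well-defined since $\omega I_n \mid \Omega \mid \Delta_1\Delta_2$). Using the identity $\mathrm{lgcd}(\Delta_1\alpha,\Delta_1\beta) = \Delta_1\gcd(\alpha,\beta) I_n$ for scalar inner $\alpha,\beta$ (left-multiplication by an inner function commutes with $\mathrm{lgcd}$, via the subspace characterization of the left g.c.d.), together with $\Omega_0 \mid \theta''\Delta_1$ (right-multiplying $\Omega_0 B = \theta'' I_n$ by $\Delta_1$) and $\Omega_0 \mid \delta'\Delta_1$, I obtain $\Omega_0 \mid \mathrm{lgcd}(\theta''\Delta_1,\delta'\Delta_1) = \Delta_1\gcd(\theta'',\delta') I_n = \Delta_1$. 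Writing $\Delta_1 = \Omega_0 T_1$ and $\Delta_1\Delta_2/\omega = \Omega_0 T_2$ for inner $T_1,T_2$, equating these gives the key identity $T_1\Delta_2 = \omega T_2$.

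The hard step is closing the argument via a determinant computation. Taking $\det$ of $T_1\Delta_2 = \omega T_2$ with $\det T_1 = \delta/\det\Omega_0$, $\det\Delta_2 = \delta$ and $\delta = \omega\delta'$ yields $\omega^{n-2}\det\Omega_0\cdot\det T_2 = \delta'^2$. Since $\det T_2$ is inner, $\omega^{n-2}\det\Omega_0 \mid \delta'^2$, so $\det\Omega_0 \mid \delta'^2/\omega^{n-2}$; combined with $\det\Omega_0 \mid \theta''^n$ (from $\Omega_0 \mid \theta'' I_n$) and $\gcd(\theta'',\delta') = 1$ (so $\gcd(\theta''^n,\delta'^2) = 1$), I conclude $\det\Omega_0 \mid \gcd(\theta''^n,\delta'^2/\omega^{n-2}) \mid \gcd(\theta''^n,\delta'^2) = 1$, whence $\det\Omega_0$ is a unimodular constant. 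Since $\Omega_0$ has no scalar inner factor and constant determinant, its Smith normal form (with $d_1 = 1$, $d_1 \mid d_2 \mid \cdots \mid d_n$ and $\prod d_i$ constant) forces every $d_i = 1$, so $\Omega_0$ is a constant unitary matrix; hence $\Omega_0 = I_n$ and $\Omega = \omega I_n$. Finally, the coprimality of $\omega^*\theta = \theta''$ with $\delta$---equivalently $\gcd(\theta'',\omega) = 1$ via $\gcd(\theta'',\delta') = 1$---follows from $\Omega_0 = I_n = \mathrm{lgcd}(\theta'' I_n,\Delta_1)$, for any nontrivial $p \mid \gcd(\theta'',\omega)$ would, via the Smith decomposition of $\Delta_1$, furnish a nontrivial common left divisor of $\theta'' I_n$ and $\Delta_1$, contradicting $\mathrm{lgcd}(\theta'' I_n,\Delta_1) = I_n$.
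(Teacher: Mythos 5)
Your reduction to the identity $T_1\Delta_2=\omega T_2$ is essentially correct and is a genuinely different (and in places cleaner) route than the paper's: the steps $\omega I_n\mid\Omega$, the extraction $\Omega=\omega\Omega_0$ with $G(\Omega_0)=1$, and the deduction $\Omega_0\mid\text{left-g.c.d.}\{\theta''\Delta_1,\delta'\Delta_1\}=\Delta_1\gcd(\theta'',\delta')I_n=\Delta_1$ are all sound. (The paper instead factors $\Delta_1=\Delta_{1l}\Delta_{1r}$, $\Delta_2=\Delta_{2l}\Delta_{2r}$ with $\Delta_{1r}\Delta_{2l}=\omega I_n$ via Lemma \ref{shly25} and then compares the two left-g.c.d.'s through Lemmas \ref{shly4} and \ref{shly6}.)

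The determinant step, however, contains a genuine gap: you take $\det\Delta_i=\delta$, but the hypothesis only gives $D(\Delta_i)=\delta$, and for a two-sided inner function one merely has $\delta=D(\Delta)\mid\det\Delta\mid\delta^{\,n}$. Equality fails already for the paper's own example $\Delta=\mathrm{diag}(uv,u,v)$ with $u,v$ coprime, which has $D(\Delta)=uv$ and no nonconstant scalar inner factor but $\det\Delta=u^2v^2$. With the correct values $\det\Delta_i=\delta u_i$, $u_i\mid\delta^{\,n-1}$, your identity becomes $\omega^{\,n-2}\det\Omega_0\cdot\det T_2=\delta'^{\,2}u_1u_2$, and since $u_1u_2$ may carry arbitrary powers of $\omega$, the two divisibilities $\det\Omega_0\mid\theta''^{\,n}$ and $\det\Omega_0\mid\delta'^{\,2}u_1u_2/\omega^{\,n-2}$ only yield $\det\Omega_0=1$ if one already knows $\gcd(\theta'',\omega)=1$ --- which is exactly the theorem's second assertion that $\omega^{\ast}\theta$ and $\delta$ are coprime. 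So the argument is circular at its crucial point; the two conclusions are intertwined, and the paper resolves this by showing that if $\gcd(\omega^{\ast}\theta,\delta)\neq 1$ then the two left-g.c.d.'s in the hypothesis must actually differ. A secondary issue: matrix inner functions over $H^{\infty}$ do not admit a Smith normal form, so both invocations of it should be replaced --- the coprimality step by Lemma \ref{shly1} ($\theta I_n$ and $\Delta$ are left coprime iff $\theta$ and $D(\Delta)$ are coprime), and the final step by the observation that a two-sided inner function with constant determinant has constant scalar inner multiple, hence is a constant unitary.
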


If $\theta$ is a finite Blaschke product, by taking suitable $\Delta_{1},\Delta_{2}$ associated with the symbol of $T_{\Phi}$,  Curto-Hwang-Lee proved that $\omega=1$ (see \cite{CHL2012}), and then obtained that Problem 10.6 is true. However, if $\theta$ is not a finite Blaschke product, Curto-Hwang-Lee's method is not directly effective.  But now, thanks to Theorem \ref{shly10}, we can deal with the cases where $\theta$ is a singular inner function and solve the Problem 10.6.
\newtheorem*{shth}{Theorem \ref{shth2}}
\begin{shth}
Let $\Phi \in L^{\infty}(\mathbb{M}_{n})$ be such that $\Phi$ and $\Phi^{\ast}$ are of bounded type of the form
\begin{equation}\nonumber
  \Phi=\Phi_{1} \theta^{\ast}, ~(\theta I_n,\Phi_{1}~\text{are right coprime}),
\end{equation}
where $\theta$ is a scalar inner function and $\Phi_{1} \in H^{\infty}(\mathbb{M}_{n})$. If $T_{\Phi}$ and $T^{2}_{\Phi}$ are hyponormal, then $T_{\Phi}$ is either normal or analytic.
 \end{shth}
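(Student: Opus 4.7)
My strategy mirrors the rational-case argument of Curto-Hwang-Lee (Theorem~4.5 of \cite{CHL2012}), but replaces their zero-counting analysis for finite Blaschke products with Theorem~\ref{shly10}, which applies to arbitrary scalar inner functions. I would first assume that $T_{\Phi}$ is not analytic and aim to prove that $T_{\Phi}$ is normal. By the matrix Cowen-type criterion of Gu-Hendricks-Rutherford, $\Phi$ is normal and there exists $K\in H^{\infty}(\mathbb{M}_{n})$ with $\|K\|_{\infty}\leq 1$ satisfying $\Phi-K\Phi^{\ast}\in H^{\infty}(\mathbb{M}_{n})$. Substituting $\Phi=\Phi_{1}\theta^{\ast}$ and $\Phi^{\ast}=\theta\Phi_{1}^{\ast}$ and multiplying through by $\theta$, the relation becomes $\Phi_{1}-\theta^{2}K\Phi_{1}^{\ast}\in\theta H^{\infty}(\mathbb{M}_{n})$, from which, using the right coprimality $(\theta I_{n},\Phi_{1})$ together with Theorem~2.2 of \cite{GHR2006}, I would extract an inner factorization $K=\Delta_{1}K_{0}$ with $\Delta_{1}$ two-sided inner, and identify $D(\Delta_{1})$ with a scalar inner $\delta$ determined by $\Phi_{1}$ and $\theta$.

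Next, I would apply the same criterion to $T_{\Phi^{2}}$: there exists $K^{(2)}\in H^{\infty}(\mathbb{M}_{n})$, $\|K^{(2)}\|_{\infty}\leq 1$, with $\Phi^{2}-K^{(2)}(\Phi^{\ast})^{2}\in H^{\infty}(\mathbb{M}_{n})$. Exploiting the normality of $\Phi$ to commute $\Phi$ and $\Phi^\ast$ past each other, and comparing this with the twice-iterated consequence of $\Phi\equiv K\Phi^{\ast}\pmod{H^{\infty}}$, I would extract the right inner factor of $K^{(2)}$ to produce a second two-sided inner matrix $\Delta_{2}$ with $D(\Delta_{2})=\delta$. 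A careful bookkeeping of the inner factors then shows that $\Delta_{1},\Delta_{2}$ fall into precisely the configuration
\[
\text{left-g.c.d}\{\theta I_{n},\Delta_{1}\Delta_{2}\}=\text{left-g.c.d}\{\theta I_{n},\delta\Delta_{1}\}=\Omega,
\]
which is exactly the input required by Theorem~\ref{shly10}. That theorem then delivers $\Omega=\omega I_{n}$ with $\omega=\mathrm{g.c.d.}\{\theta,\delta\}$ and $\omega^{\ast}\theta$ coprime to $\delta$. The right coprimality of $(\theta I_{n},\Phi_{1})$ forces $\omega=\theta$ up to a unimodular constant, and unwinding the factorization of $K$ yields $K\Phi^{\ast}\in H^{\infty}(\mathbb{M}_{n})$ with $K$ unitary-valued almost everywhere on $\mathbb{T}$. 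Combined with normality of $\Phi$, this gives $[T_{\Phi}^{\ast},T_{\Phi}]=0$, i.e., $T_{\Phi}$ is normal.

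The main obstacle will be the middle step: precisely aligning the inner factorizations arising from the hyponormality of $T_{\Phi}$ and $T_{\Phi^{2}}$ so that Theorem~\ref{shly10} can be applied verbatim. In the finite Blaschke case of \cite{CHL2012}, this matching was carried out by tracking common zeros on $\mathbb{D}$; with a singular inner $\theta$ the combinatorial tool is unavailable, and must be replaced by uniqueness in the Beurling-Lax-Halmos theorem together with the two-sided inner structure theorem of \cite{GHR2006} to identify $\Delta_{1}$, $\Delta_{2}$, and $\delta$ in precisely the required configuration. Once this matching is in place, Theorem~\ref{shly10} produces the desired divisibility relation almost immediately, and the remainder reduces to unwinding the inner factorizations built in the first step.
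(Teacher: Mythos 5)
Your proposal names the right external ingredients (the Gu--Hendricks--Rutherford criterion, Theorem~\ref{shly10}, the analogy with the rational case of \cite{CHL2012}), but the argument connecting them is not the one that works, and at two points it is not merely vague but pointed in the wrong direction. First, you write $\Phi^{\ast}=\theta\Phi_{1}^{\ast}$ and try to manufacture the inner data from factorizations of the contractions $K$ and $K^{(2)}$. The identity $\Phi^{\ast}=\theta\Phi_{1}^{\ast}$ is true but useless: the object the proof actually needs is the Douglas--Shapiro--Shields factorization of $\Phi^{\ast}$ itself, $\Phi^{\ast}=\Phi_{2}\theta^{\ast}\Delta^{\ast}$ (right coprime), whose inner part $\theta\Delta$ strictly contains $\theta I_{n}$ because hyponormality gives $\ker H_{\Phi^{\ast}}\subset\ker H_{\Phi}$. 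The scalar function $\delta$ is $D(\Delta)$, not something ``determined by $\Phi_{1}$ and $\theta$,'' and neither $\Delta$ nor $\delta$ can be recovered from $K$. Moreover, the configuration $\text{left-g.c.d}\{\theta I_{n},\Delta_{1}\Delta_{2}\}=\text{left-g.c.d}\{\theta I_{n},\delta\Delta_{1}\}$ does not come from comparing $K$ with $K^{(2)}$; in the paper it comes from the operator identity $[T_{\Phi}^{\ast2},T_{\Phi}^{2}]T_{\theta^{2}\delta^{2}}=0$ (a consequence of $2$-hyponormality together with the analyticity of $\Phi^{2}\theta^{2}\delta^{2}$ and $\Phi^{\ast2}\theta^{2}\delta^{2}$), which is converted via \eqref{eq2} into the equality of Hankel kernels $\ker H_{\Phi\Phi^{\ast2}\theta^{2}\delta^{2}}=\ker H_{\Phi^{\ast}\Phi^{2}\theta^{2}\delta^{2}}$ and then, through Lemma~\ref{shly6} and Lemma~\ref{shth4}, into the left-g.c.d.\ identity. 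That computation is the heart of the proof and is entirely absent from your outline.

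Second, your endgame is reversed. You claim coprimality of $\theta I_{n}$ and $\Phi_{1}$ forces $\omega=\theta$, and that this yields $K$ unitary-valued. In fact the concluding argument of Lemma~\ref{shly18} requires $\omega=\text{g.c.d.}\{\theta,\delta\}\neq\theta$: only then is the model space $\mathcal{K}_{\widetilde{\omega}^{\ast}\widetilde{\theta}}$ nontrivial, so that $(I-T_{\widetilde{K}}T_{\widetilde{K}}^{\ast})\mathcal{K}_{\widetilde{\omega}^{\ast}\widetilde{\theta}}\subset\widetilde{\theta}H^{2}(\mathbb{C}^{n})$ forces $T_{\widetilde{K}}T_{\widetilde{K}}^{\ast}f=f$ on a nonzero space and the reproducing-kernel/maximum-principle argument makes $K$ a unitary constant, whence $[T_{\Phi}^{\ast},T_{\Phi}]=H_{\Phi^{\ast}}^{\ast}(I-T_{\widetilde{K}}T_{\widetilde{K}}^{\ast})H_{\Phi^{\ast}}=0$. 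If $\omega=\theta$ the space is $\{0\}$ and nothing follows --- this is exactly the obstruction recorded in the remark after Lemma~\ref{shly18}. The case where $\theta$ does divide $D(\Delta)$ is where the real work of Theorem~\ref{shth2} lies: an iteration that successively lowers the power of $\theta$ in the weight $\theta^{j}\delta^{2}$, at each stage either reaching the situation $\omega\neq\theta$ (and invoking the Lemma~\ref{shly18} endgame) or ruling out the degenerate case $D(\Delta)=\theta^{k}$ by showing it would contradict Theorem~\ref{shly10}. Your proposal contains no trace of this iteration, so the hardest part of the theorem is unaddressed.
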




The remainder of the paper is organized as follows: In section 2, we review the notions and the definitions related to the matrix-valued functions and  give some essential lemmas about minimal scalar inner multiple $D(\Theta)$.
 In section 3, we first study the connection between the minimal scalar inner multiple and the $\text{left-g.c.d.}$ and $\text{left-l.c.m.}$ of two-sided inner functions, and then give the proof of Theorem \ref{shly10}, and finally we discuss the ``weak" commutativity of two-sided inner functions.
 In section 4, by using Theorem \ref{shly10} and the ``weak" commutativity, we give an affirmative answer to Problem 10.6 in \cite{CHL2019} and the main result is Theorem \ref{shth2}.

\section{Preliminaries}

\subsection{Factorization of matrix-valued inner functions}
Let $\mathcal{P}_{n}$ be the set of all polynomials with values in $\mathbb{C}^{n}$. A matrix-valued function $f\in H^{2}(\mathbb{M}_{n})$ is said to be $outer$ if $f \mathcal{P}_{n}$ is dense in $H^{2}(\mathbb{C}^{n})$. The analogue of the scalar Inner-Outer factorization theorem on the classical Hardy space can be stated as follows:
\newtheorem*{iof}{\bf Inner-Outer Factorization for $H^{2}(\mathbb{M}_{n})$-functions}
\begin{iof}
If $f \in H^{2}(\mathbb{M}_{n})$, then $f$ can be expressed in the form
$$f=f_{i}f_{o},$$
where $f_{o}$ is an outer function with values in $\mathbb{M}_{n}$ and $f_{i}$ is an inner function with values in $\mathbb{M}_{n}$.
\end{iof}

It is worth noting that the factorization of  matrix-valued inner functions is more complicated compared to the scalar setting due to noncommutativity in matrix multiplication.

 Let $\{\theta_{i},i\in I\}$ be a family of scalar inner functions. The \textit{ greatest common inner divisor} $\theta_{d}$ and the $least ~common ~inner ~multiple$ $\theta_{m}$ of the family of $\{\theta_{i},i\in I\}$ are the inner functions defined by
$$\theta_{d}H^{2}=\bigvee_{i \in I}\theta_{i}H^{2}~\text{and}~\theta_{m}H^{2}=\bigcap_{i\in I}\theta_{i}H^{2},$$
where $\bigvee$ denotes the closed linear span on $H^{2}$.
Similarly, let $\{\Theta_{i},i\in I\}$ be a family of inner functions with values in $\mathbb{M}_{n}$. The the $greatest~ common~ left~ inner~ divisor$ $\Theta_{ld}$ and the $least~ common~ left~ inner~ multiple$ $\Theta_{lm}$ of the family $\{\Theta_{i},i\in I\}$ are the inner functions defined by
$$\Theta_{ld}H^{2}(\mathbb{C}^{n})=\bigvee_{i \in I}\Theta_{i}H^{2}(\mathbb{C}^{n})~~\text{and}~~\Theta_{lm}H^{2}(\mathbb{C}^{n})=\bigcap_{i\in I}\Theta_{i}H^{2}(\mathbb{C}^{n}).$$
The \textit{greatest common right inner divisor} $\Theta_{rd}$ of $\{\Theta_{i},i\in I\}$ defined as the greatest common left inner divisor of $\{\widetilde{\Theta_{i}},i\in I\}$ and the \textit{least common right inner multiple} $\Theta_{rm}$ of $\{\Theta_{i},i\in I\}$ defined as the least common left inner divisor of $\{\widetilde{\Theta_{i}},i\in I\}$.
The Beurling-Lax-Halmos Theorem guarantees that $\Theta_{ld}$ and $\Theta_{lm}$ are unique up to a unitary constant right factor, and $\Theta_{rd}$ and $\Theta_{rm}$ are unique up to a unitary constant left factor.

\begin{remark}\label{shre5}
For the scalar setting, the least common inner divisor always exists and is nonzero. For example, if $u_{1}$ and $u_{2}$ are scalar inner functions, then we have
$$\{0\} \neq u_{1}u_{2}H^{2}\subset u_{1}H^{2} \cap u_{2}H^{2}.  $$
 However, for the matrix-valued setting, this property does not hold (see \cite{CHL2021,ZLY}), there exist two inner functions $\Theta_{i},i=1,2$ with values in $\mathbb{M}_{m_{1},n}$ and $\mathbb{M}_{m_{2},n}$ such that
$$\Theta_{1}H^{2}(\mathbb{C}^{m_{1}}) \perp \Theta_{2}H^{2}(\mathbb{C}^{m_{2}}).$$
In other words, the invariant subspaces $\Theta_{1}H^{2}(\mathbb{C}^{m_{1}})$ and $ \Theta_{2}H^{2}(\mathbb{C}^{m_{2}})$ are orthogonal to each other.
\end{remark}

For the scalar setting,
let $\text{g.c.d.}\{\cdot\}$ and $\text{l.c.m.}\{\cdot \}$ denote the greatest common inner divisor and the least common inner multiple, respectively. For the matrix-valued setting, let $\text{left-g.c.d.}\{\cdot\}(\text{right-g.c.d.}\{\cdot\})$ and $\text{left-l.c.m.}\{\cdot\}$ $(\text{right-l.c.m.}\{\cdot\})$ denote the greatest common left(right) inner divisor and the least common left(right) inner multiple, respectively.
Two matrix-valued inner functions
$\Theta_{1}$ and $\Theta_{2}$ are said to be \textit{left coprime} if $\text{left-g.c.d.}\{\Theta_{1},\Theta_{2}\}=1$. $\Theta_{1}$ and $\Theta_{2}$ are said to be \textit{right coprime} if $\widetilde{\Theta_{1}}$ and $\widetilde{\Theta_{2}}$ are left coprime.

For a matrix-valued function $\Phi \in H^{2}(\mathbb{M}_{n \times m})$, we say that $\Theta \in H^{2}(\mathbb{M}_{n\times r})$ is a $left~ inner ~divisor$ of $\Phi$ if $\Theta$ is a matrix-valued inner function such that $\Phi=\Theta \Psi$ for some $\Psi \in H^{2}(\mathbb{M}_{r\times m})(r \leq n)$.
Two matrix-valued functions $\Phi$ and $\Psi$  in $ H^{2}(\mathbb{M}_{n})$ are said to be \textit{left coprime} if their inner parts of Inner-Outer factorization $\Phi_{i}$ and $\Psi_{i}$ are left coprime. There are some simple but important facts about the left inner divisors of two-sided inner functions:

\begin{lemma}[Lemma 2.1 in \cite{CHL2012}]
Let $\Theta_{i}=\theta_{i}I_{n}$ for an scalar inner function $\theta_{i}, i\in J$. Let $\theta_{d}=\text{g.c.d.}\{\theta_{i}:i \in J\}$ and $\theta_{m}=\text{l.c.m.}\{\theta_{i}:i \in J\}$, then
\begin{description}
  \item[(a)] $\text{left-g.c.d.}\{\Theta_{i}:i \in J\}=\text{right-g.c.d.}\{\Theta_{i}:i \in J\}=\theta_{d}I_{n}$;
  \item[(b)] $\text{left-l.c.m.}\{\Theta_{i}:i \in J\}=\text{right-l.c.m.}\{\Theta_{i}:i \in J\}=\theta_{m}I_{n}$.
\end{description}
\end{lemma}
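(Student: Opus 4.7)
The plan is to invoke the Beurling-Lax-Halmos theorem to translate the problem into one about invariant subspaces of the shift on $H^{2}(\mathbb{C}^{n})$, and then reduce to classical scalar facts. The essential observation is that for a scalar inner function $\theta$, multiplication by $\theta I_{n}$ on $H^{2}(\mathbb{C}^{n})$ acts diagonally, so that $(\theta I_{n}) H^{2}(\mathbb{C}^{n}) = \theta H^{2}\oplus\cdots\oplus\theta H^{2}$ with $n$ summands. Consequently, spans and intersections of the subspaces $\Theta_{i}H^{2}(\mathbb{C}^{n})$ decouple coordinate by coordinate into scalar spans and intersections of $\{\theta_{i}H^{2}\}_{i\in J}$.

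For part (a), by the definition of the left greatest common divisor,
\[
\text{left-g.c.d.}\{\Theta_{i}\}\cdot H^{2}(\mathbb{C}^{n}) \;=\; \bigvee_{i\in J}\Theta_{i}H^{2}(\mathbb{C}^{n}),
\]
and the coordinate decoupling together with the scalar identity $\bigvee_{i\in J}\theta_{i}H^{2}=\theta_{d}H^{2}$ (which is the very definition of $\theta_{d}$ recalled in the excerpt) shows that this span equals $\theta_{d}H^{2}(\mathbb{C}^{n})=(\theta_{d}I_{n})H^{2}(\mathbb{C}^{n})$. The uniqueness clause of Beurling-Lax-Halmos then forces $\text{left-g.c.d.}\{\Theta_{i}\}=\theta_{d}I_{n}$. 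Part (b) follows by the identical argument with intersections in place of spans, using $\bigcap_{i\in J}\theta_{i}H^{2}=\theta_{m}H^{2}$, yielding $\text{left-l.c.m.}\{\Theta_{i}\}=\theta_{m}I_{n}$.

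For the right versions, I would exploit the symmetry of scalar-times-identity functions under the tilde operation: since the identity matrix is symmetric, $\widetilde{\theta_{i}I_{n}}=\theta_{i}I_{n}$. By the definition of right-g.c.d.\ and right-l.c.m.\ as the left versions applied to the tildes, the right-hand sides coincide with the left-hand sides, giving $\text{right-g.c.d.}\{\Theta_{i}\}=\theta_{d}I_{n}$ and $\text{right-l.c.m.}\{\Theta_{i}\}=\theta_{m}I_{n}$.

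There is no substantive obstacle here; the proof is essentially mechanical once the subspace correspondence and the coordinate-wise decoupling are in place. The only mild technical points are verifying that the intersection in the scalar definition of $\theta_{m}$ matches the coordinate-wise intersection of the $\Theta_{i}H^{2}(\mathbb{C}^{n})$ (which is clear since vector membership is componentwise membership) and applying the uniqueness part of Beurling-Lax-Halmos to identify the resulting inner function with $\theta_{d}I_{n}$ (respectively $\theta_{m}I_{n}$) up to the customary unitary constant right factor.
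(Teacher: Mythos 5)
The paper itself offers no proof of this lemma --- it is quoted verbatim from \cite{CHL2012} --- so there is nothing internal to compare against; I will judge your argument on its own. Your treatment of the two left-hand identities is correct and is the standard argument: $(\theta_i I_n)H^{2}(\mathbb{C}^{n})=(\theta_i H^{2})^{\oplus n}$, closed spans and intersections of such diagonal subspaces decouple coordinatewise into the scalar spans and intersections defining $\theta_d$ and $\theta_m$, and the uniqueness clause of Beurling--Lax--Halmos identifies the representing inner function with $\theta_d I_n$ (respectively $\theta_m I_n$) up to a constant unitary right factor.

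The right-hand identities, however, rest on a false step. With the paper's convention $\widetilde{\Phi}(z)=\Phi(\overline{z})^{\ast}$ one has $\widetilde{\theta I_n}=\widetilde{\theta}\,I_n$, where $\widetilde{\theta}(z)=\overline{\theta(\overline{z})}$ is the inner function with conjugated Taylor coefficients; this is \emph{not} $\theta$ in general (a Blaschke factor with zero at $a$ is sent to one with zero at $\overline{a}$), and the symmetry of $I_n$ is beside the point. Taken literally, your computation would return $\widetilde{\theta_d}\,I_n$ rather than $\theta_d I_n$, unless one also remembers that the right-g.c.d.\ is recovered by applying the tilde \emph{again} to the left-g.c.d.\ of the tilded family. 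The conclusion is nonetheless true, and either of two short repairs works: (i) for $\Theta=\theta I_n$ left and right inner divisibility coincide outright, since $\Omega^{\ast}(\theta I_n)=(\theta I_n)\Omega^{\ast}$ for every $\Omega$, so the sets of common left and common right inner divisors of $\{\theta_i I_n\}$ are literally the same and so are their greatest elements; or (ii) $u\mapsto\widetilde{u}$ is a divisibility-preserving involution on scalar inner functions, hence $\text{g.c.d.}\{\widetilde{\theta_i}\}=\widetilde{\theta_d}$ and $\text{l.c.m.}\{\widetilde{\theta_i}\}=\widetilde{\theta_m}$, and the final tilde in the definition of the right versions returns $\theta_d I_n$ and $\theta_m I_n$. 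Only this half of your justification needs to be replaced.
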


\begin{lemma}[Lemma 2.2 in \cite{CHL2021}]\label{shly33}
If $\Theta$ is a matrix-valued two-sided inner function, then any left inner divisor of $\Theta$ is also two-sided inner.
\end{lemma}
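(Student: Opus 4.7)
The plan is to work pointwise on $\mathbb{T}$ and reduce the statement to a rank count in finite-dimensional linear algebra. By hypothesis $\Theta \in H^{\infty}(\mathbb{M}_{n})$ is two-sided inner, so for almost every $z \in \mathbb{T}$ the matrix $\Theta(z)$ is unitary; in particular $\Theta(z)$ is surjective onto $\mathbb{C}^{n}$. A left inner divisor of $\Theta$ is, by the conventions set up before the Beurling--Lax--Halmos Theorem, a matrix-valued inner function $\Omega \in H^{\infty}(\mathbb{M}_{n \times r})$ with $r \leq n$ together with some $\Psi \in H^{\infty}(\mathbb{M}_{r \times n})$ such that $\Theta = \Omega\Psi$. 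The goal is to show $r = n$ and that $\Omega(z)$ is unitary almost everywhere, which is exactly the definition of two-sided inner.

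First I would extract the available rank information at a.e.~point of $\mathbb{T}$. Since $\Omega$ is inner, $\Omega(z)^{*}\Omega(z) = I_{r}$ a.e., so $\Omega(z)\colon \mathbb{C}^{r} \to \mathbb{C}^{n}$ is an isometry and hence injective, giving column rank $r$. Next I would read off surjectivity from the factorization $\Theta(z) = \Omega(z)\Psi(z)$: the range of $\Omega(z)$ contains the range of $\Theta(z)$, which is all of $\mathbb{C}^{n}$, so $\Omega(z)$ has rank $n$. Comparing the two, $r = \operatorname{rank} \Omega(z) = n$, which already forces $\Omega$ to be a square-matrix-valued function.

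With $r = n$ established, the isometric identity $\Omega(z)^{*}\Omega(z) = I_{n}$ is now an identity between $n \times n$ matrices, and every square isometry on a finite-dimensional Hilbert space is unitary; hence $\Omega(z)\Omega(z)^{*} = I_{n}$ as well. This is precisely what it means for $\Omega$ to be two-sided inner, completing the proof.

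No genuine obstacle arises in this argument; the one bookkeeping point is that the $H^{\infty}$-identity $\Theta = \Omega\Psi$ passes to a pointwise identity of nontangential boundary values almost everywhere on $\mathbb{T}$, which is standard, after which the entire lemma is a rank count holding a.e. As an alternative I could phrase the same idea via invariant subspaces: the left divisor $\Omega$ corresponds to $\Omega H^{2}(\mathbb{C}^{r}) \supseteq \Theta H^{2}(\mathbb{C}^{n})$, and since $\Theta H^{2}(\mathbb{C}^{n})$ has full multiplicity $n$, so must its containing Beurling--Lax--Halmos subspace, again forcing $r = n$ and squareness of $\Omega$. Either route leads to the same conclusion in a few lines.
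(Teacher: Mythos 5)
Your argument is correct. The paper itself gives no proof of this lemma (it is quoted as Lemma 2.2 of \cite{CHL2021}), so there is nothing internal to compare against; your pointwise rank count is a clean, self-contained justification. The two directions fit together exactly as you say: a.e.\ on $\mathbb{T}$ the isometry condition $\Omega(z)^{*}\Omega(z)=I_{r}$ gives $\operatorname{rank}\Omega(z)=r$, while the factorization $\Theta(z)=\Omega(z)\Psi(z)$ with $\Theta(z)$ unitary forces $\operatorname{ran}\Omega(z)=\mathbb{C}^{n}$ and hence $r=n$, after which a square isometry is unitary. Two minor bookkeeping remarks: the paper's definition of a left inner divisor only requires $\Psi\in H^{2}(\mathbb{M}_{r\times n})$ rather than $H^{\infty}$, but this changes nothing since boundary values still exist a.e.\ and the identity $\Theta=\Omega\Psi$ passes to them (and in fact $\Psi=\Omega^{*}\Theta$ a.e.\ is then automatically bounded); and your alternative route via $\Omega H^{2}(\mathbb{C}^{r})\supseteq\Theta H^{2}(\mathbb{C}^{n})$ and multiplicity in the Beurling--Lax--Halmos Theorem is equally valid and closer in spirit to how such statements are usually handled in the operator-valued setting of \cite{CHL2021}.
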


Let $\Theta$ be a matrix-valued inner function and $\theta$ be a scalar inner function, for convenience, we will say that $\theta$ is a \textit{scalar inner factor} of $\Theta$ if $\theta I_{n}$ is a left inner divisor of $\Theta$, and that $\Theta$ has \textit{no nonconstant scalar inner factors} if $\Theta$ has no left inner divisor of the form $\theta I_{n}$. Define
$$G(\Theta)=\text{l.c.m.}\{\theta:\theta~\text{is a scalar inner factor of}~\Theta\}.$$
We call $G(\Theta)$ the \textit{greatest scalar inner factor} of $\Theta$. It is clear that $G(\Theta)=1$ if and only if $\Theta$ has no nonconstant scalar inner factors.

\subsection{Matrix-valued functions of bounded type}
For $\Phi \in L^{\infty}(\mathbb{M}_{n})$, define $\widetilde{\Phi}:=\Phi(\overline{z})^{\ast}$. The following relations can be easily derived:
\begin{align}
   \label{eq1} &T^{\ast}_{\Phi}=T_{\Phi^{\ast}},~~H^{\ast}_{\Phi}=H_{\widetilde{\Phi}}.\\
        \label{eq2} &T_{\Phi \Psi}-T_{\Phi}T_{\Psi}=H^{\ast}_{\Phi^{\ast}}H_{\Psi},~\Psi \in L^{\infty}(\mathbb{M}_{n}).\\
        \label{eq3} &H_{\Phi}T_{\Psi}=H_{\Phi \Psi}, ~\Psi \in H^{\infty}(\mathbb{M}_{n}).\\
        \label{eq4} &T^{\ast}_{\widetilde{\Psi}}H_{\Phi}= H_{\Psi \Phi}, ~\Psi \in H^{\infty}(\mathbb{M}_{n}).
\end{align}

Let $\Psi=zI_{n}$, observe by $(\ref{eq3})$ and $(\ref{eq4})$, we have $H_{\Phi}S=H_{\Phi}T_{zI_{n}}=T^{\ast}_{zI_{n}} H_{\Phi}$. It implies that the kernel of a block Hankel operator $H_{\Phi}$ is an invariant subspace of shifts. Thus by Beurling-Lax-Halmos Theorem, if $\ker H_{\Phi}\neq \{0\}$, then $$\ker H_{\Phi} =\Theta H^{2}(\mathbb{C}^{m})$$
for some matrix-valued inner function $\Theta$.

By Theorem 2.2 in \cite{GHR2006} (also Corollary 2.17 in \cite{CHL2021}), if $\Phi \in L^{\infty}(\mathbb{M}_{n}) $, then
 $$ \Phi ~\text{is of bounded type } ~\Leftrightarrow ~ \ker H_{\Phi}=\Theta H^{2}(\mathbb{C}^{n})~ \text{for a two-sided inner function}~\Theta.$$
For the scalar setting, Abrahamse (Lemma 6 in \cite{Abrahamse1976}) showed that if $T_{\Phi}$ is hyponormal, then $\Phi$ is of bounded type if and only if $\overline{\Phi}$ is of bounded type. However, for the matrix-valued setting, the bounded type-ness of $\Phi, \Phi^{\ast}$ is not equivalent even though $T_{\Phi}$ is hyponormal. But we have one-way implication: If $T_{\Phi}$ is hyponormal and $\Phi^{\ast}$ is of bounded type, then $\Phi$ is of bounded type (Corollary 3.5 and Remark 3.6 in \cite{GHR2006}). Thus
whenever we deal with hyponormal Toeplitz operators $T_{\Phi}$ with symbols $\Phi$ satisfying that both
$\Phi$ and $\Phi^{\ast}$ are of bounded type, it suffices to assume
that only $\Phi^{\ast}$ is of bounded type. In spite of this, for convenience, we will assume that $\Phi$ and $\Phi^{\ast}$
are of bounded type whenever we deal with bounded type symbols.

To understand matrix functions of bounded type, we need to factor those functions into a coprime product of matrix inner functions and the adjoints of matrix-valued $H^{\infty}$-functions; this is the so-called Douglas-Shapiro-Shields factorization (\cite{DSS1970,Fuh1981}). This
factorization is very helpful and somewhat unavoidable for the study of Hankel
and Toeplitz operators with bounded type symbols.
\newtheorem*{dss}{\bf Douglas-Shapiro-Shields factorization}
\begin{dss}
A function $\Phi \in L^{\infty}(\mathbb{M}_{n})$ is of bounded type, the there exists a two-sided inner function $\Theta$ and $\Psi \in H^{\infty}(\mathbb{M}_{n})$ such that
\begin{equation}\label{shf78}
  \Phi= \Psi\Theta^{\ast},
\end{equation}
where $ \Theta $ and $\Psi$ are right coprime. We often say that $(\ref{shf78})$ is the \textit{Douglas-Shapiro-Shields factorization} of $\Phi$. $\Theta$ is called the $inner~part$ of this factorization.
\end{dss}

\subsection{The minimal scalar inner multiple $D(\Theta)$}
We introduce a notion of minimal scalar inner multiple (Definition 4.11 in \cite{CHL2019}).

\begin{definition}\label{sdef1}
Suppose $\Theta$ is a two-sided inner function with values in $\mathbb{M}_{n}$. We say that $\Theta$ has a \textit{scalar inner multiple} $\theta$ if there exists $\Delta \in H^{\infty}(\mathbb{M}_{n})$ such that
   $$\Theta \Delta=\Delta\Theta =\theta I_{n}.$$
   Define
$$D(\Theta)=\text{g.c.d.}\{\theta : \theta ~ \text{is a scalar inner multiple of}~ \Theta \}.$$
We call $D(\Theta)$ the \textit{minimal scalar inner multiple} of $\Theta$. In particular, $D(\Theta)$ is a scalar inner multiple of $\Theta$, hence there exists two-sided inner functions $\Theta'$ such that $\Theta \Theta'=D(\Theta) I_{n}$.
\end{definition}
\newtheorem*{arem}{\it Remark}
\begin{arem}
In above definition, it is enough to assume $\Theta \Delta=\theta I_{n}$. For, given two matrices $A$ and $B$ such that $AB = \lambda I$ with $\lambda \neq 0$, it is straightforward to verify
that $BA = AB$.
It is also well known (Proposition 5.6.1 in \cite{NF}) that if $\Theta \in H^{\infty}(\mathbb{M}_{n})$ is inner then $\Theta$ has a scalar inner multiple. Thus $D(\Theta)$ is well-defined.
We would like to
remark that the notion of $D(\Theta)$ arises in the Sz.-Nagy-Foia\c{s} theory of contraction
operators $T$ of class $C_0$ (completely nonunitary contractions $T$ for which there exists
a nonzero function $u \in H^{\infty}$ such that $u(T ) = 0$): the minimal function $m_T$ of the
$C_0$-contraction operator $T$ amounts to $D(\Theta_{T})$, where $\Theta_{T}$ is the characteristic
function of $T$.
\end{arem}
The followings are some useful results about $D(\Theta)$ which can be found in \cite{CHL2012,CHL2019,CHL2021}.
\begin{lemma}[Theorem 4.13 in \cite{CHL2019}, Lemma C.13 in \cite{CHL2021}]\label{shly1}
Let $\Delta \in H^{\infty}(\mathbb{M}_{n})$ be a two-sided inner function and $\theta$ be a scalar inner function. Then the following are equivalent:
\begin{description}
  \item[(a)] $\theta I_{n}$ and $\Delta$ are left coprime;
  \item[(b)] $\theta I_{n}$ and $\Delta$ are right coprime;
  \item[(c)] $\theta $ and $D(\Delta)$ are  coprime;
  \item[(d)] $\theta$ and $\det \Delta$ are  coprime.
\end{description}

\end{lemma}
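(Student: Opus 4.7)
The plan is to establish the four-way equivalence by chaining through (c), proving (c) $\Leftrightarrow$ (d), (c) $\Rightarrow$ (a), (a) $\Rightarrow$ (c), and (a) $\Leftrightarrow$ (b) separately. The equivalence (c) $\Leftrightarrow$ (d) follows from the divisibility chain $D(\Delta) \mid \det\Delta \mid D(\Delta)^n$: the left divisibility comes from the adjugate identity $\Delta\cdot \operatorname{adj}\Delta = \det\Delta \cdot I_n$ (which exhibits $\det\Delta$ itself as a scalar inner multiple of $\Delta$), and the right divisibility follows by taking determinants in $\Delta\Delta' = D(\Delta) I_n$. Thus $D(\Delta)$ and $\det\Delta$ share the same prime inner factors, and coprimality of $\theta$ with one is equivalent to coprimality with the other.

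For the easy direction (c) $\Rightarrow$ (a), I would set $\Omega = \text{left-g.c.d.}\{\theta I_n, \Delta\}$, which is two-sided inner by Lemma \ref{shly33}. Since $\Omega$ left-divides $\theta I_n$, the scalar $\theta$ is a scalar inner multiple of $\Omega$, so $D(\Omega) \mid \theta$. Since $\Omega$ left-divides $\Delta$, writing $\Delta = \Omega Y$ gives $\Omega(Y\Delta') = \Delta\Delta' = D(\Delta) I_n$, whence $D(\Omega) \mid D(\Delta)$. Therefore $D(\Omega) \mid \gcd\{\theta, D(\Delta)\} = 1$, forcing $\Omega$ to be a unitary constant. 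For (a) $\Leftrightarrow$ (b), I would use the involution $\Delta \mapsto \widetilde\Delta$ with $\widetilde\Delta(z) = \Delta(\bar z)^*$: tilde reverses the order of matrix products but fixes scalar factors, so applying it to $\Delta\Delta' = D(\Delta) I_n$ and arguing by minimality on both sides yields $D(\widetilde\Delta) = \widetilde{D(\Delta)}$. Since right coprimality of $\theta I_n$ and $\Delta$ is by definition left coprimality of $\widetilde\theta I_n$ and $\widetilde\Delta$, the equivalence (a) $\Leftrightarrow$ (b) reduces via (a) $\Leftrightarrow$ (c) to the tautology that $\widetilde\theta, \widetilde{D(\Delta)}$ are coprime iff $\theta, D(\Delta)$ are.

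The main obstacle is (a) $\Rightarrow$ (c), which I would attack by contraposition on the model space $H(\Delta) = H^2(\mathbb{C}^n) \ominus \Delta H^2(\mathbb{C}^n)$ via the compressed shift $S_\Delta = P_{H(\Delta)} M_z|_{H(\Delta)}$. Assume $g := \gcd\{\theta, D(\Delta)\}$ is nonconstant and write $D(\Delta) = gh$. The key operator identity is $D(\Delta)(S_\Delta) = 0$, because for $u \in H(\Delta)$, $D(\Delta) u = \Delta\Delta' u \in \Delta H^2(\mathbb{C}^n)$ is annihilated by $P_{H(\Delta)}$. Hence $g(S_\Delta)\cdot h(S_\Delta) = 0$, giving $\operatorname{Im} g(S_\Delta) \subseteq \ker h(S_\Delta)$. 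The crucial elementary step is to verify $h(S_\Delta) \neq 0$: if it vanished, then $hH^2(\mathbb{C}^n) \subseteq \Delta H^2(\mathbb{C}^n)$, forcing $h\Delta^* \in H^\infty(\mathbb{M}_n)$ and exhibiting $hI_n$ as a scalar inner multiple of $\Delta$; this would require $D(\Delta) = gh \mid h$ and hence $g$ to be a unit, contradicting our hypothesis. Therefore $\ker h(S_\Delta) \subsetneq H(\Delta)$, so $\overline{\operatorname{Im} g(S_\Delta)} \subsetneq H(\Delta)$ and $\ker g(S_\Delta)^* \neq \{0\}$. A short verification identifies this kernel with $H(\Delta) \cap (H(g) \otimes \mathbb{C}^n)$, and since $g \mid \theta$ gives $H(g) \otimes \mathbb{C}^n \subseteq H(\theta) \otimes \mathbb{C}^n$, we obtain a nonzero element of $H(\Delta) \cap (H(\theta) \otimes \mathbb{C}^n) = (\theta H^2(\mathbb{C}^n) + \Delta H^2(\mathbb{C}^n))^\perp$, contradicting left coprimality. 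Bundling $D(\Delta)(S_\Delta) = 0$ with the fact that no proper inner factor of $D(\Delta)$ can annihilate $S_\Delta$ supplies a clean elementary replacement for the full Sz.-Nagy-Foia\c{s} $C_0$-contraction machinery one would otherwise invoke.
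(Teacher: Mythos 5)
The paper does not prove this lemma at all: it imports it verbatim from Theorem 4.13 of \cite{CHL2019} and Lemma C.13 of \cite{CHL2021}, so there is no in-paper argument to match yours against. Your blind proof is, as far as I can check, correct and self-contained, and it is organized quite differently from the way the cited sources (and the paper's later usage) treat the result. Your reductions are all sound: (c)$\Leftrightarrow$(d) via the two-sided divisibility $D(\Delta)\mid\det\Delta\mid D(\Delta)^n$ coming from $\Delta\cdot\mathrm{adj}(\Delta)=\det\Delta\cdot I_n$ and from taking determinants in $\Delta\Delta'=D(\Delta)I_n$ (this is exactly the divisibility the paper itself re-derives inside the proof of Proposition 3.1(d)); (c)$\Rightarrow$(a) by showing $D(\Omega)$ divides both $\theta$ and $D(\Delta)$ for $\Omega=\text{left-g.c.d.}\{\theta I_n,\Delta\}$; and (a)$\Leftrightarrow$(b) via $D(\widetilde\Delta)=\widetilde{D(\Delta)}$, which the paper also quotes (Lemma 4.12 of \cite{CHL2019}) in the proof of its Lemma 3.3. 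The genuinely distinctive part is your (a)$\Rightarrow$(c): instead of invoking the Sz.-Nagy--Foia\c{s} $C_0$-theory that underlies the cited references, you run an elementary contrapositive on the model space $H(\Delta)$, using that $\varphi\mapsto\varphi(S_\Delta)=P_{H(\Delta)}M_\varphi|_{H(\Delta)}$ is multiplicative (a two-line computation), that $D(\Delta)(S_\Delta)=0$, that $h(S_\Delta)=0$ would force $h$ to be a scalar inner multiple of $\Delta$ and hence $D(\Delta)\mid h$, and finally the identification $\ker g(S_\Delta)^\ast=H(\Delta)\cap\bigl(H(g)\otimes\mathbb{C}^n\bigr)\subseteq\bigl(\theta H^2(\mathbb{C}^n)\vee\Delta H^2(\mathbb{C}^n)\bigr)^{\perp}$, which contradicts left coprimality. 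Each of these steps checks out (the kernel identification follows from $v\perp gH(\Delta)$ and $v\perp g\Delta H^2(\mathbb{C}^n)$ forcing $v\perp gH^2(\mathbb{C}^n)$). What your route buys is a proof readable without the minimal-function machinery for $C_0$-contractions; what it costs is length, since the references obtain (a)$\Leftrightarrow$(c) almost immediately from properties of $m_T=D(\Theta_T)$. The only cosmetic looseness is the phrase ``share the same prime inner factors'' in (c)$\Leftrightarrow$(d): for singular inner parts one should argue via mutual singularity of the associated measures that $\gcd\{\theta,D\}=1$ implies $\gcd\{\theta,D^n\}=1$, but that fact is standard and your conclusion stands.
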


\begin{lemma}\label{shly2}
Suppose  $\Theta $ and $ \Delta$ are two-sided inner functions with values in $\mathbb{M}_{n}$, then the following statements hold:
\begin{description}
  \item[(a)] $D(\Theta)$ and $D(\Delta)$ are the factors of $D(\Theta \Delta)$;
  \item[(b)] $D(\Theta \Delta)$ is the factor of $D(\Theta)D(\Delta)$;
  \item[(c)] if $D(\Theta)$ and $D(\Delta)$ are coprime, then $D(\Theta \Delta)=D(\Theta)D(\Delta)$.
\end{description}

\end{lemma}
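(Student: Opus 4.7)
The plan is to exploit the identity in the remark following Definition~\ref{sdef1}: whenever $AB=\lambda I_n$ with $\lambda\neq 0$ (here a scalar inner function times $I_n$), we automatically have $BA=AB$. This lets us freely rearrange factors inside a product that collapses to a scalar.

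For part~(b), I would proceed directly. By definition of the minimal scalar inner multiple, there exist two-sided inner functions $\Theta',\Delta'\in H^{\infty}(\mathbb{M}_n)$ with $\Theta\Theta'=D(\Theta)I_n$ and $\Delta\Delta'=D(\Delta)I_n$. Then compute
\begin{equation*}
(\Theta\Delta)(\Delta'\Theta')=\Theta\bigl(D(\Delta)I_n\bigr)\Theta'=D(\Delta)\,\Theta\Theta'=D(\Theta)D(\Delta)\,I_n,
\end{equation*}
using that scalar inner functions commute with matrix factors. Since $\Delta'\Theta'\in H^\infty(\mathbb{M}_n)$, the product $D(\Theta)D(\Delta)$ is a scalar inner multiple of $\Theta\Delta$, so by minimality $D(\Theta\Delta)$ divides $D(\Theta)D(\Delta)$.

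For part~(a), set $\theta:=D(\Theta\Delta)$ and choose $\Xi\in H^\infty(\mathbb{M}_n)$ with $(\Theta\Delta)\Xi=\theta I_n$. The remark after Definition~\ref{sdef1} then gives $\Xi(\Theta\Delta)=\theta I_n$ as well. Rebracketing,
\begin{equation*}
\Theta(\Delta\Xi)=\theta I_n\qquad\text{and}\qquad(\Xi\Theta)\Delta=\theta I_n,
\end{equation*}
with $\Delta\Xi,\,\Xi\Theta\in H^\infty(\mathbb{M}_n)$. Thus $\theta$ is a scalar inner multiple of both $\Theta$ and $\Delta$, so $D(\Theta)\mid\theta$ and $D(\Delta)\mid\theta$ by minimality, which is exactly the conclusion of~(a).

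Part~(c) then follows formally: from~(a), both $D(\Theta)$ and $D(\Delta)$ divide $D(\Theta\Delta)$, and since by hypothesis they are coprime as scalar inner functions, their product $D(\Theta)D(\Delta)$ also divides $D(\Theta\Delta)$; combined with the reverse divisibility in~(b) we get $D(\Theta\Delta)=D(\Theta)D(\Delta)$ up to a unimodular constant, which is the convention used throughout the paper. I do not expect any genuine obstacle in this lemma: the only subtlety is the use of the commutation $\Xi(\Theta\Delta)=(\Theta\Delta)\Xi$ coming from the remark, without which one would not immediately obtain a scalar multiple of $\Delta$ from the factorization of $\theta I_n$.
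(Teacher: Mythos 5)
Your proposal is correct and follows essentially the same route as the paper: part (a) via the commutation $\Xi(\Theta\Delta)=(\Theta\Delta)\Xi$ from the remark after Definition~\ref{sdef1} and rebracketing, part (b) by the explicit factorization $(\Theta\Delta)(\Delta'\Theta')=D(\Theta)D(\Delta)I_n$, and part (c) from the two divisibility relations plus coprimality. The only cosmetic difference is that in (c) the paper identifies the quotient inner functions $\theta,\delta$ explicitly while you invoke the standard fact that coprime divisors have their product as a divisor; these are equivalent.
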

\begin{proof}
By definition \ref{sdef1}, there exists a two-sided inner function $\Omega$ such that
$$D(\Theta \Delta)I_{n}=\Theta \Delta \Omega= \Delta \Omega\Theta,$$
which implies that statement $(a)$ holds.

Similarly,
there exist two-sided inner functions $\Theta_{1}, \Delta_{1}$ such that
$$D(\Theta)I_{n}=\Theta \Theta_{1},~~D(\Delta)I_{n}=\Delta \Delta_{1}.$$
Therefore,
$$\Theta \Delta \Delta_{1} \Theta_{1}=D(\Theta)D(\Delta)I_{n},$$
which yields that (b) holds.

For (c), using statement (a) and (b), there exist scalar inner functions $\theta, \delta$ which are divisors of $D(\Theta)$ and $D(\Delta)$, respectively, such that
$$D(\Theta \Delta)=D(\Theta) \delta=D(\Delta) \theta.$$
Since $D(\Theta)$ and $D(\Delta)$ are coprime, then $D(\Theta )=\theta$ and $D(\Delta)=\delta$. 
\end{proof}

\begin{lemma}\label{shly7}
Suppose $\Theta$ is a two-sided inner functions with values in $\mathbb{M}_{n}$. Write $D(\Theta) I_{n}=\Theta \Theta'$, then
$\Theta'$ has no nonconstant scalar inner factors, i.e. $G(\Theta)=1$.
\end{lemma}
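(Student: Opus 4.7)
The plan is to proceed by contradiction, exploiting the minimality of $D(\Theta)$ directly against any hypothetical nonconstant scalar inner left-factor of $\Theta'$.

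Suppose, for contradiction, that $\Theta'$ has a nonconstant scalar inner factor $\omega$, meaning $\omega I_{n}$ is a left inner divisor of $\Theta'$. Then I would write $\Theta' = \omega\Theta''$ for some matrix-valued inner function $\Theta''$. Substituting into the defining relation of $D(\Theta)$ gives
\begin{equation*}
D(\Theta)\,I_{n} = \Theta\Theta' = \Theta(\omega\Theta'') = \omega\,\Theta\Theta'',
\end{equation*}
where I used that $\omega$ is scalar and hence commutes with $\Theta$.

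The next step is to show that $\omega$ divides $D(\Theta)$ as scalar inner functions, so that the quotient is still an $H^{\infty}$-inner function. Reading off the entries of the matrix identity $\omega\,\Theta\Theta'' = D(\Theta)\,I_{n}$: the off-diagonal entries of $\Theta\Theta''$ vanish, and the diagonal entries satisfy $\omega\cdot(\Theta\Theta'')_{ii} = D(\Theta)$. Since $(\Theta\Theta'')_{ii}\in H^{\infty}$, the scalar inner function $\omega$ divides $D(\Theta)$; set $\eta := D(\Theta)/\omega \in H^{\infty}$, which is inner because both $\omega$ and $D(\Theta)$ are. Then
\begin{equation*}
\Theta\Theta'' = \eta\,I_{n},
\end{equation*}
so $\eta$ is a scalar inner multiple of $\Theta$ in the sense of Definition \ref{sdef1}. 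However, since $\omega$ is nonconstant, $\eta$ properly divides $D(\Theta)$, contradicting the minimality clause in the definition of $D(\Theta)$. Hence $\Theta'$ can have no nonconstant scalar inner factor, which is the desired conclusion $G(\Theta')=1$.

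The argument is short and the main conceptual point is the commutation step $\Theta\omega = \omega\Theta$, which reduces the matrix factorization question to a divisibility statement between scalar inner functions; no serious obstacle arises beyond noting that the diagonal reading of the matrix identity forces $\omega \mid D(\Theta)$ so the quotient $\eta$ lives in $H^{\infty}$ and can legitimately be compared with $D(\Theta)$ under the g.c.d.\ ordering.
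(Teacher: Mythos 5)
Your proof is correct and follows essentially the same route as the paper: factor out the hypothetical scalar inner divisor, commute it past $\Theta$, and contradict the minimality of $D(\Theta)$. The only difference is that you explicitly justify (via the diagonal entries) that $\omega$ divides $D(\Theta)$ so that $\eta=\omega^{\ast}D(\Theta)$ is a genuine inner function, a point the paper's proof leaves implicit; also note the lemma's stated conclusion ``$G(\Theta)=1$'' is a typo for $G(\Theta')=1$, which you correctly proved.
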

\begin{proof}
Suppose $\Theta'=\delta \Delta_{1}$, where $\delta$ is a nonconstant scalar inner function, then
$$D(\Theta)I_{n}=\Theta \Theta'= \delta \Theta \Delta_{1} \Rightarrow \delta^{\ast}D(\Theta)I_{n}=\Theta \Delta_{1}.$$
This implies that $\delta^{\ast}D(\Theta)$ is a scalar inner multiple of $\Theta$, which contradicts to the minimality of $D(\Theta)$.
\end{proof}
%

\begin{lemma}\label{shly9}
Suppose $\Theta$ and $ \Delta$ are two-sided inner functions with values in  $\mathbb{M}_{n}$ and $\Theta\Delta=\theta I_{n}$ for some scalar inner function $\theta$. Then the following statements hold:
\begin{enumerate}
  \item $D(\Theta)=D(\Delta)$ if and only if $G(\Theta)=G(\Delta)$;
  \item if $\Theta$~(or $\Delta$)  has no nonconstant scalar inner factors, i.e. $G(\Theta)=1$~(or $G(\Delta)=1$),  then
  $$\Theta\Delta=D(\Delta)I_{n}~~(~\text{or } ~~\Theta\Delta= D(\Theta)I_{n}).$$
\end{enumerate}
\end{lemma}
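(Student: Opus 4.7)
The plan is to establish part~(2) first and then deduce part~(1) from it.

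For part~(2), suppose $G(\Theta)=1$. The identity $\Theta\Delta=\theta I_n$ shows that $\theta$ is a scalar inner multiple of $\Delta$, so by Definition \ref{sdef1} we have $D(\Delta)\mid\theta$; write $\theta=D(\Delta)\gamma$ for a scalar inner function $\gamma$, and let $\Delta'$ be the two-sided inner function with $\Delta\Delta'=\Delta'\Delta=D(\Delta)I_n$. Then
\begin{equation*}
\Theta\Delta=\theta I_n=\gamma D(\Delta) I_n=\gamma\Delta'\Delta.
\end{equation*}
Since $\Delta(z)$ is unitary (hence invertible) for a.e.\ $z\in\mathbb{T}$, cancellation on the right gives $\Theta=\gamma\Delta'$, exhibiting $\gamma I_n$ as a left inner divisor of $\Theta$. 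The hypothesis $G(\Theta)=1$ forces $\gamma$ to be constant, so $\theta=D(\Delta)$ and $\Theta\Delta=D(\Delta)I_n$. The symmetric argument starting from $G(\Delta)=1$ yields $\Theta\Delta=D(\Theta)I_n$.

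For part~(1), set $g_1:=G(\Theta)$ and $g_2:=G(\Delta)$ and factor $\Theta=g_1\Theta_1$, $\Delta=g_2\Delta_1$. By Lemma \ref{shly33} the cofactors $\Theta_1,\Delta_1$ are again two-sided inner, and the maximality of $G$ forces $G(\Theta_1)=G(\Delta_1)=1$. Substituting into the hypothesis yields $g_1g_2\,\Theta_1\Delta_1=\theta I_n$, and inspecting a single diagonal entry shows that $g_1g_2\mid\theta$ and $\Theta_1\Delta_1=\theta' I_n$ with $\theta':=\theta/(g_1g_2)$ a scalar inner function. Applying part~(2) to the pair $(\Theta_1,\Delta_1)$ (via either $G(\Theta_1)=1$ or $G(\Delta_1)=1$) gives $\theta'=D(\Theta_1)=D(\Delta_1)$.

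It remains to verify $D(\Theta)=g_1D(\Theta_1)$ (and the analogous $D(\Delta)=g_2D(\Delta_1)$); granting this, $D(\Theta)=g_1\theta'$ and $D(\Delta)=g_2\theta'$, whence $D(\Theta)=D(\Delta)$ is equivalent to $g_1=g_2$, i.e.\ to $G(\Theta)=G(\Delta)$. For $D(\Theta)\mid g_1D(\Theta_1)$, pick $\Theta_1'$ with $\Theta_1\Theta_1'=D(\Theta_1)I_n$; then $\Theta\Theta_1'=g_1D(\Theta_1)I_n$. For the reverse direction, if $\Theta\Theta_0=D(\Theta)I_n$ then $g_1\Theta_1\Theta_0=D(\Theta)I_n$, which on any diagonal entry forces $g_1\mid D(\Theta)$; hence $\Theta_1\Theta_0=(D(\Theta)/g_1)I_n$, and the minimality of $D(\Theta_1)$ yields $g_1D(\Theta_1)\mid D(\Theta)$. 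The main technical point is the cancellation step in part~(2), which leverages that a two-sided inner function is pointwise unitary on $\mathbb{T}$; the rest is bookkeeping built on Definition \ref{sdef1}, Lemma \ref{shly33}, and the minimality property of $D(\cdot)$.
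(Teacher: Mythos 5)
Your proof is correct. Part (2) is essentially the paper's own argument: both derive $\Theta=\gamma\Delta'$ by right-cancelling the pointwise-unitary $\Delta$ from $\Theta\Delta=\gamma\Delta'\Delta$ and then use $G(\Theta)=1$ to kill the scalar factor $\gamma$ (your $\gamma$ is the paper's $\omega_2$). For part (1) you take a genuinely different, and arguably cleaner, route. The paper proves both directions of (1) in parallel from the pair of identities $\Theta\Delta=D(\Theta)\omega_1 I_n=D(\Delta)\omega_2 I_n$ and $\Theta=\Delta'\omega_2$, $\Delta=\Theta'\omega_1$, and it must invoke Lemma \ref{shly7} (the $D$-cofactors $\Theta',\Delta'$ have no nonconstant scalar inner factors) to identify $G(\Theta)=\omega_2$ and $G(\Delta)=\omega_1$ on the nose. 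You instead strip off $g_i=G(\cdot)$ first, reduce to the scalar-factor-free case where part (2) applies, and then prove the multiplicativity $D(g_1\Theta_1)=g_1D(\Theta_1)$ by hand via the two divisibility inequalities; this makes (1) a corollary of (2) and bypasses Lemma \ref{shly7} entirely. The cost is the small inline lemma on $D(g\Theta_1)$, which you verify correctly (both the diagonal-entry argument showing $g_1\mid D(\Theta)$ and the appeal to minimality of $D(\Theta_1)$ are sound); the benefit is a more self-contained and modular argument. One implicit point worth being aware of in both your proof and the paper's: the factorization $\Theta=G(\Theta)\Theta_1$ with $G(\Theta_1)=1$ presumes that the least common multiple defining $G(\Theta)$ is itself a scalar inner factor of $\Theta$, which holds because all scalar inner factors divide $D(\Theta)$ and the corresponding shift-invariant subspaces intersect in $\text{l.c.m.}\cdot H^2(\mathbb{C}^n)$.
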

\begin{proof}

It is clear that $D(\Theta)$ and $D(\Delta)$ are the factors of $\Theta\Delta$. Hence there exist scalar inner functions $\omega_{1}$ and $\omega_{2}$ such that
\begin{equation}\label{shf12}
  \Theta \Delta=D(\Theta)\omega_{1}I_{n}=D(\Delta) \omega_{2}I_{n}.
\end{equation}

Write
\begin{equation}\nonumber
  D(\Theta)I_{n} =\Theta \Theta',~~D(\Delta)I_{n}=\Delta \Delta',
\end{equation}
 by $(\ref{shf12})$, we have
 \begin{equation}\label{shf79}
   \Theta=\Delta' \omega_{2},~~\Delta=\Theta' \omega_{1}.
 \end{equation}
According to Lemma \ref{shly7}, $\Theta'$ and $\Delta'$ have no nonconstant scalar inner factors, i.e., $G(\Theta')=G(\Delta')=1$.

For the necessity of (1), since $D(\Theta)=D(\Delta)$, $(\ref{shf12})$ implies that $\omega_{1}=\omega_{2}$. Thus
$$G(\Theta)=G(\Delta' \omega_{2} )=\omega_{2}=\omega_{1}=G( \Theta' \omega_{1})=G(\Delta).$$

For the sufficiency of (1), assume $G(\Theta)=G(\Theta)=\omega$, then there exist $\Theta_{1}$ and $\Delta_{1}$ such that
$$\Theta=\omega\Theta_{1},~\Delta=\omega \Delta_{1},$$
where $G(\Theta_{1})= G(\Delta_{1})=1$. By $(\ref{shf79})$,
$$\omega\Delta_{1}=\omega_{1}\Theta',~\omega\Theta_{1}=\omega_{2}\Delta'.$$
Since $\Theta', \Delta', \Theta_{1}$ and $\Delta_{1}$ have no nonconstant scalar factors, then $$\omega=\omega_{1}=\omega_{2}.$$
By $(\ref{shf12})$, we can see that $D(\Theta)=D(\Delta)$.

For (2),
by $(\ref{shf79})$, we have
$$\Delta=\omega_{1} \Theta',~ \Theta= \omega_{2} \Delta'.$$
Since $\Theta$  has no nonconstant scalar inner divisors, then $\omega_{2}=1$. By $(\ref{shf12})$ again, $D(\Delta)I_{n}=\Theta\Delta$. The similar argument holds for $\Delta$.
\end{proof}

\begin{lemma}\label{shly27}
Suppose $\Theta, \Delta$ are two-sided inner functions with values in  $\mathbb{M}_{n}$ without nonconstant scalar inner factors. Let $D(\Theta)I_{n}=\Theta \Theta_{1}$ and $D(\Delta)I_{n}=\Delta \Delta_{1}$, then $$D(\Theta \Delta)=D(\Theta) D(\Delta) $$ if and only if $\Delta_{1} \Theta_{1}$ have no nonconstant scalar inner factors.
\end{lemma}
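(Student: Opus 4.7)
The plan is to reduce the statement to a direct application of Lemmas \ref{shly7} and \ref{shly9} via the identity $(\Theta\Delta)(\Delta_1\Theta_1) = D(\Theta)D(\Delta)\, I_n$, which exhibits $\Delta_1\Theta_1$ as a two-sided-inner complement of $\Theta\Delta$ producing the scalar inner multiple $D(\Theta)D(\Delta)$.

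First I would verify this identity. Since scalar inner functions lie in the center of $H^\infty(\mathbb{M}_n)$, and using $\Theta\Theta_1 = D(\Theta) I_n$ together with $\Delta\Delta_1 = D(\Delta) I_n$, one computes
\begin{equation*}
(\Theta\Delta)(\Delta_1\Theta_1) \;=\; \Theta\,(\Delta\Delta_1)\,\Theta_1 \;=\; \Theta\, D(\Delta)\, \Theta_1 \;=\; D(\Delta)\,\Theta\Theta_1 \;=\; D(\Theta)D(\Delta)\, I_n.
\end{equation*}
Both $\Theta\Delta$ and $\Delta_1\Theta_1$ are two-sided inner (as products of two-sided inner functions), and the display shows that $D(\Theta)D(\Delta)$ is a scalar inner multiple of $\Theta\Delta$.

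For the $(\Leftarrow)$ direction I would assume that $\Delta_1\Theta_1$ has no nonconstant scalar inner factors and apply Lemma \ref{shly9}(2) to the two-sided inner pair $(\Theta\Delta,\,\Delta_1\Theta_1)$: their product equals $D(\Theta)D(\Delta)\, I_n$ and the right factor has no nonconstant scalar inner factors, so this product must coincide with $D(\Theta\Delta)\, I_n$, yielding $D(\Theta\Delta) = D(\Theta)D(\Delta)$.

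For the $(\Rightarrow)$ direction I would assume $D(\Theta\Delta) = D(\Theta)D(\Delta)$, so that the identity rewrites as $(\Theta\Delta)(\Delta_1\Theta_1) = D(\Theta\Delta)\, I_n$. Because $\Theta\Delta$ is two-sided inner (hence unitary a.e.\ on $\mathbb{T}$), the equation $(\Theta\Delta)\, X = D(\Theta\Delta)\, I_n$ has a unique solution $X = D(\Theta\Delta)(\Theta\Delta)^{\ast}$; applying this uniqueness both to $X = \Delta_1\Theta_1$ and to the inner complement $(\Theta\Delta)'$ provided by Definition \ref{sdef1} gives $\Delta_1\Theta_1 = (\Theta\Delta)'$. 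Lemma \ref{shly7} then forces $(\Theta\Delta)'$ to have no nonconstant scalar inner factors, so the same conclusion holds for $\Delta_1\Theta_1$. I do not anticipate a serious technical obstacle: the hypothesis that $\Theta$ and $\Delta$ individually have no nonconstant scalar inner factors is in fact not used in the argument above and serves mainly to keep the statement parallel to earlier results; the genuine content is the centrality of scalar inner functions in $H^\infty(\mathbb{M}_n)$ together with the uniqueness of the complement realizing $D(\Theta\Delta)$.
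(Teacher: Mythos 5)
Your proposal is correct and follows essentially the same route as the paper: both rest on the identity $(\Theta\Delta)(\Delta_{1}\Theta_{1})=D(\Theta)D(\Delta)I_{n}$ together with Lemma \ref{shly7} applied to the complement of $\Theta\Delta$; the paper merely packages the two directions into the single factorization $\Delta_{1}\Theta_{1}=\Omega\omega$ with $G(\Omega)=1$, whereas you invoke Lemma \ref{shly9}(2) for the converse direction. Your side remark that the hypothesis $G(\Theta)=G(\Delta)=1$ is not actually used is consistent with the paper's own argument, which likewise never appeals to it.
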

\begin{proof}
Write $D(\Theta \Delta)I_{n} =\Theta \Delta \Omega$ and $D(\Theta) D(\Delta)=D(\Theta \Delta) \omega$, then we have
$$\Theta \Delta \Omega \omega=D(\Theta) D(\Delta) I_{n}= \Theta D(\Delta)\Theta_{1} =\Theta \Delta \Delta_{1} \Theta_{1} .$$
This implies that $\Omega \omega= \Delta_{1} \Theta_{1}$. By Lemma \ref{shly7}, $G(\Omega)=1$, thus $ \omega=1$ if and only if $G(\Delta_{1} \Theta_{1})=1$.

%
\end{proof}

\section{Two-sided inner functions}

\subsection{The least common inner multiple of two-sided inner functions}
As we have mentioned in Remark \ref{shre5}, for general matrix-valued inner functions, the least common inner multiple will be zero, the following is a explicit example.
\begin{example}
Suppose
\begin{equation}\nonumber
  \Theta=\left(
  \begin{array}{c}
    z^{3} \\
    0 \\
  \end{array}
\right), ~\Delta=\left(
  \begin{array}{c}
    0\\
    z^{3} \\
  \end{array}
\right)
\end{equation}
are inner functions with values in  $\mathbb{M}_{2\times 1}$. It is not hard to check that
$$\Theta H^{2}(\mathbb{C}) \perp \Delta H^{2}(\mathbb{C}).$$
Hence $\text{left-l.c.m.}\{\Theta, \Delta\}=0$.
\end{example}

The following lemma shows that the behavior of two-sided inner functions is different, that is, $\text{left-l.c.m.}\{\Theta, \Delta\}\neq 0$ when $\Theta, \Delta$ are two-sided inner.
\begin{lemma}\label{shly32}
Suppose $\Theta$ and $\Delta$ are two-sided inner functions with values in  $\mathbb{M}_{n}$, and let $$\Omega=\text{left-l.c.m.}\{\Theta, \Delta\}.$$
Then the following statements hold:
\begin{description}
  \item[(a)] $\Omega$ is a nonzero two-sided inner function;
  \item[(b)] there exist two-sided inner functions $\Theta_{1}$ and $\Delta_{1}$ that are right coprime such that
  \begin{equation}\label{shf72}
    \Omega=\Theta \Theta_{1}=\Delta \Delta_{1}.
  \end{equation}
\end{description}
\end{lemma}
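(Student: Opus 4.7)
The plan is to first establish part (a) by producing an explicit nonzero element of $\Theta H^{2}(\mathbb{C}^{n}) \cap \Delta H^{2}(\mathbb{C}^{n})$, then apply Beurling--Lax--Halmos and squeeze the dimension of the representing inner function to exactly $n$.

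\medskip

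\textbf{Step 1 (the intersection is nontrivial).} Since $\Theta$ and $\Delta$ are two-sided inner, the minimal scalar inner multiples $\theta := D(\Theta)$ and $\delta := D(\Delta)$ exist, and there are two-sided inner $\Theta',\Delta'$ with $\Theta\Theta' = \theta I_{n}$ and $\Delta\Delta' = \delta I_{n}$. Then
\begin{equation*}
\theta\delta I_{n} \;=\; \Theta(\Theta'\delta) \;=\; \Delta(\Delta'\theta),
\end{equation*}
so $\theta\delta\, H^{2}(\mathbb{C}^{n}) \subset \Theta H^{2}(\mathbb{C}^{n}) \cap \Delta H^{2}(\mathbb{C}^{n})$; in particular the intersection is nonzero and shift-invariant.

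\textbf{Step 2 (squareness and two-sidedness of $\Omega$).} By Beurling--Lax--Halmos, $\Theta H^{2}(\mathbb{C}^{n})\cap \Delta H^{2}(\mathbb{C}^{n}) = \Omega H^{2}(\mathbb{C}^{m})$ for some inner $\Omega\in H^{\infty}(\mathbb{M}_{n\times m})$ with $m\leq n$. Because $\theta\delta I_{n}\in \Omega H^{2}(\mathbb{C}^{m})$, each column gives $\theta\delta e_j = \Omega a_j$ with $a_j\in H^{2}(\mathbb{C}^{m})$. Assembling these, $\theta\delta I_n = \Omega A$ with $A\in H^{2}(\mathbb{M}_{m\times n})$. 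At almost every $z\in\mathbb{T}$ the left-hand side is invertible, so $\Omega(z)$ must have rank $n$, forcing $m\geq n$. Thus $m=n$ and $\Omega^{\ast}\Omega = I_{n}$; since $\Omega(z)$ is a square isometry a.e., it is unitary a.e., i.e.\ $\Omega$ is two-sided inner. This proves (a).

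\textbf{Step 3 (existence of the cofactors).} From $\Omega H^{2}(\mathbb{C}^{n})\subset\Theta H^{2}(\mathbb{C}^{n})$ and the two-sidedness of $\Theta$, set $\Theta_{1} := \Theta^{\ast}\Omega$; the containment ensures each column of $\Theta_{1}$ is in $H^{2}(\mathbb{C}^{n})$, and $\|\Theta_{1}\|_{\infty}\leq 1$, so $\Theta_{1}\in H^{\infty}(\mathbb{M}_{n})$ and $\Omega = \Theta\Theta_{1}$. Using $\Theta\Theta^{\ast} = \Theta^{\ast}\Theta = I_n$ and $\Omega\Omega^{\ast} = \Omega^{\ast}\Omega = I_n$, a direct computation yields $\Theta_{1}^{\ast}\Theta_{1} = \Omega^{\ast}\Theta\Theta^{\ast}\Omega = I_{n}$ and $\Theta_{1}\Theta_{1}^{\ast} = \Theta^{\ast}\Omega\Omega^{\ast}\Theta = I_{n}$, so $\Theta_{1}$ is two-sided inner. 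Define $\Delta_{1}$ analogously.

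\textbf{Step 4 (right coprimeness).} Suppose $E$ is a common right inner divisor: $\Theta_{1}=F_{1}E$, $\Delta_{1}=F_{2}E$. Then $\Omega = \Theta F_{1}E = \Delta F_{2}E$, and the function $\Psi := \Theta F_{1} = \Delta F_{2}$ belongs to $\Theta H^{2}(\mathbb{C}^{n})\cap \Delta H^{2}(\mathbb{C}^{n}) = \Omega H^{2}(\mathbb{C}^{n})$. Thus $\Psi = \Omega G$ with $G\in H^{\infty}(\mathbb{M}_{n})$, and $\Omega = \Psi E = \Omega G E$ gives $GE = I_{n}$ a.e. Hence $G=E^{\ast}$; but $E^{\ast}\in H^{\infty}(\mathbb{M}_{n})$ with $E$ inner forces $E$ to be a constant unitary. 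Therefore $\Theta_{1}$ and $\Delta_{1}$ are right coprime, proving (b).

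\medskip

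The main obstacle I anticipate is \textbf{Step 2}, namely showing that the Beurling--Lax--Halmos representer of the intersection has the same column dimension $n$ as the domain. The scalar-multiple trick $\theta\delta I_{n}\in \Omega H^{2}(\mathbb{C}^{m})$ is what converts the abstract invariant-subspace fact into the pointwise rank bound that pins down $m=n$; without access to $D(\Theta),D(\Delta)$ this step would be much less clean. Steps 3 and 4 are largely algebra, provided one is careful to justify $\Theta_{1}\in H^{\infty}$ (not just $L^{\infty}$) and to invoke that a matrix inner function whose adjoint is also in $H^{\infty}$ must be a unitary constant.
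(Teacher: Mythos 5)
Your proof is correct and follows essentially the same route as the paper: both exhibit a nonzero scalar inner multiple ($\theta\delta$ in your case, $\mathrm{l.c.m.}\{D(\Theta),D(\Delta)\}$ in the paper's) sitting inside $\Theta H^{2}(\mathbb{C}^{n})\cap\Delta H^{2}(\mathbb{C}^{n})$ to force $\Omega\neq 0$ and two-sidedness, and both derive coprimeness of $\Theta_{1},\Delta_{1}$ by showing any common right inner divisor must be a constant unitary via the minimality of $\Omega H^{2}(\mathbb{C}^{n})$. The only cosmetic differences are that you inline the rank argument for two-sidedness where the paper cites its Lemma 2.2, and you test an arbitrary common right divisor where the paper works with the right-g.c.d.
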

\begin{proof}
For (a):
Let $D(\Theta) I_{n}=\Theta \Theta'$, $D(\Delta)I_{n}=\Delta \Delta'$ and
$$\omega=\text{l.c.m.}\{D(\Theta), D(\Delta)\},$$
then there exist scalar inner functions $\omega_{1}, \omega_{2}$ such that
\begin{equation}\label{shf55}
 \Theta \Theta' \omega_{1}= D(\Theta)\omega_{1} I_{n}=\omega I_{n}=D(\Delta)\omega_{2} I_{n}=\Delta \Delta' \omega_{2}.
\end{equation}
It implies that
$$  \Theta H^{2}(\mathbb{C}^{n}) \supset \Theta \Theta' \omega_{1} H^{2}(\mathbb{C}^{n})=\omega H^{2}(\mathbb{C}^{n}) =\Delta \Delta' \omega_{2} H^{2}(\mathbb{C}^{n}) \subset \Delta H^{2}(\mathbb{C}^{n}). $$
Therefore,
$$ \omega H^{2}(\mathbb{C}^{n})\subset \Theta H^{2}(\mathbb{C}^{n}) \cap \Delta H^{2}(\mathbb{C}^{n})= \Omega H^{2}(\mathbb{C}^{n}) , $$
which yields that $\Omega$ is a left inner divisor of $\omega$. Hence $\Omega \neq 0$. By Lemma \ref{shly33}, $\Omega$ is a two-sided inner function.

For (b): Since
 $$ \Omega H^{2}(\mathbb{C}^{n})=\Theta H^{2}(\mathbb{C}^{n}) \cap \Delta H^{2}(\mathbb{C}^{n}),$$
then there exist two-sided inner functions $\Theta_{1},\Delta_{1}$ such that $(\ref{shf72})$ holds. Let
$$\Lambda =\text{right-g.c.d.}\{\Theta_{1}, \Delta_{1}\},$$
then $\Theta_{1}\Lambda^{\ast}$ and $\Delta_{1} \Lambda^{\ast}$ are two-sided inner functions.
By $(\ref{shf72})$, $\Theta \Theta_{1} \Lambda^{\ast} =\Delta \Delta_{1} \Lambda^{\ast}=\Omega \Lambda^{\ast}$, which means $\Theta$ and $\Delta$ are both left inner divisors of $\Omega \Lambda^{\ast}$, and so $\Omega$ is a left inner divisor of $\Omega \Lambda^{\ast}$. Therefore, $\Lambda$ is a constant unitary matrix, which implies that $\Theta_{1}$ and $\Delta_{1}$ are right coprime.
\end{proof}

\begin{lemma}\label{shly23}
Suppose $\Theta$ and $\Delta$ are two-sided inner functions with values in  $\mathbb{M}_{n}$ and $\Theta\Delta= \Delta \Theta$,
then $$\text{left-l.c.m.}\{\Theta, \Delta\} =\Theta \Delta \Omega^{\ast},$$
where $\Omega=\text{right-g.c.d.}\{\Theta, \Delta\}$.
\end{lemma}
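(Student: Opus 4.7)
The plan is to produce $\Theta\Delta\Omega^{*}$ explicitly as a common left inner multiple of $\Theta$ and $\Delta$, and then sandwich it against the abstract least common left inner multiple, using Lemma~\ref{shly32} together with the right-coprimality that is built into the definition of $\Omega$ as the greatest common right inner divisor. The commutativity hypothesis $\Theta\Delta=\Delta\Theta$ enters at exactly one place, where it lets us read the same two-sided inner function in two ways.

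Setup: since $\Omega$ is a right inner divisor of both $\Theta$ and $\Delta$, write $\Theta=\Theta_{1}\Omega$ and $\Delta=\Delta_{1}\Omega$. By Lemma~\ref{shly33}, $\Theta_{1}$ and $\Delta_{1}$ are again two-sided inner, as they are left inner divisors of the two-sided inner functions $\Theta$ and $\Delta$. I would also record that $\Theta_{1}$ and $\Delta_{1}$ are right coprime: if $\Theta_{1}=\Theta_{1}'C$ and $\Delta_{1}=\Delta_{1}'C$ for some two-sided inner $C$, then $C\Omega$ would be a common right inner divisor of $\Theta$ and $\Delta$ containing $\Omega$ as a right factor, forcing $C$ to be a unitary constant by maximality of $\Omega$. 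Since $\Omega$ is two-sided inner, we also have $\Theta\Omega^{*}=\Theta_{1}$ and $\Delta\Omega^{*}=\Delta_{1}$ in $H^{\infty}(\mathbb{M}_{n})$, so $\Theta\Delta\Omega^{*}$ is a well-defined two-sided inner function. Using $\Theta\Delta=\Delta\Theta$, I can rewrite it in two ways,
\[
\Theta\Delta\Omega^{*}=\Theta\Delta_{1}=\Delta\Theta\Omega^{*}=\Delta\Theta_{1},
\]
which simultaneously exhibits $\Theta$ and $\Delta$ as left inner divisors, so $\Theta\Delta\Omega^{*}$ is a common left inner multiple.

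For minimality, set $M:=\text{left-l.c.m.}\{\Theta,\Delta\}$. By Lemma~\ref{shly32}(b) there exist right coprime two-sided inner functions $A$ and $B$ with $M=\Theta A=\Delta B$, and $M$ is a left inner divisor of every common left inner multiple. In particular $\Theta\Delta\Omega^{*}=MC$ for some two-sided inner $C$. Cancelling $\Theta$ from $\Theta AC=\Theta\Delta_{1}$ (legitimately, since $\Theta^{*}\Theta=I_{n}$ a.e.\ on $\mathbb{T}$) yields $AC=\Delta_{1}$, and the same manoeuvre on the $\Delta$-side gives $BC=\Theta_{1}$. Thus $C$ is a common right inner divisor of $\Theta_{1}$ and $\Delta_{1}$, and the right coprimality noted in the setup forces $C$ to be a unitary constant, whence $M=\Theta\Delta\Omega^{*}$.

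I do not expect a serious technical obstacle here; the argument is essentially a careful factorization accounting. The single delicate point is making sure the cancellation of the two-sided inner factors $\Theta$ and $\Delta$ is valid (it is, because two-sided inner functions are unitary pointwise a.e.\ on $\mathbb{T}$), and recognizing that the right-coprimality of $\Theta_{1}$ and $\Delta_{1}$ is exactly what forces $C$ to be trivial at the end. Without the commutativity hypothesis one could no longer identify $\Theta\Delta_{1}$ with $\Delta\Theta_{1}$, and the whole clean formula $\Theta\Delta\Omega^{*}$ would collapse.
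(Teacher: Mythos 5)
Your proof is correct and follows essentially the same route as the paper's: both exhibit $\Theta\Delta\Omega^{*}=\Theta\Delta_{1}=\Delta\Theta_{1}$ as a common left inner multiple via the commutativity hypothesis, factor it through the least common left multiple, and then cancel to see the residual inner factor is a common right divisor of $\Theta\Omega^{*}$ and $\Delta\Omega^{*}$, which are right coprime by maximality of $\Omega$. The only cosmetic difference is that you spell out the right-coprimality of $\Theta_{1}$ and $\Delta_{1}$ (and invoke Lemma~\ref{shly32}(b), which is not actually needed), where the paper states it as an immediate consequence of the definition of the greatest common right inner divisor.
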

\begin{proof}
Let $\Lambda= \text{left-l.c.m.}\{\Theta, \Delta\}$, then there exist two-sided inner functions $\Lambda_{1}$ and $ \Lambda_{2}$  such that
\begin{equation}\label{shf56}
  \Lambda= \Theta \Lambda_{1}=\Delta \Lambda_{2}.
\end{equation}
Since $\Theta \Delta \Omega^{\ast}=\Delta\Theta \Omega^{\ast}$, then
$$\Theta \Delta \Omega^{\ast} H^{2}(\mathbb{C}^{n}) \subset  \Theta H^{2}(\mathbb{C}^{n}) \cap \Delta H^{2}(\mathbb{C}^{n})=\Lambda H^{2}(\mathbb{C}^{n}) ,$$
which means that there exists a two-sided inner function $\Lambda_{3}$ such that
\begin{equation}\label{shf57}
\Theta \Delta \Omega^{\ast}=\Lambda \Lambda_{3}=\Delta \Theta \Omega^{\ast}.
\end{equation}
By $(\ref{shf56})$ and $(\ref{shf57})$ we obtain
$$\Theta \Omega^{\ast}=\Lambda_{2}\Lambda_{3}~\text{and}~\Delta \Omega^{\ast} =\Lambda_{1} \Lambda_{3}.$$
Since $\Omega$ is the greatest common right inner divisor of $\Theta$ and $\Omega$, then $\Theta \Omega^{\ast}$ and $\Delta \Omega^{\ast}$ are right coprime. Hence $\Lambda_{3}$ is a constant unitary matrix. By $(\ref{shf57})$, we obtain the desired result.
\end{proof}

As we known, for a unique factorization domain $R$, if $a $ is a factor of $ bc$ and $a,b$ are coprime, then $a$ is a factor of $ c$. The following lemma shows that the similar property holds for two-sided inner functions.

\begin{lemma}\label{shly29}
Suppose $\theta$ is a nonconstant scalar inner function and $\Theta, \Omega $ are two-sided inner functions with values in  $\mathbb{M}_{n}$. We have the following results:
\begin{enumerate}
  \item If $\theta$ and $D(\Theta)$ are coprime, then $\theta$ is an inner factor of $\Theta \Omega$ if and only if $\theta$ is an inner factor of $\Omega$.
  \item If $\theta$ and $D(\Omega)$ are coprime, then $\theta$ is an inner factor of $\Theta \Omega$ if and only if $\theta$ is an inner factor of $\Theta$.
\end{enumerate}
\end{lemma}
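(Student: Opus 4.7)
The plan is to prove both parts by essentially the same device: multiply through by the two-sided inner partner provided by Definition \ref{sdef1} to convert the matrix-valued divisibility question into a scalar one, and then settle the scalar question by a standard coprimality argument for scalar inner functions.

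In both parts the ``if'' direction is trivial: if $\Omega=\theta\Omega_{1}$ (resp.\ $\Theta=\theta\Theta_{1}$) with $\Omega_{1}$ (resp.\ $\Theta_{1}$) inner, then $\Theta\Omega=\theta(\Theta\Omega_{1})$ (resp.\ $\theta(\Theta_{1}\Omega)$) exhibits $\theta I_{n}$ as a left inner divisor of $\Theta\Omega$. For the nontrivial direction of (1), assume $\Theta\Omega=\theta\Lambda$ with $\Lambda\in H^{\infty}(\mathbb{M}_{n})$ inner. By Definition \ref{sdef1} and the remark after it, there is a two-sided inner $\Theta'$ with $\Theta\Theta'=\Theta'\Theta=D(\Theta)I_{n}$. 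Left-multiplication by $\Theta'$ gives
\begin{equation*}
D(\Theta)\,\Omega \;=\; \Theta'\Theta\,\Omega \;=\; \theta\,\Theta'\Lambda .
\end{equation*}
Now I invoke the scalar identity $u_{1}H^{2}\cap u_{2}H^{2}=u_{1}u_{2}H^{2}$ whenever $u_{1},u_{2}$ are coprime scalar inner functions, which is a direct consequence of $\text{g.c.d.}\{u_{1},u_{2}\}\cdot\text{l.c.m.}\{u_{1},u_{2}\}=u_{1}u_{2}$ (up to unimodular constants). Entry-by-entry, the display above reads $D(\Theta)\omega_{ij}=\theta(\Theta'\Lambda)_{ij}$, which lies simultaneously in $D(\Theta)H^{2}$ and $\theta H^{2}$, and hence, by coprimality of $\theta$ and $D(\Theta)$, in $\theta D(\Theta)H^{2}$. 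Cancelling the scalar inner function $D(\Theta)$ a.e.\ on $\mathbb{T}$ yields $\omega_{ij}\in\theta H^{2}$ for every $i,j$, so $\Omega=\theta\Omega_{1}$ for some $\Omega_{1}\in H^{\infty}(\mathbb{M}_{n})$. Since $\Omega_{1}=\overline{\theta}\,\Omega$ a.e.\ on $\mathbb{T}$ and $|\theta|=1$ there, $\Omega_{1}^{\ast}\Omega_{1}=\Omega^{\ast}\Omega=I_{n}$ a.e., so $\Omega_{1}$ is inner, which proves (1).

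Part (2) is the mirror argument: writing $D(\Omega)I_{n}=\Omega\Omega'=\Omega'\Omega$ and right-multiplying $\Theta\Omega=\theta\Lambda$ by $\Omega'$ yields $D(\Omega)\Theta=\theta\Lambda\Omega'$, and then the same scalar l.c.m.\ step (now exploiting coprimality of $\theta$ with $D(\Omega)$) together with the same a.e.\ isometry check gives $\Theta=\theta\Theta_{1}$ with $\Theta_{1}$ inner. The main---and essentially the only---conceptual step in the argument is the passage from matrix divisibility by $\theta I_{n}$ to scalar divisibility by $\theta$ via the minimal scalar inner multiple; after that everything reduces to the elementary scalar identity $u_{1}H^{2}\cap u_{2}H^{2}=u_{1}u_{2}H^{2}$. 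As an alternative packaging of the same idea one could apply Lemma \ref{shly23}: because $\theta I_{n}$ commutes with $\Theta$ and Lemma \ref{shly1} gives $\text{right-g.c.d.}\{\theta I_{n},\Theta\}=I_{n}$, one obtains $\text{left-l.c.m.}\{\theta I_{n},\Theta\}=\theta\Theta$, and cancelling the two-sided inner $\Theta$ in $\Theta\Omega\in\theta\Theta H^{2}(\mathbb{C}^{n})$ yields the conclusion of (1); part (2) would then follow by the symmetric right-sided variant.
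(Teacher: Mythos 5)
Your proof is correct, but it takes a genuinely different route from the paper's. You scalarize the divisibility question: multiplying $\Theta\Omega=\theta\Lambda$ on the left by the complementary inner function $\Theta'$ with $\Theta'\Theta=D(\Theta)I_{n}$ reduces everything, entry by entry, to the elementary lattice identity $u_{1}H^{2}\cap u_{2}H^{2}=u_{1}u_{2}H^{2}$ for coprime scalar inner functions, and part (2) is handled symmetrically by right-multiplying by $\Omega'$. The paper instead proves part (1) by first noting (via Lemma \ref{shly1}) that $\theta I_{n}$ and $\Theta$ are right coprime, then invoking Lemma \ref{shly23} for the commuting pair $\theta I_{n},\Theta$ to get $\text{left-l.c.m.}\{\theta I_{n},\Theta\}=\theta\Theta$, so that $\Theta\Omega H^{2}(\mathbb{C}^{n})\subset\theta H^{2}(\mathbb{C}^{n})\cap\Theta H^{2}(\mathbb{C}^{n})=\theta\Theta H^{2}(\mathbb{C}^{n})$ forces $\Theta\Omega=\theta\Theta\Theta_{1}$; this is exactly the ``alternative packaging'' you sketch at the end. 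For part (2) the paper does not mirror the argument but passes to $\widetilde{\Theta},\widetilde{\Omega}$ using $D(\widetilde{\Theta})=\widetilde{D(\Theta)}$ and applies part (1). Your version buys self-containedness --- it needs only Definition \ref{sdef1} and a standard scalar fact, avoiding both Lemma \ref{shly23} and the duality lemma from the literature, and it treats the two parts symmetrically --- while the paper's version is more structural and reuses machinery already established for later arguments. One cosmetic caution: make sure you use the left multiple $\Theta'\Theta=D(\Theta)I_{n}$ (as you do in the display) rather than $\Theta\Theta'$ when you cancel on the left; the remark after Definition \ref{sdef1} guarantees the two coincide, so this is not a gap.
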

\begin{proof}
The sufficiency of (1) and (2) is obvious.

For the necessity of (1), assume $\theta$ is an inner factor of $\Theta \Omega$, then there exists a two-sided inner function $\Delta$ such that
$$\Theta \Omega=\theta \Delta .$$
Since $\theta$ and $D(\Theta)$ are coprime, by Lemma \ref{shly1}, $\theta I_{n}$ and $\Theta$ are right coprime, by Lemma \ref{shly23},
$$\Theta \Omega H^{2}(\mathbb{C}^{n}) \subset \theta H^{2}(\mathbb{C}^{n})\cap \Theta H^{2}(\mathbb{C}^{n})=\theta \Theta H^{2}(\mathbb{C}^{n}). $$
Therefore, there exists a two-sided inner function $\Theta_{1}$ such that $\Theta \Omega=\theta \Theta \Theta_{1}$, which implies that $\theta$ is an inner factor of $\Omega$.

For the necessity of (2). By Lemma 4.12 in \cite{CHL2019}, we have $D(\widetilde{\Theta})=\widetilde{D(\Theta)}$. If $\theta$ is an inner factor of $\Theta \Omega$, then $\widetilde{\theta}$ is an inner factor of $ \widetilde{\Omega}\widetilde{\Theta}$. By (1), we obtain that $\widetilde{\theta}$ is an inner factor of $\widetilde{\Theta}$. Hence $\theta$ is an inner factor of $\Theta$.

\end{proof}

\begin{corollary}\label{shly28}
Suppose $F_{1},F_{2} \in H^{\infty}(\mathbb{M}_{n})$ and $\theta$ is a scalar inner function. Then the following statements hold:

\begin{enumerate}
  \item If $\theta I_{n}$ and $F_{1}$ are left coprime, and $\theta$ is an inner factor of $F_{1}F_{2}$, then $\theta$ is an inner factor of $F_{2}$.
  \item If $\theta I_{n}$ and $F_{2}$ are left coprime, and $\theta$ is an inner factor of $F_{1}F_{2}$, then $\theta$ is an inner factor of $F_{1}$.
\end{enumerate}
\end{corollary}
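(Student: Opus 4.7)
The strategy is to reduce to Lemma~\ref{shly29} via inner--outer factorization. Write $F_1=\Theta_1G_1$ and $F_2=\Theta_2G_2$, where $\Theta_i$ is the inner part and $G_i$ is the outer part of $F_i$; in the generic case ($\det F_i\not\equiv 0$), the $\Theta_i$ are $n\times n$ and hence two-sided inner, so Lemma~\ref{shly29} is applicable. By definition and Lemma~\ref{shly1}, the hypothesis ``$\theta I_n$ and $F_i$ are left coprime'' is equivalent to ``$\theta$ and $D(\Theta_i)$ are coprime''. The central computation is that the inner part of $F_1F_2=\Theta_1G_1\Theta_2G_2$ equals $\Theta_1\Lambda$, where $\Lambda$ denotes the inner part of $G_1\Theta_2$. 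This uses two facts: the inner part of $\Theta_1H$ for $H\in H^\infty(\mathbb{M}_n)$ is $\Theta_1$ times the inner part of $H$, and right-multiplication by an outer function preserves the inner part, via $\overline{HG_2\cdot H^2(\mathbb{C}^n)}=\overline{H\cdot H^2(\mathbb{C}^n)}$. The hypothesis on $F_1F_2$ then becomes $\theta I_n\mid \Theta_1\Lambda$ as a left inner divisor.

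For (1), the coprimality of $\theta$ and $D(\Theta_1)$ together with Lemma~\ref{shly29}(1) yields $\theta I_n\mid\Lambda$, so $G_1\Theta_2\in\theta H^\infty(\mathbb{M}_n)$; write $G_1\Theta_2=\theta K$ for some $K\in H^\infty(\mathbb{M}_n)$. Then $\bar{\theta}\Theta_2=G_1^{-1}K$ is a matrix-valued Smirnov-class function (a ratio of an $H^\infty$-function by an outer $H^\infty$-function, whose determinant is outer scalar) whose boundary values have norm bounded by $1$ (since $\Theta_2$ is inner); the Smirnov maximum principle then forces $\bar{\theta}\Theta_2\in H^\infty(\mathbb{M}_n)$, so $\theta I_n\mid\Theta_2$ and $\theta$ is a scalar inner factor of $F_2$.

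For (2), Lemma~\ref{shly29}(2) requires $\theta$ coprime with $D(\Lambda)$, whereas the hypothesis only gives coprimality with $D(\Theta_2)$. The key intermediate step is to show $D(\Lambda)\mid D(\Theta_2)$: letting $\Theta_2\Theta_2'=D(\Theta_2)I_n$ and multiplying on the left by $G_1$ yields $\Lambda J\Theta_2'=D(\Theta_2)G_1$, where $G_1\Theta_2=\Lambda J$ is the inner--outer factorization. Comparing inner parts of both sides, and using that the inner part of $D(\Theta_2)G_1$ is $D(\Theta_2)I_n$ (since $G_1$ is outer), $\Lambda$ is a left inner divisor of $D(\Theta_2)I_n$; by minimality of $D(\Lambda)$ this forces $D(\Lambda)\mid D(\Theta_2)$. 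Hence $\theta$ is coprime with $D(\Lambda)$, and Lemma~\ref{shly29}(2) gives $\theta I_n\mid\Theta_1$, i.e., $\theta$ is a scalar inner factor of $F_1$.

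The main obstacle is the outer-cancellation step in (1): unlike the right-outer case (handled by the routine identity $\overline{HG\cdot H^2}=\overline{H\cdot H^2}$), removing an outer factor from the left of $G_1\Theta_2\in\theta H^\infty$ requires invoking the Smirnov maximum principle for matrix Smirnov-class functions, which is more delicate. A secondary subtlety is the degenerate case $\det F_i\equiv 0$, in which the inner parts may be rectangular rather than $n\times n$; this can be handled either by working with the Beurling--Lax--Halmos representation of $\overline{F_i\cdot H^2(\mathbb{C}^n)}$ directly or by a perturbation argument.
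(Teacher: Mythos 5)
The paper states this corollary without proof, presenting it as an immediate consequence of Lemma \ref{shly29} after passing to inner parts (recall that ``$\theta I_n$ and $F_i$ are left coprime'' means by definition that $\theta I_n$ and the inner part $\Theta_i$ of $F_i$ are left coprime, which by Lemma \ref{shly1} is the coprimality of $\theta$ and $D(\Theta_i)$). Your argument is correct, and it supplies precisely the detail that keeps the reduction from being immediate: the inner part of $F_1F_2=\Theta_1G_1\Theta_2G_2$ is $\Theta_1\Lambda$ with $\Lambda$ the inner part of $G_1\Theta_2$, \emph{not} $\Theta_1\Theta_2$, so Lemma \ref{shly29} cannot be applied to the pair $(\Theta_1,\Theta_2)$ directly. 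Both of your bridging steps check out: for (2), the identity $\Lambda J\Theta_2'=D(\Theta_2)G_1$ shows $\Lambda$ left-divides $D(\Theta_2)I_n$, hence $D(\Lambda)$ divides $D(\Theta_2)$ and Lemma \ref{shly29}(2) applies; for (1), the Smirnov maximum principle legitimately cancels the left outer factor $G_1$ (its inverse is $\operatorname{adj}(G_1)/\det G_1$ with $\det G_1$ outer, so $\overline{\theta}\Theta_2=G_1^{-1}K$ lies in the matrix Smirnov class with unimodular boundary values).

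Two small remarks. First, the Smirnov detour in (1) is avoidable: since $\theta$ is scalar, $\overline{\theta}\,G_1\Theta_2\in H^\infty(\mathbb{M}_n)$ is equivalent to $\widetilde{\theta}I_n$ being a left inner divisor of $\widetilde{\Theta_2}\widetilde{G_1}$, whose left inner part is exactly $\widetilde{\Theta_2}$ because $\widetilde{G_1}$ is outer; this yields $\theta I_n\mid\Theta_2$ in one line, and is the same tilde symmetry the paper uses to derive Lemma \ref{shly29}(2) from \ref{shly29}(1) (alternatively, (1) follows from (2) applied to $\widetilde{F_2},\widetilde{F_1}$). Second, the degenerate case $\det F_i\equiv 0$ that you flag lies outside the paper's stated framework (its Inner--Outer Factorization already presumes square factors), and is harmless here: for nonconstant $\theta$ a properly rectangular inner part cannot be left coprime with $\theta I_n$ (compare multiplicities of the compressed shift on $K_\theta\otimes\mathbb{C}^n$), while for constant $\theta$ the conclusion is trivial, so no perturbation argument is needed.
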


\begin{proposition}
Suppose $\Omega, \Theta, \Delta$ and $\Lambda$ are two-sided inner functions with values in  $\mathbb{M}_{n}$ and $\Theta \Delta= \Omega \Lambda$, then $$\Theta \Delta=\text{left-l.c.m.}\{\Theta, \Omega\}$$ if and only if $\Delta$ and $\Lambda$ are right coprime.
\end{proposition}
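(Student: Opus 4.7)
The plan is to set $\Xi := \text{left-l.c.m.}\{\Theta,\Omega\}$ and invoke Lemma \ref{shly32} to produce two-sided inner factors $\Theta_{1}, \Omega_{1}$, right coprime, satisfying $\Xi = \Theta\Theta_{1} = \Omega\Omega_{1}$. Since $\Theta\Delta = \Omega\Lambda$ lies in $\Theta H^{2}(\mathbb{C}^{n}) \cap \Omega H^{2}(\mathbb{C}^{n}) = \Xi H^{2}(\mathbb{C}^{n})$, I obtain a two-sided inner $\Gamma$ with $\Theta\Delta = \Xi\Gamma$; left-canceling $\Theta$ and $\Omega$ (legitimate because both are unitary a.e.\ on $\mathbb{T}$) then yields
$$\Delta = \Theta_{1}\Gamma, \qquad \Lambda = \Omega_{1}\Gamma.$$
Hence the desired equality $\Theta\Delta = \Xi$ is equivalent to $\Gamma$ being a constant unitary matrix, and the proof reduces to showing: $\Gamma$ is a constant unitary if and only if $\Delta$ and $\Lambda$ are right coprime.

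For the direction ($\Leftarrow$), the displayed identities above exhibit $\Gamma$ as a common right inner divisor of $\Delta$ and $\Lambda$, so it must be a right inner divisor of $\text{right-g.c.d.}\{\Delta, \Lambda\}$; right coprimality then forces $\Gamma$ to be a constant unitary matrix.

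For the direction ($\Rightarrow$), I would start from an arbitrary common right inner divisor $R$ of $\Delta$ and $\Lambda$, write $\Delta = \Delta' R$ and $\Lambda = \Lambda' R$, and note that $\Delta', \Lambda', R$ are all two-sided inner (by Lemma \ref{shly33} together with the fact that $\Phi \mapsto \widetilde{\Phi}$ interchanges left and right divisibility). Canceling $R$ on the right of $\Theta\Delta' R = \Omega\Lambda' R$ gives $\Theta\Delta' = \Omega\Lambda'$, so $\Theta\Delta' H^{2}(\mathbb{C}^{n}) \subset \Xi H^{2}(\mathbb{C}^{n}) = \Theta\Delta\, H^{2}(\mathbb{C}^{n})$; hence $\Theta\Delta' = \Theta\Delta\, S$ for some two-sided inner $S$. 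Canceling $\Theta$ on the left and substituting $\Delta = \Delta' R$ produces $\Delta' = \Delta' R S$, whence $RS = I$ and $R$ is a constant unitary. Therefore $\Delta$ and $\Lambda$ are right coprime.

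The whole argument is essentially formal, depending only on the Beurling-Lax-Halmos description of $\text{left-l.c.m.}$ and on Lemma \ref{shly32}; no serious obstacle is anticipated. The main technical point to track is the repeated use of left and right cancellation, which is justified throughout because all the matrix-valued functions in play are two-sided inner and thus pointwise unitary a.e.\ on $\mathbb{T}$.
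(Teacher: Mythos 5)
Your proof is correct and takes essentially the same route as the paper: both rest on Lemma \ref{shly32} and the factorization $\Delta=\Theta_{1}\Gamma$, $\Lambda=\Omega_{1}\Gamma$, reducing everything to whether $\Gamma$ is a constant unitary. The only divergence is your necessity direction, where you argue via an arbitrary common right inner divisor $R$ and the identity $RS=I$; the paper obtains this more quickly by noting that $\Theta\Delta=\Xi$ forces $\Delta=\Theta_{1}$ and $\Lambda=\Omega_{1}$ up to a common constant unitary right factor, which are right coprime by Lemma \ref{shly32}(b) --- but your version is equally valid.
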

\begin{proof}
The necessity follows from Lemma \ref{shly32}.

For the sufficiency.
Let $\Gamma=\text{left-l.c.m.}\{\Theta, \Omega\}$, by Lemma \ref{shly32}
there exist two-sided inner functions $\Theta_{1}, \Omega_{1}$ that are right coprime such that
$$\Gamma =\Theta \Theta_{1} =\Omega \Omega_{1}.$$
Since $\Theta \Delta= \Omega \Lambda$ implies that $\Theta, \Omega$ are both left inner divisors of $\Theta \Delta$, then $\Gamma$ is a left inner divisor of $\Theta \Delta$. Thus there exists a two-sided inner function $\Gamma_{1}$ such that
$$\Theta \Delta= \Omega \Lambda =\Gamma \Gamma_{1}\Rightarrow \Delta =\Theta_{1} \Gamma_{1}, \Lambda=\Omega_{1} \Gamma_{1}.$$
Since $\Delta$ and $\Lambda$ are right coprime, then $\Gamma_{1}$ is a constant unitary matrix. Therefore $\Theta \Delta =\text{left-l.c.m.}\{\Theta, \Omega\}$.

\end{proof}

\subsection{The greatest common inner divisor of two-sided inner functions}

Recall that every matrix-valued two-sided inner function has a scalar inner multiple (see Definition \ref{sdef1}). However, it is worth pointing out that $D(\Delta)I_{n}=\Delta \Delta'$ does not imply that for any right factors $\Delta_{r}$ of $\Delta$, there exists a left factor $\Delta_{l}'$ of $\Delta'$, such that $\Delta_{r} \Delta_{l}'=\delta_{0}I_{n}$, where $\delta_{0}$ is a nonconstant inner factor of $D(\Delta)$.
\begin{example}
Suppose $u,v$ are coprime nonconstant scalar inner functions, let
\begin{equation}\nonumber
  \Theta=\left(
  \begin{array}{ccc}
    uv&0& 0\\
    0& u&0 \\
    0&0&v\\
  \end{array}
\right), ~\Theta'=\left(
  \begin{array}{ccc}
    1&0&0\\
    0&v& 0\\
    0&0&u\\
  \end{array}
\right),
\end{equation}
then $D(\Theta)=\Theta \Theta'=uv I_{n}$. It is clear that $\Theta$ has right factors
\begin{equation}\nonumber
  \Theta_{1}=\left(
  \begin{array}{ccc}
    u&0& 0\\
    0& 1&0 \\
    0&0&1\\
  \end{array}
\right), ~\Theta_{2}=\left(
  \begin{array}{ccc}
    u&0&0\\
    0&u& 0\\
    0&0&1\\
  \end{array}
\right),
\end{equation}
 and
  $$D(\Theta_{1})=D(\Theta_{2})=uI_{n},~~\Theta_{2}=\text{right-g.c.d.}\{uI_{n}, \Theta\}.$$
 It is not hard to check that there does not exist any left factors $\Theta_{1}'$ of $\Theta'$ such that $\Theta_{1}\Theta_{1}'=uI_{n}$, but there exists a left factor
 \begin{equation}\nonumber
\Theta_{2}'=\left(
  \begin{array}{ccc}
    1&0&0\\
    0&1& 0\\
    0&0&u\\
  \end{array}
\right),
\end{equation}
of $\Theta'$
such that $\Theta_{2} \Theta_{2}'=uI_{n}$.
\end{example}

The above simple example inspired us to get the following crucial lemma, which will be useful for the proof of our main results.
\begin{lemma}\label{shly25}
Suppose $\Theta$ and $ \Delta$ are two-sided inner functions with values in  $\mathbb{M}_{n}$ without nonconstant scalar inner factors. If $\Theta \Delta$ has a nonconstant scalar inner factor $\theta$, let $\Theta_{r}=\text{right-g.c.d.}\{\theta I_{n}, \Theta\}$, then there exists $\Delta_{l}$ which is a left inner divisor of $\Delta$, such that $\Theta_{r} \Delta_{l}=\theta I_{n}$. Moreover,
$$D(\Theta_{r})=D(\Delta_{l})=\theta.$$
\end{lemma}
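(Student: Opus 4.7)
The plan is to realize the required $\Delta_l$ as the Beurling--Lax--Halmos generator of the shift-invariant subspace
$$N \;:=\; \{f \in H^2(\mathbb{C}^n) : \Theta f \in \theta H^2(\mathbb{C}^n)\},$$
which simultaneously produces a canonical candidate for $\Delta_l$ and exhibits it as a left inner divisor of $\Delta$. As a preliminary, I note that $\Theta_r$ is two-sided inner: it is a right inner divisor of the two-sided inner $\theta I_n$, which follows from Lemma \ref{shly33} after tilde-ing. Consequently, there is a unique two-sided inner $\Delta_l := \theta \Theta_r^{*} \in H^\infty(\mathbb{M}_n)$ with $\Theta_r \Delta_l = \Delta_l \Theta_r = \theta I_n$, where the second equality uses that the scalar $\theta I_n$ is central in $\mathbb{M}_n$.

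The heart of the argument is to identify $N$ explicitly. Since $N$ is manifestly shift-invariant, the Beurling--Lax--Halmos Theorem gives $N = \Phi H^2(\mathbb{C}^n)$ for some inner $\Phi$. Because $\Theta$ is unitary a.e.\ on $\mathbb{T}$, the map $f \mapsto \Theta f$ is injective on $H^2(\mathbb{C}^n)$, and
$$\Theta N \;=\; \theta H^2(\mathbb{C}^n) \cap \Theta H^2(\mathbb{C}^n) \;=\; \text{left-l.c.m.}\{\theta I_n, \Theta\}\, H^2(\mathbb{C}^n)$$
by the defining property of the left-l.c.m. Since the scalar $\theta I_n$ commutes with $\Theta$, Lemma \ref{shly23} applies and yields
$$\text{left-l.c.m.}\{\theta I_n, \Theta\} \;=\; \theta\, \Theta\, \Theta_r^{*} \;=\; \theta E,$$
writing $\Theta = E \Theta_r$. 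Comparing $\Theta \Phi H^2(\mathbb{C}^n) = \theta E H^2(\mathbb{C}^n) = \Theta\,(\theta \Theta_r^*)\, H^2(\mathbb{C}^n) = \Theta \Delta_l H^2(\mathbb{C}^n)$ and canceling the injective multiplication by $\Theta$, I obtain $\Phi H^2(\mathbb{C}^n) = \Delta_l H^2(\mathbb{C}^n)$, so $\Phi = \Delta_l$ up to a constant unitary right factor, and therefore $N = \Delta_l H^2(\mathbb{C}^n)$.

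Now the hypothesis $\Theta \Delta = \theta \Lambda$ gives $\Theta(\Delta h) = \theta(\Lambda h) \in \theta H^2(\mathbb{C}^n)$ for every $h \in H^2(\mathbb{C}^n)$, hence $\Delta H^2(\mathbb{C}^n) \subset N = \Delta_l H^2(\mathbb{C}^n)$. A second application of the Beurling--Lax--Halmos Theorem produces a two-sided inner $\Delta'' \in H^\infty(\mathbb{M}_n)$ with $\Delta = \Delta_l \Delta''$, which is precisely the assertion that $\Delta_l$ is a left inner divisor of $\Delta$.

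For the ``moreover'' clause, I would first check $G(\Theta_r) = G(\Delta_l) = 1$: any nonconstant scalar inner factor of $\Theta_r$ would, via $\Theta = E\Theta_r$, produce one of $\Theta$, contradicting $G(\Theta) = 1$; the same argument applied to $\Delta = \Delta_l \Delta''$ forces $G(\Delta_l) = 1$. Then Lemma \ref{shly9}(2), applied to the identity $\Theta_r \Delta_l = \theta I_n$ in each of its two directions, yields $D(\Delta_l) = \theta = D(\Theta_r)$. The main technical point of the whole proof is the identification $\Phi = \Delta_l$ in the middle step; this is exactly where the commutativity of the scalar factor $\theta I_n$ with $\Theta$ is essential, since it is what lets Lemma \ref{shly23} convert the abstract left-l.c.m.\ into the explicit expression $\theta E$ built from the right-g.c.d.\ $\Theta_r$.
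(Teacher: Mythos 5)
Your proof is correct and follows essentially the same route as the paper: both arguments reduce to the containment $\Theta\Delta H^{2}(\mathbb{C}^{n})\subset \theta H^{2}(\mathbb{C}^{n})\cap\Theta H^{2}(\mathbb{C}^{n})$, identify the right-hand side as $\theta\Theta\Theta_{r}^{\ast}H^{2}(\mathbb{C}^{n})$ via Lemma \ref{shly23}, conclude that $\Delta_{l}=\theta\Theta_{r}^{\ast}$ left-divides $\Delta$, and obtain the ``moreover'' clause from $G(\Theta_{r})=G(\Delta_{l})=1$ together with Lemma \ref{shly9}. Your packaging through the auxiliary invariant subspace $N=\{f:\Theta f\in\theta H^{2}(\mathbb{C}^{n})\}$ is only a cosmetic reformulation of the same argument.
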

\begin{proof}
If $\theta$ and $D(\Theta)$(or $D(\Delta)$) are  coprime,
by Lemma \ref{shly29}, $\theta$ is an inner divisor of $\Delta$(or $\Theta$). This contradicts the fact that $\Theta$ and $\Delta$ have no nonconstant scalar inner factors. Therefore, $\theta$ and $D(\Theta)$(or $D(\Delta)$) are not coprime.
Since $\Theta \Delta$ has a nonconstant scalar inner factor $\theta$, then there exists a two-sided inner function $\Omega$ such that
$\Theta \Delta=\theta \Omega$. This yields that $\Theta, \theta I_{n}$ are left inner divisor of $\Theta \Delta$, thus
\begin{equation}\label{shf62}
  \Theta \Delta H^{2}(\mathbb{C}^{n}) \subset \theta H^{2}(\mathbb{C}^{n}) \cap \Theta H^{2}(\mathbb{C}^{n}).
\end{equation}
By Lemma \ref{shly23}, $\text{left-l.c.m.}\{\theta I_{n}, \Theta\}=\theta \Theta \Theta_{r}^{\ast}$. Combining this with $(\ref{shf62})$, there exists a two-sided inner function $\Lambda$ such that $\Theta \Delta=\theta \Theta \Theta_{r}^{\ast} \Lambda$, which implies that $\theta \Theta_{r}^{\ast} $ is a left inner divisor of $\Delta$. Let $\Delta_{l}=\theta \Theta_{r}^{\ast} $, then $\Theta_{r} \Delta_{l} =\theta I_{n}$.
Since $\Theta, \Delta$ have no nonconstant scalar inner factors, then so do $\Theta_{r}, \Delta_{l}$. By Lemma \ref{shly9},
$$D(\Theta_{r})=D(\Delta_{l})=\theta.$$
\end{proof}
\begin{remark}\label{shre3}
It is clear that $\Theta \Delta$ has a nonconstant scalar inner factor $\theta$ if and only if $ \widetilde{\Delta }\widetilde{\Theta}$ has a nonconstant scalar inner factor $\widetilde{\theta}$. Let
$$\Delta_{l}=\text{left-g.c.d.}\{\theta I_{n}, \Delta\}, $$
by a similar argument to the proof of Lemma \ref{shly25}, there exists $\Theta_{r}$ is a right factor of $\Theta$ such that
$\Theta_{r} \Delta_{l}=\theta I_{n}$.
\end{remark}

%

\begin{corollary}\label{shly3}
Suppose  $\Theta$ is a two-sided inner function with values in  $\mathbb{M}_{n}$ and $D(\Theta)=\theta_{1} \theta_{2}$, where $\theta_{i}$ are nonconstant scalar inner functions. Let
$$\Theta_{1}=\text{left-g.c.d.}\{\theta_{1}I_{n}, \Theta\},$$
 then $D(\Theta_{1}) = \theta_{1}$.
\end{corollary}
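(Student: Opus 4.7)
The proof breaks into two divisibilities. First, $D(\Theta_1)\mid\theta_1$ is immediate: by definition of the left-g.c.d., $\Theta_1$ left-divides $\theta_1 I_n$, so $\theta_1 I_n = \Theta_1\Psi$ for some $\Psi\in H^\infty(\mathbb{M}_n)$, exhibiting $\theta_1$ as a scalar inner multiple of $\Theta_1$ and yielding $D(\Theta_1)\mid\theta_1$.

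For the reverse divisibility I would first treat the case $G(\Theta) = 1$. Using Definition \ref{sdef1}, write $\Theta\Theta' = D(\Theta) I_n = \theta_1\theta_2 I_n$; by Lemma \ref{shly7}, $G(\Theta') = 1$, and since $\Theta$ is two-sided inner the identity $\Theta'\Theta = \theta_1\theta_2 I_n$ also holds. Apply Remark \ref{shre3} to the product $\Theta'\Theta$ with scalar factor $\theta_1$: since both $\Theta'$ and $\Theta$ have no nonconstant scalar inner factors, the remark supplies a right factor $\Theta_r$ of $\Theta'$ such that $\Theta_r\Theta_1 = \theta_1 I_n$, where $\Theta_1 = \text{left-g.c.d.}\{\theta_1 I_n,\Theta\}$. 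Because $G(\Theta_r) = 1$ (inherited from $G(\Theta')=1$) and $G(\Theta_1) = 1$ (inherited from $G(\Theta)=1$), Lemma \ref{shly9}(2) forces $\Theta_r\Theta_1 = D(\Theta_1) I_n$, whence $D(\Theta_1) = \theta_1$.

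The general case follows by reduction to $G(\Theta) = 1$. Write $\Theta = \omega\hat\Theta$ with $\omega = G(\Theta)$ and $G(\hat\Theta) = 1$; a routine comparison of scalar inner multiples gives $D(\Theta) = \omega D(\hat\Theta)$, so $D(\hat\Theta) = \theta_1\theta_2/\omega$. Set $\gamma = \gcd(\omega,\theta_1)$, $\omega = \gamma\omega'$, $\theta_1 = \gamma\theta_1'$ with $\gcd(\omega',\theta_1') = 1$; from $\omega\mid\theta_1\theta_2$ coprimality forces $\omega'\mid\theta_2$, so $D(\hat\Theta) = \theta_1'\cdot(\theta_2/\omega')$. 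The subspace identity
\[
\theta_1 H^2(\mathbb{C}^n)\vee\omega\hat\Theta H^2(\mathbb{C}^n) = \gamma\bigl[\theta_1' H^2(\mathbb{C}^n)\vee\omega'\hat\Theta H^2(\mathbb{C}^n)\bigr]
\]
gives $\Theta_1 = \gamma\tilde\Theta_1$ with $\tilde\Theta_1 = \text{left-g.c.d.}\{\theta_1' I_n,\omega'\hat\Theta\}$, and Lemma \ref{shly29}(1) applied with the coprimality $\gcd(\omega',D(\tilde\Theta_1)) = 1$ rewrites this as $\tilde\Theta_1 = \text{left-g.c.d.}\{\theta_1' I_n,\hat\Theta\}$. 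Applying the $G = 1$ case already proved to $\hat\Theta$ yields $D(\tilde\Theta_1) = \theta_1'$ (the degenerate subcases $\theta_1' = 1$ or $\theta_2/\omega' = 1$ are trivial from the definitions), and since $G(\tilde\Theta_1) = 1$ is inherited from $\hat\Theta$, the identity $D(\gamma\tilde\Theta_1) = \gamma D(\tilde\Theta_1)$ delivers $D(\Theta_1) = \gamma\theta_1' = \theta_1$. The main obstacle is exactly this reduction: Remark \ref{shre3} demands $G = 1$ on \emph{both} factors of the product, which $\Theta$ need not satisfy, so one must peel off the scalar part $\omega$ and carefully track the coprimality of $\omega'$ with $\theta_1'$ (for the Lemma \ref{shly29}(1) step) and of $\omega$ with $\theta_1\theta_2$ (to split $D(\hat\Theta)$ in the form required).
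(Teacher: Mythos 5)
Your proof is correct and follows essentially the same route as the paper: reduce to the case $G(\Theta)=1$ by peeling off the greatest scalar inner factor (the paper states this reduction in one line, while you fill in the details), and then apply Lemma \ref{shly25}/Remark \ref{shre3} to the factorization $\Theta'\Theta=\theta_{1}\theta_{2}I_{n}$, your extra appeal to Lemma \ref{shly9}(2) merely re-deriving the ``Moreover'' clause of Lemma \ref{shly25}. One small citation quibble: the step $\text{left-g.c.d.}\{\theta_{1}'I_{n},\omega'\hat\Theta\}=\text{left-g.c.d.}\{\theta_{1}'I_{n},\hat\Theta\}$ is what Lemma \ref{shly6} gives (take $\Delta=\omega'I_{n}$ and use that scalars commute, so $\omega'\hat\Theta=\hat\Theta(\omega'I_{n})$), rather than Lemma \ref{shly29}(1), which concerns scalar inner factors of a product and not general common left divisors.
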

\begin{proof}
If $\Theta$ has scalar inner factor, let $\Theta=G(\Theta) \Lambda$, where $G(\Lambda)=1$. Let $\omega=\text{g.c.d.}\{ \theta_{1},G(\Theta)\}$,  $\omega_{1}=\theta_{1}\omega^{\ast}$ and $\Lambda_{1}=\text{left-g.c.d.}\{\omega_{1}I_{n}, \Lambda\}$.  Then the conclusion is equivalent to  $D(\Lambda_{1}) = \omega_{1}$.
So without lost of generality, we can assume that $\Theta$ has no nonconstant scalar inner factors, i.e., $G(\Theta)=1$.
Write $D(\Theta)I_{n}=\Theta \Theta'$, then the conclusion follows by Lemma \ref{shly25}(take $\Delta=\Theta', \theta=\theta_{1}$).



\end{proof}

\subsection{The proof of Theorem \ref{shly10}}
Before we prove Theorem \ref{shly10}, we need two lemmas.
\begin{lemma}\label{shly4}
Suppose  $\Theta$ is a two-sided inner function with values in  $\mathbb{M}_{n}$ and $\theta$ is a nonconstant scalar inner function.  Let $\delta=\text{g.c.d}\{\theta, D(\Theta)\}$, then
$$\text{left-g.c.d.}\{\theta I_{n}, \Theta\}=\text{left-g.c.d.}\{\delta I_{n}, \Theta\}.$$
\end{lemma}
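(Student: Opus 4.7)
The plan is to reduce everything to a statement about invariant subspaces, using that $\text{left-g.c.d.}\{A,B\}\, H^2(\mathbb{C}^n) = A\, H^2(\mathbb{C}^n) \vee B\, H^2(\mathbb{C}^n)$. Set $\Omega_1 := \text{left-g.c.d.}\{\theta I_n, \Theta\}$ and $\Omega_2 := \text{left-g.c.d.}\{\delta I_n, \Theta\}$; the goal is to show $\Omega_1 H^2(\mathbb{C}^n) = \Omega_2 H^2(\mathbb{C}^n)$.

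One direction is immediate: since $\delta$ divides $\theta$, we have $\theta I_n H^2(\mathbb{C}^n) \subset \delta I_n H^2(\mathbb{C}^n)$, so joining with $\Theta H^2(\mathbb{C}^n)$ on both sides yields $\Omega_1 H^2(\mathbb{C}^n) \subset \Omega_2 H^2(\mathbb{C}^n)$, i.e., $\Omega_2$ is a left inner divisor of $\Omega_1$.

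For the reverse inclusion, the main idea is to pin down $D(\Omega_1)$. First, since $\Omega_1$ is a left inner divisor of $\theta I_n$, the function $\theta$ is a scalar inner multiple of $\Omega_1$, so $D(\Omega_1)$ divides $\theta$. Next, $\Omega_1$ is a left inner divisor of $\Theta$, say $\Theta = \Omega_1 \Phi_1$. Writing $D(\Theta) I_n = \Theta \Theta' = \Omega_1 (\Phi_1 \Theta')$ exhibits $D(\Theta)$ as a scalar inner multiple of $\Omega_1$, so $D(\Omega_1)$ also divides $D(\Theta)$. Therefore $D(\Omega_1)$ divides $\gcd(\theta, D(\Theta)) = \delta$, which means $\delta$ is itself a scalar inner multiple of $\Omega_1$; hence $\Omega_1$ is a left inner divisor of $\delta I_n$. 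This gives $\delta I_n H^2(\mathbb{C}^n) \subset \Omega_1 H^2(\mathbb{C}^n)$, and combined with $\Theta H^2(\mathbb{C}^n) \subset \Omega_1 H^2(\mathbb{C}^n)$ we obtain $\Omega_2 H^2(\mathbb{C}^n) \subset \Omega_1 H^2(\mathbb{C}^n)$. The two inclusions together force $\Omega_1 = \Omega_2$ up to a unitary constant right factor.

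There is no genuine obstacle here; the only subtle point is the inequality $D(\Omega_1) \mid D(\Theta)$, and that drops out of the identity $D(\Theta)I_n = \Omega_1(\Phi_1\Theta')$ together with the minimality of $D(\Omega_1)$. Note that Lemma~\ref{shly33} implicitly ensures that the left inner divisors of the two-sided inner function $\Theta$ are themselves two-sided inner, so that $D(\Omega_1)$ is well-defined.
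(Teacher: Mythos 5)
Your argument is correct, and it closes the proof by a noticeably lighter route than the paper does. Both proofs share the same central observation: since $\Omega_1=\text{left-g.c.d.}\{\theta I_n,\Theta\}$ left-divides both $\theta I_n$ and $\Theta$, the identities $\theta I_n=\Omega_1(\Omega_1^{\ast}\theta)$ and $D(\Theta)I_n=\Omega_1(\Phi_1\Theta')$ show that $D(\Omega_1)$ divides both $\theta$ and $D(\Theta)$, hence divides $\delta$. The paper, however, then invokes Corollary~\ref{shly3} (which rests on the more technical Lemma~\ref{shly25}) to pin down $D(\text{left-g.c.d.}\{\delta I_n,\Theta\})=\delta$ exactly, deduces that the two candidates have the same minimal scalar inner multiple, and finishes with a coprimality argument showing $\Theta_2^{\ast}D(\Theta_2)$ and $\Theta_2^{\ast}\Theta$ are left coprime, so that $\Theta_2$ is itself the left-g.c.d.\ of $\{\delta I_n,\Theta\}$. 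You bypass all of that: from $D(\Omega_1)\mid\delta$ you conclude directly that $\Omega_1$ is a common left inner divisor of $\delta I_n$ and $\Theta$, hence left-divides $\Omega_2$, and the two subspace inclusions force equality via Beurling--Lax--Halmos uniqueness. What your approach buys is independence from Corollary~\ref{shly3} and the coprimality step, using only the definition of $D(\cdot)$ (including the fact, recorded in Definition~\ref{sdef1}, that $D(\Omega_1)$ is itself a scalar inner multiple of $\Omega_1$) and the characterization of the left-g.c.d.\ as the join of invariant subspaces; the paper's longer route has the side benefit of computing $D$ of the common value exactly, which is information it reuses elsewhere.
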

\begin{proof}
Let
$$\Theta_{1}=\text{left-g.c.d.}\{\delta I_{n}, \Theta\}~\text{and}~\Theta_{2}=\text{left-g.c.d.}\{\theta I_{n}, \Theta\}, $$
then $\Theta_{1}$ is a left inner factor of $\Theta_{2}$. Next, we show that $$\Theta_{1} = \Theta_{2}.$$
By Lemma \ref{shly2},
$D(\Theta_{1})$ is a factor of $D(\Theta_{2})$, and $D(\Theta_{2})$ is a common inner factor of $\theta$ and $D(\Theta)$, which implies that $D(\Theta_{2})$ is a factor of $\delta$. By Corollary \ref{shly3}, $D(\Theta_{1})=\delta$, then $D(\Theta_{1})=D(\Theta_{2})$. Since $\Theta_{2}$ is the greatest common left inner divisor of $\theta$ and $\Theta$, then $\Theta_{2}^{\ast} \theta$ and $\Theta_{2}^{\ast} \Theta$ are left coprime. Since $\Theta_{2}^{\ast} D(\Theta_{2})$ is a left inner factor of $\Theta_{2}^{\ast} \theta$, then $\Theta_{2}^{\ast} D(\Theta_{2})$ and $\Theta_{2}^{\ast} \Theta$ are left coprime. Therefore,
$$\Theta_{2}= \text{left-g.c.d.}\{D(\Theta_{2}) I_{n}, \Theta\}=\text{left-g.c.d.}\{\delta I_{n}, \Theta\}=\Theta_{1}.$$

\end{proof}


\begin{lemma}\label{shly6}
Suppose $\theta$ is a nonconstant scalar inner function and $\Theta, \Delta $ are two-sided inner functions with values in  $\mathbb{M}_{n}$. If $\theta$ and $D(\Delta)$ are  coprime, then
\begin{equation}\label{shf1}
 \text{left-g.c.d}\{\theta I_{n} , \Theta \Delta\}= \text{left-g.c.d}\{\theta I_{n}, \Theta\}.
\end{equation}
\end{lemma}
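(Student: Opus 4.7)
The plan is to show the easy direction and then reduce the hard direction to an application of Lemma \ref{shly29}(2) after passing to scalar inner multiples. Set $\Omega_{1}:=\text{left-g.c.d.}\{\theta I_{n},\Theta\Delta\}$ and $\Omega_{2}:=\text{left-g.c.d.}\{\theta I_{n},\Theta\}$. Because $\Theta$ is a left inner divisor of $\Theta\Delta$, any common left inner divisor of $\theta I_{n}$ and $\Theta$ is automatically a common left inner divisor of $\theta I_{n}$ and $\Theta\Delta$, so $\Omega_{2}$ is a left inner divisor of $\Omega_{1}$; write $\Omega_{1}=\Omega_{2}\Omega_{3}$, with $\Omega_{3}$ two-sided inner by Lemma \ref{shly33}. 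It suffices to prove that $\Omega_{3}$ left-divides $\Theta_{0}$, where $\Theta=\Omega_{2}\Theta_{0}$: then $\Omega_{1}=\Omega_{2}\Omega_{3}$ left-divides $\Theta$, and since $\Omega_{1}$ also left-divides $\theta I_{n}$, it follows that $\Omega_{1}$ left-divides $\Omega_{2}$, whence the desired equality.

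Next I extract scalar inner factors on both sides. Factor $\theta I_{n}=\Omega_{2}\Omega_{2}^{\dagger}$; since $\Omega_{1}$ left-divides $\theta I_{n}$, $\Omega_{3}$ left-divides $\Omega_{2}^{\dagger}$. Using $\Omega_{2}^{\dagger}\Omega_{2}=\theta I_{n}$ (the remark after Definition \ref{sdef1}), this shows $\theta$ is a scalar inner multiple of $\Omega_{3}$, hence $D(\Omega_{3})$ divides $\theta$; combined with the hypothesis, $D(\Omega_{3})$ and $D(\Delta)$ are coprime. On the other hand, $\Omega_{1}$ left-divides $\Theta\Delta=\Omega_{2}\Theta_{0}\Delta$, so $\Omega_{3}$ left-divides $\Theta_{0}\Delta$; write $\Theta_{0}\Delta=\Omega_{3}\Psi$ with $\Psi$ two-sided inner. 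Letting $\Omega_{3}''$ denote the two-sided inner function with $\Omega_{3}\Omega_{3}''=\Omega_{3}''\Omega_{3}=D(\Omega_{3})I_{n}$, premultiplication gives
$$\Omega_{3}''\Theta_{0}\Delta \;=\; D(\Omega_{3})\,\Psi,$$
so $D(\Omega_{3})$ is a scalar inner factor of the two-sided inner product $\Omega_{3}''\Theta_{0}\Delta$.

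The key step now invokes Lemma \ref{shly29}(2): since $D(\Omega_{3})$ is coprime to $D(\Delta)$, that lemma strips off the right factor $\Delta$ and yields that $D(\Omega_{3})$ is a scalar inner factor of $\Omega_{3}''\Theta_{0}$. Writing $\Omega_{3}''\Theta_{0}=D(\Omega_{3})\Xi$ and premultiplying by $\Omega_{3}$ gives $D(\Omega_{3})\Theta_{0}=D(\Omega_{3})\Omega_{3}\Xi$, and cancelling the scalar factor yields $\Theta_{0}=\Omega_{3}\Xi$, as required. I expect the main obstacle in the plan to be exactly this isolation of $\Omega_{3}$ from the mixed product $\Theta_{0}\Delta$; it is precisely the coprimality of $\theta$ and $D(\Delta)$, transmitted via $D(\Omega_{3})\mid\theta$, that lets Lemma \ref{shly29}(2) legitimately strip the factor $\Delta$ on the right.
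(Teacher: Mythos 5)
Your proof is correct and follows essentially the same route as the paper's: both write the larger left-g.c.d.\ as the smaller one times a two-sided inner quotient, show that $D$ of that quotient divides $\theta$ (hence is coprime to $D(\Delta)$), and then invoke Lemma \ref{shly29}(2) to strip $\Delta$ from the right and conclude that the quotient already left-divides $\Theta$. The only difference is a streamlining: the paper first replaces $\theta$ by $\delta=\text{g.c.d.}\{\theta,D(\Theta)\}$ via Lemma \ref{shly4} and Corollary \ref{shly3} before running this argument, whereas you work with $\theta$ directly, which is equally valid (modulo the trivial case $D(\Omega_{3})=1$, where $\Omega_{3}$ is already a unitary constant and there is nothing to prove).
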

\begin{proof}
  By Lemma \ref{shly2}, there exist scalar inner functions $\theta_{1}$ and $\theta_{2}$ such that
$$D(\Theta \Delta) =\theta_{1} D(\Delta)=D(\Theta)\theta_{2},$$ and $\theta_{1},\theta_{2}$ are factors of $D(\Theta)$ and $D(\Delta)$, respectively.
Since $\theta$ and $D(\Delta)$ are coprime, then
\begin{equation}\label{shf3}
  \text{g.c.d.}\{\theta, D(\Theta \Delta)\}=\text{g.c.d.}\{\theta, \theta_{1}D(\Delta)\}=\text{g.c.d.}\{\theta, \theta_{1}\}=\text{g.c.d.}\{\theta, D(\Theta)\}.
\end{equation}
Let $\delta=\text{g.c.d.}\{\theta, D(\Theta)\}$. Using Lemma \ref{shly4} and $(\ref{shf3})$, we have
\begin{align*}
 &\text{left-g.c.d.}\{\theta I_{n}, \Theta \Delta\} = \text{left-g.c.d.}\{\delta I_{n}, \Theta \Delta\},\\
&\text{left-g.c.d.}\{\theta I_{n}, \Theta \} = \text{left-g.c.d.}\{\delta I_{n}, \Theta \}.
\end{align*}
Let
\begin{equation}\label{shf7}
  \Theta_{1}=\text{left-g.c.d.}\{\delta I_{n}, \Theta \Delta\}, ~\Theta_{2}=\text{left-g.c.d.}\{\delta I_{n}, \Theta\}.
\end{equation}
Next, we will prove that $\Theta_{1}=\Theta_{2}$.
By Corollary \ref{shly3}, $D(\Theta_{1})=D(\Theta_{2})=\delta$. It is clear that $\Theta_{2}$ is a left inner divisor of $\Theta_{1}$. Let $$\Theta_{1}=\Theta_{2}\Theta_{0},$$
then $\Theta_{0}$ is a left inner divisor of $\Theta_{2}^{\ast} \delta$.
We will now show that $\Theta_{0}$ is a constant unitary matrix.

By Lemma \ref{shly2}, $D(\Theta_{0})$ is a factor of $\delta$. By $(\ref{shf7})$, there exist two-sided inner functions $\Theta_{3}$ and $\Omega$ such that
$$\Theta=\Theta_{2} \Theta_{3}, \Theta \Delta= \Theta_{1}\Omega,$$
which gives us
$$ \Theta_{2} \Theta_{3} \Delta=\Theta \Delta= \Theta_{1}\Omega=\Theta_{2}\Theta_{0}\Omega.$$
Hence $\Theta_{3} \Delta=\Theta_{0}\Omega$. Write $D(\Theta_{0})I_{n}=\Theta_{0}\Theta_{0}'$, then we get
$$\Theta_{0}' \Theta_{3} \Delta=\Theta_{0}'\Theta_{0}\Omega=D(\Theta_{0}) \Omega.$$
Since $\theta$ and $D(\Delta)$ are coprime and $D(\Theta_{0})$ is a factor of $\delta=\text{g.c.d.}\{\theta, D(\Theta)\}$, then $D(\Theta_{0})$ and $D(\Delta)$ are also coprime. By Lemma \ref{shly29}, $D(\Theta_{0})$ is a scalar inner factor of $\Theta_{0}' \Theta_{3}$, then exists a two-sided inner function $\Lambda$ such that
$$\Theta_{0}' \Theta_{3}= D(\Theta_{0}) \Lambda=\Theta_{0}' \Theta_{0} \Lambda . $$
Thus $\Theta_{0}$ is a left inner factor of $\Theta_{3}=\Theta_{2}^{\ast} \Theta$. Hence $\Theta_{0}$ is a common left inner divisor of $\Theta_{2}^{\ast} \delta$ and $\Theta_{2}^{\ast} \Theta$.
Since $\Theta_{2}$ is the greatest common left inner divisor of $\delta$ and $\Theta$, we conclude that $\Theta_{0}$ is a constant unitary matrix. Therefore, $\Theta_{1}=\Theta_{2}$. 

\end{proof}

%

\begin{theorem}\label{shly10}
Suppose $\Delta_{i}, i=1,2$ are two-sided inner functions with values in  $\mathbb{M}_{n}$ without nonconstant scalar inner factors and $D(\Delta_{i})=\delta, i=1,2$, where $\delta$ is a scalar inner function. If $\theta$ is a scalar inner function and
\begin{equation}\label{shf9}
  \text{left-g.c.d}\{\theta I_{n}, \Delta_{1} \Delta_{2}\}=\text{left-g.c.d}\{\theta I_{n}, \delta\Delta_{1}  \}=\Omega,
\end{equation}
then $\Omega$ has the form $\omega I_{n}$ for some scalar-valued inner function $\omega$. Moreover,
$$\omega =\text{g.c.d.}\{\theta, \delta\},$$ and $\omega^{\ast} \theta$ and $\delta $ are coprime.

\end{theorem}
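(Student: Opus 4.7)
The plan is to identify $\Omega$ with $\omega_0 I_n$ where $\omega_0 := \text{g.c.d.}(\theta,\delta)$. Writing $\theta = \omega_0\theta_0$ and $\delta = \omega_0\delta_0$ (so $\text{g.c.d.}(\theta_0,\delta_0) = 1$), I would first observe that $\omega_0 I_n$ is a common left inner divisor of $\theta I_n$ and $\delta\Delta_1$, hence $\omega_0 I_n \mid \Omega$. A short computation via Lemma~\ref{shly29}(1), reducing the problem to a ``coprime residue'' and using $G(\Delta_1) = 1$, shows $G(\delta\Delta_1) = \delta$; combined with $\Omega \mid \theta I_n$ this forces $G(\Omega) \mid \text{g.c.d.}(\theta,\delta) = \omega_0$, hence $G(\Omega) = \omega_0$. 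So I may write $\Omega = \omega_0\Omega_1$ with $G(\Omega_1) = 1$, reducing the whole theorem to showing $\Omega_1 = I_n$.

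\textbf{Collapsing the three conclusions to a single scalar equation.} Dividing the second hypothesis through by $\omega_0$ yields $\Omega_1 = \text{left-g.c.d.}\{\theta_0 I_n,\delta_0\Delta_1\}$. A direct computation gives $D(\delta_0\Delta_1) = \delta_0\delta$, and Lemma~\ref{shly4} reduces this to $\Omega_1 = \text{left-g.c.d.}\{\beta I_n,\delta_0\Delta_1\}$, where $\beta := \text{g.c.d.}(\theta_0,\delta_0\delta) = \text{g.c.d.}(\theta_0,\omega_0)$ (using $\text{g.c.d.}(\theta_0,\delta_0) = 1$). By Lemma~\ref{shly1}, $\Omega_1 = I_n$ iff $\text{g.c.d.}(\beta,D(\delta_0\Delta_1)) = 1$, which is equivalent to $\beta = 1$. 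A parallel identity shows that the coprimality conclusion $\text{g.c.d.}(\theta_0,\delta) = 1$ is also equivalent to $\beta = 1$. Thus all three stated conclusions collapse to the single statement $\beta = 1$.

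\textbf{Forcing $\beta = 1$ --- the main obstacle.} Here the first hypothesis, untouched so far, becomes essential. Because $\omega_0 I_n \mid \Omega \mid \Delta_1\Delta_2$, $\omega_0$ is a scalar inner factor of $\Delta_1\Delta_2$, and Lemma~\ref{shly25} produces factorizations $\Delta_1 = \Delta_{1,l}\Delta_{1,r}$, $\Delta_2 = \Delta_{2,l}\Delta_{2,r}$ with $\Delta_{1,r}\Delta_{2,l} = \omega_0 I_n$, $D(\Delta_{1,r}) = D(\Delta_{2,l}) = \omega_0$, and no scalar factors in any of the four pieces. Hence $\Delta_1\Delta_2 = \omega_0\Delta_{1,l}\Delta_{2,r}$, and a telescoping parallel to the one above yields a second expression $\Omega_1 = \text{left-g.c.d.}\{\theta_0 I_n,\Delta_{1,l}\Delta_{2,r}\}$. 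By Lemma~\ref{shly2} one has $\delta_0 \mid D(\Delta_{1,l})$ and $\delta_0 \mid D(\Delta_{2,r})$, while $\text{g.c.d.}(\beta,\delta_0) = 1$ because $\beta \mid \theta_0$. Assuming $\beta > 1$ for contradiction, I would then use Lemma~\ref{shly29} to transport the $\beta$-factor across $\Delta_{1,l}\Delta_{2,r}$ --- legitimate because $\beta$ is coprime to the $\delta_0$-part of each $D(\Delta_{i,\ast})$ --- producing a scalar inner factor of $\Delta_1\Delta_2$ strictly larger than $\omega_0$, contradicting the maximality of the Lemma~\ref{shly25} extraction. I expect this last scalar-transfer step, the only point at which the two hypotheses genuinely interact, to be the technical heart of the proof and the primary place where care is required.
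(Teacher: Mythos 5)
Your first two paragraphs are essentially sound and run parallel to the paper's own reduction: extracting $\omega_0=\text{g.c.d.}\{\theta,\delta\}$, writing $\Omega=\omega_0\Omega_1$, splitting $\Delta_1\Delta_2=\omega_0\Delta_{1,l}\Delta_{2,r}$ via Lemma \ref{shly25}, and collapsing all three conclusions to the single scalar statement $\beta=1$ (your $\beta$ is the paper's $\delta_1$). The genuine gap is in your third paragraph, which you rightly flag as the heart of the matter but which does not work as sketched. First, Lemma \ref{shly25} carries no maximality clause to contradict: $G(\Delta_1\Delta_2)$ can legitimately exceed $\omega_0$ (with $\theta$ and $\delta$ coprime one may take $\Delta_2=\Delta_1'$, so that $\Delta_1\Delta_2=\delta I_n$ while $\omega_0=1$). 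A scalar factor $\sigma$ of $\Delta_1\Delta_2$ with $\omega_0\mid\sigma\mid\theta$ and $\sigma\neq\omega_0$ \emph{would} contradict the second hypothesis (then $\sigma I_n\mid\Omega\mid\delta\Delta_1$ forces $\sigma\mid\delta$, hence $\sigma\mid\omega_0$), so your target is salvageable; but your production of such a $\sigma=\omega_0\beta$ is unjustified. Lemma \ref{shly29} transports a \emph{scalar} inner factor, so invoking it requires $\beta I_n$ to already be a left inner divisor of $\Delta_{1,l}\Delta_{2,r}$. All you actually have is that the matrix inner function $\Omega_1$, with $D(\Omega_1)=\beta$, divides it; and $D(\Omega_1)=\beta$ does not give $\beta I_n\mid\Omega_1$ --- indeed $\beta I_n\mid\Omega_1$ together with $D(\Omega_1)=\beta$ would force $\Omega_1=\beta I_n$, which is essentially the scalar-form conclusion you are trying to prove, so the step is close to circular.

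The paper closes this gap by a different mechanism. Using $D(\Delta_{2,r})=\omega_0^{\ast}\delta$ (your $\delta_0$) and Lemma \ref{shly6}, the two sides of the hypothesis reduce to $\omega_0\,\text{left-g.c.d.}\{\beta I_n,\Delta_{1,l}\}$ and $\omega_0\,\text{left-g.c.d.}\{\beta I_n,\Delta_{1}\}$ respectively, and one then shows these must \emph{differ} when $\beta\neq 1$: writing $D(\Delta_1)I_n=\Delta_1'\Delta_1$, Remark \ref{shre3} supplies a right factor $\Omega_1'$ of $\Delta_1'$ with $\Omega_1'\cdot\text{left-g.c.d.}\{\beta I_n,\Delta_1\}=\beta I_n$, and equality of the two g.c.d.'s would force $\beta$ to divide $\omega_0^{\ast}\delta$, which is coprime to $\beta$ since $\beta\mid\omega_0^{\ast}\theta$; hence $\beta=1$, a contradiction. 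Some argument of this kind --- comparing the $\beta$-content of $\Delta_{1,l}$ with that of $\Delta_1$, which differ precisely because part of it sits in $\Delta_{1,r}$ --- is what your sketch is missing.
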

\begin{proof}
Let $\delta_{0}=\text{g.c.d.}\{\theta, \delta\}$.
The following proofs are divided into two cases: Case I, $\delta_{0} =1$; Case II, $\delta_{0}\neq 1$.

For Case I. By Lemma \ref{shly2}, $D( \Delta_{1} \Delta_{2})$ is a factor of $\delta^{2}$. Since $\theta$ and $\delta$ are coprime, then $\theta$ and $\delta^{2}$ are coprime. By Lemma \ref{shly1}, $\theta I_{n}$ and $ \Delta_{1} \Delta_{2}$ are left coprime, then $\Omega=I_{n}(=\delta_{0}I_{n})$.

For Case II. Since $\delta_{0} \neq 1$, let $\delta_{1}=\text{g.c.d.}\{\delta_{0}^{\ast}\theta, \delta\}$. Next, we will show that $\delta_{1}=1$ and $\Omega=\delta_{0}I_{n}$.

Since $\delta_{0}^{\ast} \theta$ and $\delta_{0}^{\ast} \delta$ are coprime, then $\delta_{1}$ is a factor of $\delta_{0}$, and $\delta_{1}$ and $\delta_{0}^{\ast} \delta$ are coprime. It is clear that $D(\delta\Delta_{1})=\delta^{2}$, then
$$\text{g.c.d.}\{\theta, D( \delta \Delta_{1})\}=\text{g.c.d.}\{\theta, \delta^{2}\}=\delta_{0} \delta_{1}.$$
By Lemma \ref{shly4},
\begin{equation}\label{shf18}
  \begin{split}
   \text{left-g.c.d.}\{\theta I_{n}, \delta \Delta_{1}\}
   &=\text{left-g.c.d.}\{\delta_{0} \delta_{1} I_{n},\delta \Delta_{1}\}\\
   &=\delta_{0}\text{left-g.c.d.}\{ \delta_{1} I_{n}, \Delta_{1}(\delta^{\ast}_{0}\delta)\}\\
   &=\delta_{0}\text{left-g.c.d.}\{ \delta_{1} I_{n}, \Delta_{1}\}(\text{$\delta_{1}$ and $\delta_{0}^{\ast} \delta$ are coprime}).
  \end{split}
\end{equation}
By $(\ref{shf9})$ and $(\ref{shf18})$,
$\delta_{0}$ is a scalar inner factor of $\Delta_{1}\Delta_{2} $. By Lemma  \ref{shly25}, there exist $\Delta_{1l}, \Delta_{1r}, \Delta_{2l}, \Delta_{2r}$ such that
 $$\Delta_{1}=\Delta_{1l}\Delta_{1r},~~\Delta_{2}=\Delta_{2l} \Delta_{2r}~\text{and}~\Delta_{1r}\Delta_{2l}=\delta_{0}I_{n}.$$
Since $\Delta_{1}$ and $\Delta_{2}$ have no nonconstant scalar inner factors, then $\Delta_{1r},\Delta_{2l}$ have no nonconstant scalar inner factors. By Lemma \ref{shly9},
$$D(\Delta_{1r})=D(\Delta_{2l})=\delta_{0}.$$

Let $\Delta_{2l}=\text{left-g.c.d.}\{\delta_{0} I_{n},\Delta_{2}\}$ and $D(\Delta_{2})I_{n}=\Delta_{2}' \Delta_{2}$  by Remark \ref{shre3}, there exists   a right inner factor $ \Delta_{2r}'$ of $\Delta_{2}'$ such that $ \Delta_{2r}'\Delta_{2l}=\delta_{0}I_{n}$. This shows that $(\Delta_{2}' (\Delta_{2r}')^{\ast} )\Delta_{2r}=\delta_{0}^{\ast} \delta I_{n}$.
By Lemma \ref{shly7}, $\Delta_{2},\Delta_{2}'$ have no nonconstant scalar inner factor, by Lemma \ref{shly9}, $D(\Delta_{2r})=\delta_{0}^{\ast} \delta$.
 Since $\delta_{0}^{\ast} \theta$ and $\delta_{0}^{\ast} \delta$ are coprime, and $ D(\Delta_{2r})=\delta_{0}^{\ast} \delta$, then $\delta_{0}^{\ast} \theta$ and $\Delta_{2r}$ are left coprime. By Lemma \ref{shly6}, we get
 \begin{equation}\nonumber
   \text{left-g.c.d.}\{\theta I_{n}, \Delta_{1}\Delta_{2} \}
 =\delta_{0}~\text{left-g.c.d.}\{\delta_{0}^{\ast}\theta I_{n}, \Delta_{1l}\Delta_{2r} \}=\delta_{0}~\text{left-g.c.d.}\{\delta_{1} I_{n}, \Delta_{1l} \}.
 \end{equation}
By $(\ref{shf9})$, $(\ref{shf18})$,
\begin{equation}\label{shf11}
\begin{split}
  \text{left-g.c.d}\{\theta I_{n}, \delta \Delta_{1}  \}=\delta_{0}~\text{left-g.c.d.}\{ \delta_{1} I_{n}, \Delta_{1}\}.
\end{split}
\end{equation}
Next, we show that $(\ref{shf9})$ holds if and only $\delta_{1}=1$. To this, we only need to show that
if $\delta_{1}\neq 1$, then
$$\text{left-g.c.d.} \{\delta_{1} I_{n}, \Delta_{1l} \} \neq \text{left-g.c.d.}\{ \delta_{1} I_{n}, \Delta_{1}\}.$$
Let $D(\Delta_{1})I_{n}=\Delta_{1}'\Delta_{1}$ and
$$\Omega_{1}=\text{left-g.c.d}\{\delta_{1} I_{n}, \Delta_{1} \},~\Omega_{2}=\text{left-g.c.d}\{\delta_{1} I_{n}, \Delta_{1l} \}.$$
By Remark \ref{shre3}, there exists a right factor $\Omega_{1}'$ of $\Delta_{1}'$ such that $\Omega_{1}' \Omega_{1}=\delta_{1}I_{n}$.
However, if  $ \Omega_{1}'\Omega_{2}=\delta_{1}I_{n}$, we get
 $$ \delta I_{n}=\Delta_{1}'(\Omega_{1}')^{\ast} \Omega_{1}'\Omega_{2} \Omega_{2}^{\ast} \Delta_{1l}\Delta_{1r}=\delta_{1}(\Delta_{1}'(\Omega_{1}')^{\ast}) ( \Omega_{2}^{\ast} \Delta_{1l})\Delta_{1r}. $$
Since $D(\Delta_{1r})=\delta_{0}$, the above equation implies that $\delta_{1}$ is a factor of $\delta_{0}^{\ast} \delta$. By the definition of $\delta_{0}$ and $\delta_{1}$ , we have $g.c.d.\{\delta_{0}^{\ast} \theta, \delta_{0}^{\ast} \delta\}=1$ and $\delta_1$ is a factor of $\delta_{0}^*\theta$ , thus we get that $\delta_{1}$ and $\delta_{0}^{\ast} \delta$ are coprime.
Therefore, $\delta_{1}=1$. This contradicts the assumption  $\delta_{1}\neq 1$. Hence $\Omega_{1}'\Omega_{2}\neq \delta_{1} I_{n}$. This means $\Omega_{1}\neq \Omega_{2}$.

Combining the above arguments, we obtain that $(\ref{shf9})$ holds if and only if $\delta_{1}=1$ and $\Omega=\delta_{0}I_{n}$.

\end{proof}

\begin{corollary}\label{shly17}
Suppose $\Delta_{i}, i=1,2$ are two-sided inner functions with values in  $\mathbb{M}_{n}$ without nonconstant scalar inner factors and $D(\Delta_{i})=\delta, i=1,2$. If $\theta,\omega$ are scalar inner functions and
\begin{equation}\nonumber
 \text{left-g.c.d}\{\theta I_{n}, \Delta_{1} \Delta_{2}\}=\text{left-g.c.d}\{\theta I_{n}, \omega \delta \Delta_{1}  \}=\Omega,
\end{equation}
then
$\Omega =\text{g.c.d.}\{\theta,  \omega \delta \} I_{n}$, and $\Omega^{\ast} \theta$ and $\delta$ are coprime.

\end{corollary}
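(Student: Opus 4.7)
The plan is to reduce the statement directly to Theorem \ref{shly10} by absorbing the extra scalar factor $\omega$ appearing in $\omega\delta\Delta_1$. Set $\omega_0:=\text{g.c.d.}\{\theta,\omega\delta\}$ and $\delta_0:=\text{g.c.d.}\{\theta,\delta\}$; the pivotal observation will be that the hypothesis forces $\omega_0=\delta_0$, after which the conclusion flows from Theorem \ref{shly10} with essentially no further work.

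First I would note that $\omega_0 I_n$ is a common left inner divisor of $\theta I_n$ and $\omega\delta\Delta_1$, and therefore $\omega_0 I_n\mid\Omega\mid\Delta_1\Delta_2$. Assuming $\omega_0\neq 1$, Lemma \ref{shly25} applied with $\Theta=\Delta_1$, $\Delta=\Delta_2$, $\theta=\omega_0$ produces a right inner divisor $\Delta_{1r}$ of $\Delta_1$ with $D(\Delta_{1r})=\omega_0$; then Lemma \ref{shly2}(a) gives $D(\Delta_{1r})\mid D(\Delta_1)=\delta$, hence $\omega_0\mid\delta$. Combined with $\omega_0\mid\theta$ this yields $\omega_0\mid\delta_0$; the reverse divisibility $\delta_0\mid\omega_0$ is immediate from the definitions, so $\omega_0=\delta_0$.

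Next I would replace the hypothesis $\text{left-g.c.d.}\{\theta I_n,\omega\delta\Delta_1\}=\Omega$ with the cleaner identity $\text{left-g.c.d.}\{\theta I_n,\delta\Delta_1\}=\Omega$. Since $\omega_0=\delta_0$ divides both $\theta$ and $\delta$, the scalar $\omega_0 I_n$ factors out of both left-g.c.d.'s, so it suffices to establish
\[
\text{left-g.c.d.}\{(\theta/\omega_0)I_n,(\omega\delta/\omega_0)\Delta_1\}=\text{left-g.c.d.}\{(\theta/\omega_0)I_n,(\delta/\omega_0)\Delta_1\}.
\]
Writing $(\omega\delta/\omega_0)\Delta_1=\bigl((\delta/\omega_0)\Delta_1\bigr)(\omega I_n)$ (the scalar $\omega I_n$ commutes past everything) and invoking Lemma \ref{shly6}, this equality reduces to the coprimality $\text{g.c.d.}\{\theta/\omega_0,\omega\}=1$. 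A short valuation check secures this: for any prime $p$ with $v_p(\omega)>0$, the alternative $v_p(\omega_0)=v_p(\omega)+v_p(\delta)>v_p(\delta)$ would contradict $\omega_0\mid\delta$, so instead $v_p(\omega_0)=v_p(\theta)$, giving $v_p(\theta/\omega_0)=0$.

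With $\text{left-g.c.d.}\{\theta I_n,\Delta_1\Delta_2\}=\text{left-g.c.d.}\{\theta I_n,\delta\Delta_1\}=\Omega$ established, Theorem \ref{shly10} applies directly and yields $\Omega=\delta_0 I_n=\omega_0 I_n=\text{g.c.d.}\{\theta,\omega\delta\}I_n$, along with coprimality of $\delta_0^{\ast}\theta=\Omega^{\ast}\theta$ with $\delta$. The main obstacle is the opening step: extracting $\omega_0\mid\delta$ from the otherwise innocuous-looking condition $\omega_0 I_n\mid\Delta_1\Delta_2$ is exactly where Lemma \ref{shly25} and the $\Delta_1\Delta_2$ side of the hypothesis play a genuinely nontrivial role; once $\omega_0=\delta_0$ is in hand, the rest of the argument is bookkeeping.
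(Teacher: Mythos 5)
Your proposal is correct, and it reaches the conclusion by a more modular route than the paper's. The paper's own proof also begins by showing that $\delta_0:=\text{g.c.d.}\{\theta,\omega\delta\}$ divides $\delta$ (it asserts this without detail; your derivation via Lemma \ref{shly25} and Lemma \ref{shly2}(a) is exactly the right justification), but it then sets $\delta_1=\text{g.c.d.}\{\delta_0^{\ast}\theta,\delta\}$, notes $\delta_1=\text{g.c.d.}\{\delta_0^{\ast}\theta,\delta_0\}$, and re-traces the proof of Theorem \ref{shly10} with these modified quantities. You instead prove the identity $\text{left-g.c.d.}\{\theta I_n,\omega\delta\Delta_1\}=\text{left-g.c.d.}\{\theta I_n,\delta\Delta_1\}$ by factoring out $\omega_0 I_n$ and discarding the scalar $\omega$ via Lemma \ref{shly6}, which turns the corollary's hypothesis literally into the hypothesis $(\ref{shf9})$ of Theorem \ref{shly10}, so the theorem can be cited as a black box rather than having its proof repeated. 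This buys a shorter and more transparent argument; the paper's version avoids justifying the extra reduction identity but at the cost of an opaque ``the rest follows from the proof of Theorem \ref{shly10}'' step.

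One presentational caveat: your coprimality check $\text{g.c.d.}\{\theta\omega_0^{\ast},\omega\}=1$ is phrased in terms of ``primes'' and valuations $v_p$, which literally makes sense only for the Blaschke part of an inner function (singular inner functions admit no prime factorization). The fact itself is true and has a one-line divisibility proof that avoids valuations: if $u=\text{g.c.d.}\{\theta\omega_0^{\ast},\omega\}$, then $u\omega_0$ divides $\theta$, and since $u\mid\omega$ and $\omega_0\mid\delta$ we also get $u\omega_0\mid\omega\delta$, whence $u\omega_0\mid\omega_0$ and $u=1$. With that substitution (and a word on the degenerate case $\theta=\omega_0$, where the reduction identity holds trivially because $\theta I_n$ then left-divides both $\omega\delta\Delta_1$ and $\delta\Delta_1$), your argument is complete.
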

\begin{proof}
Let $\delta_{0}=\text{g.c.d.}\{\theta,  \omega \delta \}$, then $\delta_{0}$ is the factor of $\delta$ because
$$\text{left-g.c.d}\{\theta I_{n}, \Delta_{1} \Delta_{2}\}=\text{left-g.c.d}\{\theta I_{n}, \omega\delta\Delta_{1} \}.$$
Let $\delta_{1}=\text{g.c.d.}\{\delta^{\ast}_{0}\theta,   \delta \}$ then $\delta_{1}=\text{g.c.d.}\{\delta^{\ast}_{0}\theta,  \delta_{0} \}$ because $\delta_{0}^{\ast} \theta$ and $\delta_{0}^{\ast} \delta \omega$ are coprime.
The rest proof follows from the proof of Theorem \ref{shly10}.
\end{proof}


\subsection{``Weak" commutativity of two-sided inner functions}


Suppose $\Theta, \Delta$ are two-sided inner functions with values in $\mathbb{M}_{n}$, then the commutative property
$$\Theta \Delta = \Delta \Theta$$ does not hold for $n> 1$. Therefore, it is natural to ask whether there exists ``weakly" commutative property.
\begin{question}\label{shq1}
Whether there exists a two-sided inner function $\Omega$ such that
\begin{equation}\label{shf45}
  \Theta \Delta= \Delta \Omega~\text{or}~\Theta \Delta=\Omega \Theta?
\end{equation}
\end{question}
 However, the following example demonstrates that $(\ref{shf45})$ doesn't hold in general. If $(\ref{shf45})$ were to hold, then we would have either

$$\Omega=\Delta^{\ast}\Theta \Delta ~\text{or}~\Omega=\Theta \Delta \Theta^{\ast}.$$
\begin{example}\label{shex1}
Let
\begin{equation}\label{shf46}
\Theta=\frac{\sqrt{2}}{2}\left(
  \begin{array}{cc}
    z^{3} & -z^{5}\\
    1 & z^{2} \\
  \end{array}
\right),
~\Delta=\left(
\begin{array}{cc}
    0 & 1 \\
     z& 0 \\
  \end{array}
\right),
\end{equation}
then
\begin{equation}\nonumber
\Delta^{\ast}\Theta \Delta=\frac{\sqrt{2}}{2}\left(
  \begin{array}{cc}
    z^{2} & \overline{z}\\
    -z^{6} & z^{3} \\
  \end{array}
\right),
~\Theta \Delta \Theta^{\ast}=\frac{1}{2}\left(
\begin{array}{cc}
    -z^{3}-\overline{z}^{2} & -z^{6}+z \\
     1-\overline{z}^{5}& z^{3}-\overline{z}^{2} \\
  \end{array}
\right).
\end{equation}
Since both $\Delta^{\ast}\Theta \Delta$ and $\Theta \Delta \Theta^{\ast}$ are not analytic, then $(\ref{shf45})$ doesn't hold. This implies that in the noncommunicative setting, $\Delta$ is not the left inner factor of $\Theta \Delta$ and $\Theta$ is not the right inner factor of $\Theta \Delta$. This is in contrast to the commutative setting where such factorizations exist.
\end{example}
In \cite{CHL2019}, the authors show that if $A$ and $B$ are left coprime and $A$ and $C$ are left coprime, where $A,B,C \in H^{\infty}(\mathbb{M}_{n})$ and $AB=BA$, then $A$ and $BC$ are also left coprime. However, if we remove the condition $AB=BA$, the result will no longer hold(see Example \ref{shex2}).
\begin{example}\label{shex2}
Let $A=\Delta$ and $B=C=\Theta$ in $(\ref{shf46})$. Since $\det(A)=-z$, then $A$ has no nontrivial divisors. It is clear that
\begin{equation}\nonumber
AB=\frac{\sqrt{2}}{2}\left(
  \begin{array}{cc}
    1 & z^{2}\\
   z^{4} & -z^{6} \\
  \end{array}
\right)\neq \frac{\sqrt{2}}{2}\left(
\begin{array}{cc}
    -z^{6} & -z^{3} \\
    z^{3}& 1 \\
  \end{array}
\right)=BA
\end{equation}
and
\begin{equation}\nonumber
A^{\ast}B=\frac{\sqrt{2}}{2}\left(
  \begin{array}{cc}
    \overline{z} & z\\
   z^{3} & -z^{5} \\
  \end{array}
\right),~ A^{\ast} BC=\frac{1}{2}\left(
\begin{array}{cc}
    z^{2}+z & -z^{4}+z^{3} \\
    z^{6}-z^{5}& -z^{8}-z^{7} \\
  \end{array}
\right).
\end{equation}
Since $A^{\ast} B$ is not analytic and $A$ has no nontrivial left inner divisor, then $A $ and $B$ are left coprime and $A $ and $C$ are left coprime. However, $A^{\ast} BC$ is analytic, which shows that $A$ is the left divisor of $BC$.

\end{example}
The following example shows that the unique decomposition of scalar inner functions does not hold for matrix-valued inner functions.
\begin{example}\label{shex3}
\begin{equation}\nonumber
  \left(
  \begin{array}{cc}
    z^{3} & -z^{5}\\
    1 & z^{2} \\
  \end{array}
\right)= \left(
  \begin{array}{cc}
    z &0\\
   0 & 1 \\
  \end{array}
\right)\left(
  \begin{array}{cc}
    z^{2} & -z^{4}\\
    1 & z^{2} \\
  \end{array}
\right)=\left(
  \begin{array}{cc}
    -z^{4} & z^{3}\\
    z & 1 \\
  \end{array}
\right)\left(
  \begin{array}{cc}
    0 & z\\
    1 & 0 \\
  \end{array}
\right).
\end{equation}
\end{example}

Example \ref{shex1}, Example \ref{shex2} and Example \ref{shex3} demonstrate some differences between the commutative setting and the noncommutative setting. Based on these observations, we modify Question \ref{shq1} as follows:
\begin{question}\label{shq3}
Suppose $\Theta$ and $ \Delta$ are two-sided inner functions with values in $\mathbb{M}_{n}$, whether there exist $\Theta_{1}, \Delta_{1}$ such that
\begin{equation}\label{shf50}
  \Theta \Delta=\Delta_{1} \Theta_{1},
\end{equation}
and $\Delta_{1}$ preserves some ``suitable" properties of $\Delta$? 
\end{question}

\begin{remark}\label{shre1}
Suppose $F\in H^{2}(\mathbb{M}_{n})$, then $F$ has the following inner-outer factorization of the form:
$$F=F_{li}F_{lo}=F_{ro}F_{ri},$$
where $F_{li}, F_{ri}$ are inner functions and $F_{lo}, F_{ro}$ are outer functions.
By the proof of Theorem 4.17 in \cite{CHL2019}, we have $D(F_{li})=D(F_{ri})$. By Helson-Lowdenslager Theorem in \cite{Nikolskii1986}, $\det(F_{li})=\det(F_{ri})$, where $\det$ denotes the determinant of a matrix.

\end{remark}
Enlightened by the inner-outer factorization, we study the decomposition has the form in $(\ref{shf50})$ with $D(\Delta_{1})=D(\Delta)$ or $\det(\Delta_{1}) =\det(\Delta)$.
By Example \ref{shex3}, we have
\begin{equation}\nonumber
D\Big(\left(
  \begin{array}{cc}
    z &0\\
   0 & 1 \\
  \end{array}
\right)\Big)=D\Big(\left(
  \begin{array}{cc}
    0 & z\\
    1 & 0 \\
  \end{array}
\right)\Big).
\end{equation}
Therefore, Example \ref{shex3} gives an affirmative answer to Question \ref{shq3}. And it inspires us to obtain the following results.

\begin{proposition}\label{shly14}
Suppose $\Theta$ and $ \Delta$ are two-sided inner functions with values in  $\mathbb{M}_{n}$, then the following statements hold:
\begin{description}
  \item[(a)] there exist two-sided inner functions $\Theta_{1}, \Delta_{1}$ satisfying $D(\Theta_{1})=D(\Theta)$ and $D(\Delta \Theta)=D(\Theta_{1})D(\Delta_{1})$ such that
$$\Delta \Theta= \Theta_{1}\Delta_{1}.$$
  \item[(b)] there exist two-sided inner functions $\Theta_{2}, \Delta_{2}$ satisfying $D(\Delta_{2})=D(\Delta)$ and $D(\Delta \Theta)=D(\Theta_{2})D(\Delta_{2})$ such that
$$\Delta \Theta=\Theta_{2}\Delta_{2}.$$
\item[(c)]$G(\Theta_{1})=G(\Theta)$;
  \item[(d)]  if $D(\Delta)$ and $D(\Theta)$ are coprime, then $\det(\Theta_{1})=\det(\Theta)=\det(\Theta_{2})$.
\end{description}
\end{proposition}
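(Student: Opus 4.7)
The plan is to prove (a) by an explicit construction of $\Theta_{1}$ as a left greatest common divisor, deduce (b) from (a) via the involution $F\mapsto\widetilde{F}$, and read (c) and (d) off the structure of that construction.

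For (a), I set $\Theta_{1}:=\text{left-g.c.d.}\{D(\Theta)I_{n},\,\Delta\Theta\}$. By Lemma~\ref{shly32} this is a nonzero two-sided inner function, and Lemma~\ref{shly33} guarantees that both $\Theta_{1}$ and its cofactor $\Delta_{1}:=\Theta_{1}^{\ast}\Delta\Theta$ are two-sided inner. The identity $D(\Theta_{1})=D(\Theta)$ follows from Corollary~\ref{shly3}, since Lemma~\ref{shly2}(a) gives $D(\Theta)\mid D(\Delta\Theta)$, so writing $D(\Delta\Theta)=D(\Theta)\gamma$ and applying the corollary to $\Delta\Theta$ yields the claim. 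For the multiplicative identity $D(\Delta\Theta)=D(\Theta_{1})D(\Delta_{1})$, one direction is Lemma~\ref{shly2}(b); for the reverse, I would combine $\Theta_{1}\Theta_{1}'=D(\Theta)I_{n}$ with $\Delta\Theta\,\Omega=D(\Delta\Theta)I_{n}$ to extract the relation $\Delta_{1}\Omega=\gamma\Theta_{1}'$, and then use the left coprimality of $\Theta_{1}'$ and $\Delta_{1}$ built into the left-g.c.d.\ definition together with Corollary~\ref{shly28} to conclude that $D(\Delta_{1})\mid\gamma$; combined with the easy divisibility $\gamma\mid D(\Delta_{1})$ that comes from Lemma~\ref{shly2}(b), this gives equality.

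For (b), I invoke the tilde involution: since $\widetilde{\Delta\Theta}=\widetilde{\Theta}\widetilde{\Delta}$ and $D(\widetilde{F})=\widetilde{D(F)}$ (Lemma 4.12 in \cite{CHL2019}), applying (a) to the pair $(\widetilde{\Delta},\widetilde{\Theta})$ with the roles of the two factors swapped yields $\widetilde{\Theta}\widetilde{\Delta}=\Phi\Psi$ with $D(\Phi)=D(\widetilde{\Delta})$ and the correct multiplicative identity. Taking tildes once more and setting $\Theta_{2}:=\widetilde{\Psi}$, $\Delta_{2}:=\widetilde{\Phi}$ produces the required decomposition $\Delta\Theta=\Theta_{2}\Delta_{2}$ with $D(\Delta_{2})=D(\Delta)$ and $D(\Delta\Theta)=D(\Theta_{2})D(\Delta_{2})$.

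For (c), the divisibility $G(\Theta)\mid G(\Theta_{1})$ is direct: writing $\Theta=G(\Theta)\Theta_{0}$, the factor $G(\Theta)I_{n}$ is a left inner divisor of $\Delta\Theta=G(\Theta)(\Delta\Theta_{0})$, and since $G(\Theta)\mid D(\Theta)$ also of $D(\Theta)I_{n}$, hence of $\Theta_{1}$. For the reverse, given a scalar inner factor $\alpha$ of $\Theta_{1}$, Lemma~\ref{shly29}(2) immediately yields $\alpha I_{n}\mid\Theta$ whenever $\alpha$ is coprime to $D(\Delta)$. The residual case, where $\alpha$ shares factors with $D(\Delta)$, is the main obstacle; it is vacuous under the coprime hypothesis of (d), and in general requires a finer inspection of how scalar factors are distributed between the two sides of the factorization $\Delta\Theta=\Theta_{1}\Delta_{1}$. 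For (d), invoking $\gcd(D(\Theta),D(\Delta))=1$, Lemma~\ref{shly2}(c) gives $D(\Delta\Theta)=D(\Theta)D(\Delta)$ and hence $D(\Delta_{1})=D(\Delta)$. Taking determinants in $\Theta_{1}\Theta_{1}'=D(\Theta)I_{n}$ shows $\det(\Theta_{1})\mid D(\Theta)^{n}$, and similarly $\det(\Theta)\mid D(\Theta)^{n}$; the analogous divisibilities hold on the $\Delta$ side. Coprimeness of $D(\Theta)$ and $D(\Delta)$ then makes $\det(\Theta_{1})$ and $\det(\Theta)$ coprime with $\det(\Delta_{1})$ and $\det(\Delta)$, so the identity $\det(\Theta_{1})\det(\Delta_{1})=\det(\Delta\Theta)=\det(\Theta)\det(\Delta)$ splits uniquely to yield $\det(\Theta_{1})=\det(\Theta)$, and the symmetric argument applied to (b) gives $\det(\Theta_{2})=\det(\Theta)$.
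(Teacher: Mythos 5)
Your skeleton matches the paper's: the same choice $\Theta_{1}=\text{left-g.c.d.}\{D(\Theta)I_{n},\Delta\Theta\}$, the same appeal to Corollary \ref{shly3} for $D(\Theta_{1})=D(\Theta)$, and your tilde-involution derivation of (b) is a fine substitute for the paper's ``similar argument.'' Your proof of (d) is genuinely different and shorter than the paper's page-long contradiction argument with $\omega,\omega_{0},\omega_{1},\omega_{2}$: splitting $\det(\Theta_{1})\det(\Delta_{1})=\det(\Theta)\det(\Delta)$ using that $\det\Theta_{1},\det\Theta$ divide $D(\Theta)^{n}$ while $\det\Delta_{1},\det\Delta$ divide $D(\Delta)^{n}$ is correct and cleaner.

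There are, however, two genuine gaps. The serious one is in (a): the hard half of $D(\Delta\Theta)=D(\Theta_{1})D(\Delta_{1})$, namely $D(\Delta_{1})\mid\gamma$, is not delivered by the tools you cite. From $\Delta_{1}\Omega=\gamma\Theta_{1}'$ you would need to cancel the \emph{matrix} inner function $\Theta_{1}'$ against the left-coprime factor $\Delta_{1}$ so as to conclude that $\Delta_{1}$ left-divides $\gamma I_{n}$; but Corollary \ref{shly28} only permits cancelling a \emph{scalar} inner function that is left coprime to one of the factors, and says nothing about this noncommutative cancellation. What your relation does give directly is $\Delta_{1}\Omega\Theta_{1}=D(\Theta)\gamma I_{n}$, hence only $D(\Delta_{1})\mid D(\Theta)\gamma$, which is too weak when $D(\Theta)$ and $\gamma$ share factors. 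The paper closes this step by a different device: writing $D(\Delta\Theta)I_{n}=\Omega(\Delta\Theta)$, it uses Remark \ref{shre3} to extract a right factor $\Omega_{r}$ of $\Omega$ with $\Omega_{r}\Theta_{1}=D(\Theta)I_{n}$, so that $(\Omega\Omega_{r}^{\ast})\Delta_{1}=\gamma I_{n}$ with $G(\Omega\Omega_{r}^{\ast})=1$ by Lemma \ref{shly7}, and then Lemma \ref{shly9}(2) forces $\gamma=D(\Delta_{1})$. The second gap is the one you flag yourself in (c): the divisibility $G(\Theta_{1})\mid G(\Theta)$ is left unproved when the scalar factor of $\Theta_{1}$ is not coprime to $D(\Delta)$. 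Your caution here is warranted --- this is precisely the delicate situation of Remark \ref{shre2}, where a product acquires a scalar inner factor that neither factor possesses, and the paper's one-sentence justification of (c) does not engage with it either --- but as submitted your argument for (c) is incomplete.
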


\begin{proof}
For (a),
by Lemma \ref{shly2}, $D(\Theta)$ is a factor of $D(\Delta \Theta)$. Let
\begin{equation}\label{shf70}
  \Theta_{1}=\text{left-g.c.d}\{D(\Theta) I_{n}, \Delta \Theta\},
\end{equation}
then there exists $\Delta_{1}$ such that $\Delta\Theta=\Theta_{1}\Delta_{1}$.
By Corollary \ref{shly3}, $D(\Theta_{1})=D(\Theta)$.
Write $D(\Delta \Theta)I_{n}=\Omega (\Delta \Theta)$, using Remark \ref{shre3}, there exists a right factor $\Omega_{r}$ of $\Omega$ such that $\Omega_{r} \Theta_{1}=D(\Theta) I_{n}$. Hence
\begin{equation}\label{shf75}
  D(\Delta \Theta) I_{n}=\Omega (\Delta \Theta)=\Omega \Theta_{1}\Delta_{1}=D(\Theta_{1}) (\Omega \Omega_{r}^{\ast})\Delta_{1} .
\end{equation}
By Lemma \ref{shly7}, $\Omega$ has no nonconstant scalar inner factors, this implies that $ \Omega \Omega_{r}^{\ast}$ also has no nonconstant scalar inner factors. By Lemma \ref{shly9}, $ (\Omega \Omega_{r}^{\ast})\Delta_{1}=D(\Delta_{1})I_{n}$. Therefore, $(\ref{shf75})$ yields $D(\Delta \Theta)=D(\Theta_{1})D(\Delta_{1})$.

The proof of (b) is similar to (a).

(c) follows from the fact that scalar inner factors commutate with each other.

For (d), let $D(\Theta)I_{n}=\Theta \Theta_{0}$, then $\det(\Theta)$ is a factor of $D(\Theta)^{n}$. Since $\det(\Theta)=\Theta adj(\Theta)$,where $adj(\Theta)$ denotes the adjugate matrix of $\Theta$, then $D(\Theta)$ is a factor of $\det(\Theta)$. Let $\det(\Theta)=D(\Theta)u$, then $u$ is a factor of $D(\Theta)^{n-1}$.
Similarly, we obtain that $\det(\Theta_{1})=D(\Theta)u'$, where $u'$ is a factor of $D(\Theta)^{n-1}$. Since $D(\Delta)$ and $D(\Theta)$ are coprime, by Lemma C.13 in \cite{CHL2021}, $D(\Theta)$ and $\det(\Delta)$ are coprime. Since
$$\det(\Theta)\det(\Delta)=\det(\Theta_{1})\det(\Delta_{1}),$$
then $\det(\Theta_{1})$ is a factor of $\det(\Theta)$. If $\det(\Theta_{1})\neq  \det(\Theta)$, then there exists a nonconstant scalar inner function $\omega$ which is an inner factor of $D(\Theta)^{n}$ such that $$\det(\Theta)=\omega \det(\Theta_{1}) ~\text{and}~ \det(\Delta_{1})=\omega\det(\Delta).$$
Hence $\omega$ and $D(\Theta)$ are not coprime. Let $\omega_{0}=\text{g.c.d.}\{\omega, D(\Theta)\} \neq 1$, then
\begin{equation}\label{shf69}
  \text{g.c.d.}\{D(\Theta), \det(\Delta_{1})\}=\text{g.c.d.}\{D(\Theta), \omega\det(\Delta)\}=\omega_{0}.
\end{equation}
By Lemma \ref{shly1} and $(\ref{shf69})$, $D(\Theta)$ and $D(\Delta_{1})$ are not coprime, then $D(\Theta)$ and $\Delta_{1}$ are not left coprime.
Let $$\omega_{1}=\text{g.c.d.}\{D(\Theta), D(\Delta_{1})\} \neq 1~\text{and}~
 \Theta_{11}=\text{left-g.c.d.}\{D(\Theta) I_{n}, \Delta_{1}\}.$$
By Corollary \ref{shly3}, $D(\Theta_{11})=\omega_{1}$.

If $D(\Theta_{1} \Theta_{11})=D(\Theta)$,  $(\ref{shf70})$ shows that $\Theta_{11}$ is a constant unitary matrix. This contradicts the fact that $\omega_{1} \neq 1$.

If $D(\Theta_{1} \Theta_{11})\neq D(\Theta)$, by Lemma \ref{shly2}, there exists a nonconstant scalar inner function $\omega_{2}$ which is an inner divisor of $\omega_{1}$ such that $D(\Theta_{1} \Theta_{11})= \omega_{2}D(\Theta)$. Since
$$D(\Theta)\delta=D(\Delta \Theta)=D(\Theta_{1}\Theta_{11}(\Theta_{11}^{\ast} \Delta_{1}))=D(\Theta_{1}\Theta_{11}) \delta'$$
for some scalar inner functions $\delta$ and $\delta'$, then $\delta=\omega_{2} \delta'$. Since $D(\Theta)$ and $D(\Delta)$ are coprime, by Lemma \ref{shly2}, $D(\Delta)=\delta$, then $\omega_{2}$ is a common divisor of $D(\Delta)$ and $D(\Theta)$. This is a contradiction.

Combining the above arguments, we obtain $\det(\Theta_{1})= \det(\Theta)$. Similarly, we can show that $\det(\Delta_{2})=\det(\Delta)$, this yields that $\det(\Theta_{2})=\det(\Theta)$.
 \end{proof}

\begin{lemma}\label{shly13}
Suppose $\theta$ is a nonconstant scalar inner function and $\Theta, \Delta $ are two-sided inner functions with values in  $\mathbb{M}_{n}$. If $\theta$ and $D(\Delta)$ are  coprime, then there exists a two-sided inner function $\Theta_{1} $ such that $D(\Theta_{1})=D(\Theta)$ and
\begin{equation}\label{shf22}
  \text{left-g.c.d}\{\theta I_{n}, \Delta\Theta\}=\text{left-g.c.d}\{\theta I_{n}, \Theta_{1}\}.
\end{equation}
\end{lemma}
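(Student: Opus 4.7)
The plan is to invoke the ``weak commutativity'' result (Proposition \ref{shly14}(a)) to rewrite the product $\Delta\Theta$ in the opposite order, and then apply Lemma \ref{shly6} to strip off the right factor.

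First, I would apply Proposition \ref{shly14}(a) to obtain two-sided inner functions $\Theta_{1}, \Delta_{1}$ with
$$\Delta\Theta=\Theta_{1}\Delta_{1},\quad D(\Theta_{1})=D(\Theta),\quad D(\Delta\Theta)=D(\Theta_{1})D(\Delta_{1})=D(\Theta)D(\Delta_{1}).$$
This is exactly the candidate $\Theta_{1}$ promised by the lemma, so only the identity \eqref{shf22} remains to verify.

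Next I would check that $\theta$ and $D(\Delta_{1})$ are coprime. By Lemma \ref{shly2}(b), $D(\Delta\Theta)$ is a factor of $D(\Delta)D(\Theta)$, so there is a scalar inner $u$ with $D(\Delta)D(\Theta)=D(\Theta)D(\Delta_{1})u$, whence $D(\Delta_{1})$ divides $D(\Delta)$. Since $\theta$ and $D(\Delta)$ are coprime by hypothesis, $\theta$ and $D(\Delta_{1})$ are coprime as well.

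With this coprimality in hand, Lemma \ref{shly6} applied to the product $\Theta_{1}\Delta_{1}$ gives
$$\text{left-g.c.d.}\{\theta I_{n},\Theta_{1}\Delta_{1}\}=\text{left-g.c.d.}\{\theta I_{n},\Theta_{1}\}.$$
Substituting $\Theta_{1}\Delta_{1}=\Delta\Theta$ yields \eqref{shf22}. Since everything reduces to two prior results that have already been proved, I do not anticipate any real obstacle; the only slightly delicate point is making sure the factor $\Delta_{1}$ really has $D(\Delta_{1})\mid D(\Delta)$, which is why the precise refinement $D(\Delta\Theta)=D(\Theta_{1})D(\Delta_{1})$ recorded in Proposition \ref{shly14}(a) is essential.
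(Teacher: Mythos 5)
Your proposal is correct and follows essentially the same route as the paper: invoke Proposition \ref{shly14}(a) to write $\Delta\Theta=\Theta_{1}\Delta_{1}$ with $D(\Theta_{1})=D(\Theta)$ and $D(\Delta\Theta)=D(\Theta_{1})D(\Delta_{1})$, deduce from Lemma \ref{shly2} that $D(\Delta_{1})$ divides $D(\Delta)$ and hence is coprime to $\theta$, and finish with Lemma \ref{shly6}. The paper's proof is identical in structure, so there is nothing to add.
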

\begin{proof}
By Proposition \ref{shly14}, there exist two two-sided inner functions $\Theta_{1}, \Delta_{1} $ such that
\begin{equation}\label{shf23}
  \Delta \Theta=\Theta_{1} \Delta_{1}~ \text{and} ~D(\Theta_{1})=D(\Theta), D(\Delta \Theta)=D(\Theta_{1})D(\Delta_{1}),
\end{equation}
where $\Theta_{1}=\text{left-g.c.d.}\{D(\Theta) I_{n}, \Delta \Theta\}$.
By lemma \ref{shly2}, $D(\Delta\Theta)$ is the inner factor of $D(\Theta)D(\Delta)$, this means $ D(\Delta_{1})$ is a factor of $D(\Delta)$. Since $\theta$ and $D(\Delta)$ are  coprime, then $\theta$ and $D(\Delta_{1})$ are  coprime. By Lemma \ref{shly6},
 \begin{equation}\label{shf24}
  \text{left-g.c.d}\{\theta I_{n}, \Delta\Theta\}=\text{left-g.c.d}\{\theta I_{n}, \Theta_{1} \Delta_{1}\}=\text{left-g.c.d}\{\theta I_{n}, \Theta_{1}\}.
\end{equation}

%
%
%
%

\end{proof}

\begin{proposition}\label{shly16}
Suppose $\Theta, \Omega$ and $ \Delta$ are two-sided inner functions with values in  $\mathbb{M}_{n}$, and $\Phi \in H^{2}(\mathbb{M}_{n})$  and $\theta$ is a scalar inner function.
Then the following statements hold:
\begin{enumerate}
  \item If $\theta$ and $D(\Delta)$ are coprime, then there exists $\Theta_{1}$ such that $D(\Theta_{1})=D(\Theta)$ and
$$\text{left-g.c.d}\{\theta I_{n}, \Omega \Delta\Theta\}=\text{left-g.c.d}\{\theta I_{n},  \Omega\Theta_{1}\}.$$
  \item If $\theta$ and $\Phi$ are left coprime, then there exists $\Theta_{1}$ such that $D(\Theta_{1})=D(\Theta)$ and
$$\text{left-g.c.d}\{\theta I_{n}, \Omega \Phi \Theta\}=\text{left-g.c.d}\{\theta I_{n},  \Omega\Theta_{1}\}.$$
\end{enumerate}

\end{proposition}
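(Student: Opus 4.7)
The plan is to deduce (1) as an essentially direct combination of Proposition~\ref{shly14} and Lemma~\ref{shly6}, and then to reduce (2) to (1) via the inner-outer factorization of $\Phi$, with Remark~\ref{shre1} serving as the bridge.

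For (1), the first step is to apply Proposition~\ref{shly14}(a) to the pair $(\Delta,\Theta)$ to factor
$$\Delta\Theta=\Theta_{1}\Delta_{1},$$
where $\Theta_{1},\Delta_{1}$ are two-sided inner with $D(\Theta_{1})=D(\Theta)$ and $D(\Delta\Theta)=D(\Theta_{1})D(\Delta_{1})$. Since $D(\Delta\Theta)$ divides $D(\Theta)D(\Delta)$ by Lemma~\ref{shly2}(b), one obtains that $D(\Delta_{1})$ divides $D(\Delta)$, so the hypothesis that $\theta$ and $D(\Delta)$ are coprime passes down to $\theta$ and $D(\Delta_{1})$. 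Writing $\Omega\Delta\Theta=(\Omega\Theta_{1})\Delta_{1}$ and invoking Lemma~\ref{shly6} with the roles $\Theta\leftarrow\Omega\Theta_{1}$ and $\Delta\leftarrow\Delta_{1}$ absorbs $\Delta_{1}$ and yields
$$\text{left-g.c.d.}\{\theta I_{n},\Omega\Delta\Theta\}=\text{left-g.c.d.}\{\theta I_{n},\Omega\Theta_{1}\},$$
which is the desired identity.

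For (2), I would start from the inner-outer factorization $\Phi=\Phi_{i}\Phi_{o}$. By the definition of coprimeness for $H^{2}(\mathbb{M}_{n})$-functions and Lemma~\ref{shly1}, the hypothesis that $\theta$ and $\Phi$ are left coprime translates into $\theta$ being coprime with $D(\Phi_{i})$. Next, the product $\Phi_{o}\Theta=\Phi_{o}\cdot\Theta$ is already exhibited as (outer)$\cdot$(inner), which is a right inner-outer factorization of $\Phi_{o}\Theta\in H^{2}(\mathbb{M}_{n})$ with right inner part $\Theta$. Letting $\Phi_{o}\Theta=\Theta'\Phi'$ denote the corresponding left inner-outer factorization, Remark~\ref{shre1} supplies $D(\Theta')=D(\Theta)$. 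Substituting yields
$$\Omega\Phi\Theta=\Omega\Phi_{i}\Theta'\Phi'.$$
Because $\Phi'$ is an $n\times n$ outer function, $\Phi' H^{2}(\mathbb{C}^{n})$ is dense in $H^{2}(\mathbb{C}^{n})$, and since $\Omega\Phi_{i}\Theta'$ is a two-sided inner function acting isometrically with closed range on $H^{2}(\mathbb{C}^{n})$, the closures satisfy $\overline{\Omega\Phi\Theta\,H^{2}(\mathbb{C}^{n})}=\Omega\Phi_{i}\Theta'\,H^{2}(\mathbb{C}^{n})$, so the outer factor $\Phi'$ is transparent in the left-g.c.d.\ computation. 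Finally, applying part~(1) to $\Omega\Phi_{i}\Theta'$ (with $\Delta=\Phi_{i}$ and with $\Theta'$ playing the role of $\Theta$) produces a two-sided inner $\Theta_{1}$ satisfying $D(\Theta_{1})=D(\Theta')=D(\Theta)$ and
$$\text{left-g.c.d.}\{\theta I_{n},\Omega\Phi_{i}\Theta'\}=\text{left-g.c.d.}\{\theta I_{n},\Omega\Theta_{1}\}.$$

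The main obstacle in this plan is the outer-factor step in (2): one must rigorously argue that although $\Phi_{o}\Theta$ is not inner, its invariant-subspace image coincides with that of its left inner part, and that this left inner part shares the same minimal scalar inner multiple as $\Theta$; the first is a density argument together with the isometric property of two-sided inner multiplication, while the second is supplied by Remark~\ref{shre1}. A secondary subtlety is that $\Phi_{i}$ may be rectangular if $\Phi$ is not of full rank a.e.; this can be bypassed by restricting to the full-rank case relevant to the paper's applications, or by appealing to the natural rectangular analogue of Lemma~\ref{shly1}.
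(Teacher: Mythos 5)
Your proposal is correct and follows essentially the same route as the paper: part (1) via the factorization $\Delta\Theta=\Theta_{1}\Delta_{1}$ from Proposition~\ref{shly14} followed by Lemma~\ref{shly6}, and part (2) by reducing to (1) through the inner-outer factorization of $\Phi_{o}\Theta$ together with the identity $D(F_{li})=D(F_{ri})$ of Remark~\ref{shre1}. Your explicit density argument for discarding the outer factor $\Phi'$ is a slightly more careful rendering of a step the paper passes over quickly, but the substance is identical.
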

\begin{proof}
For (1),
by $(\ref{shf23})$ in the proof of Lemma \ref{shly13}, we have $\Omega \Delta\Theta=\Omega\Theta_{1} \Delta_{1}$ and $D(\Theta_{1})=D(\Theta)$, and the argument in Lemma \ref{shly13} shows that $\theta$ and $D(\Delta_{1})$ are coprime. By Lemma \ref{shly6}, we obtain the desired result.

For (2), let $\Phi=\Phi_{i}\Phi_{o}$ be the inner-outer factorization of $\Phi$, and $h=\Phi_{o}\Theta$, then $h$ has the inner-outer factorization of the form $h=h_{i}h_{o}$.
By the proof of Theorem 4.17 in \cite{CHL2019}, we have $D(h_{i})=D(\Theta)$.
By Lemma \ref{shly6},
$$\text{left-g.c.d}\{\theta I_{n}, \Omega \Phi \Theta\}=\text{left-g.c.d}\{\theta I_{n}, \Omega \Phi_{i} h_{i}\}.$$
Since $\theta$ and $\Phi$ are left coprime, then $\theta$ and $\Phi_{i}$ are left coprime. By (1), we obtain the desired result.
\end{proof}

\section{Subnormality of block Toeplitz operators}
In this section, we study which hyponormal Toeplitz operators are either normal or analytic. This problem is closely related to Halmos's Problem 5.

Suppose $\Phi \in L^{\infty}(\mathbb{M}_{n})$, the authors (Corollary 2.5 in \cite{GHR2006}) show that $\ker H_{\Phi}=\Theta H^{2}(\mathbb{C}^{n})$ for some square inner matrix $\Theta(z)$ if and only if $\Phi=\Phi_{1} \Theta^{\ast}$ where $\Phi_{1} \in H^{\infty}(\mathbb{M}_{n})$, and $\Phi_{1}$ and $\Theta$ are right coprime. Using this result, we can obtain the following lemma.

\begin{lemma}\label{shth4}
Suppose $\Delta$ is a two-sided inner function with values in  $\mathbb{M}_{n}$ without nonconstant scalar inner divisors and $\Phi \in H^{\infty}(\mathbb{M}_{n})$ and $\theta$ is a scalar inner function. If $\theta I_{n}$ and $\Phi$ are left coprime and
\begin{equation}\label{shf49}
  \ker H_{\widetilde{\Delta}\widetilde{\Phi}\widetilde{\Delta}\widetilde{\theta}^{\ast}}=\ker H_{\widetilde{D(\Delta)}\widetilde{\Delta}\widetilde{\theta}^{\ast}},
\end{equation}
Then there exists $\Delta_{1}$ with $D(\Delta) =D(\Delta_{1})$ such that
\begin{equation}\label{shf64}
  \text{left-g.c.d}\{\theta I_{n}, \Delta\Phi \Delta\}=\text{left-g.c.d}\{\theta I_{n}, D(\Delta)\Delta \}=\text{left-g.c.d.}\{\theta I_{n}, \Delta \Delta_{1}\}.
\end{equation}

\end{lemma}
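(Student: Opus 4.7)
The plan is to split the desired chain of equalities into two pieces. The existence of $\Delta_{1}$ together with the rightmost equality will come straight from the ``weak commutativity'' toolkit of Section 3, while the middle equality requires translating the kernel hypothesis through the Gu-Hendricks-Rutherford description of kernels of block Hankel operators.

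For the right-hand part, I would apply Proposition \ref{shly16}(2) with $\Omega=\Delta$, $\Theta=\Delta$, and the given $\Phi$. Since $\theta I_{n}$ and $\Phi$ are left coprime by hypothesis, this yields a two-sided inner function $\Delta_{1}$ with $D(\Delta_{1})=D(\Delta)$ satisfying
\[
\text{left-g.c.d.}\{\theta I_{n}, \Delta\Phi\Delta\}=\text{left-g.c.d.}\{\theta I_{n}, \Delta\Delta_{1}\}.
\]
This $\Delta_{1}$ is the one promised by the lemma, and the above is the rightmost equality of \eqref{shf64}.

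For the middle equality, the strategy is to read off each of the two kernels in the hypothesis directly from its symbol. For any $G\in H^{\infty}(\mathbb{M}_{n})$, the symbol $\widetilde{G}\,\widetilde{\theta}^{\ast}$ has the form $\widetilde{G}\,(\widetilde{\theta}I_{n})^{\ast}$, which fits the framework of Corollary 2.5 in \cite{GHR2006} recalled at the start of Section 4. Setting $\Lambda=\text{right-g.c.d.}\{\widetilde{G},\widetilde{\theta}I_{n}\}$ and factoring $\widetilde{G}=\widehat{F}\Lambda$, $\widetilde{\theta}I_{n}=\widehat{G}\Lambda$ with $\widehat{F},\widehat{G}$ right coprime, the symbol becomes $\widehat{F}\widehat{G}^{\ast}$, so that
\[
\ker H_{\widetilde{G}\widetilde{\theta}^{\ast}}=\widehat{G}\,H^{2}(\mathbb{C}^{n})=\widetilde{\theta}\,\Lambda^{\ast}\,H^{2}(\mathbb{C}^{n}).
\]
Here $\Lambda$ and $\widehat{G}=\widetilde{\theta}\Lambda^{\ast}$ are two-sided inner by Lemma \ref{shly33} (together with its tilde analog for right inner divisors). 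The anti-isomorphism identity $\widetilde{\text{left-g.c.d.}\{A,B\}}=\text{right-g.c.d.}\{\widetilde{A},\widetilde{B}\}$ then identifies $\Lambda=\widetilde{\Omega}$, where $\Omega=\text{left-g.c.d.}\{\theta I_{n},G\}$.

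I would apply this with $G_{1}=\Delta\Phi\Delta$ and $G_{2}=D(\Delta)\Delta$, matching the two symbols in the hypothesis (using $D(\Delta)\Delta=\Delta D(\Delta)$), and set $\Omega_{i}=\text{left-g.c.d.}\{\theta I_{n},G_{i}\}$. The two kernels then become $\widetilde{\theta}\widetilde{\Omega_{i}}^{\ast}H^{2}(\mathbb{C}^{n})$. The kernel hypothesis together with the uniqueness clause of the Beurling-Lax-Halmos Theorem forces $\widetilde{\theta}\widetilde{\Omega_{1}}^{\ast}=\widetilde{\theta}\widetilde{\Omega_{2}}^{\ast}$ up to a constant unitary right factor; cancelling $\widetilde{\theta}$ on the left and applying the tilde involution yields $\Omega_{1}=\Omega_{2}$, i.e.,
\[
\text{left-g.c.d.}\{\theta I_{n}, \Delta\Phi\Delta\}=\text{left-g.c.d.}\{\theta I_{n}, D(\Delta)\Delta\}.
\]
Concatenating this with the rightmost equality produced by Proposition \ref{shly16}(2) completes the chain in \eqref{shf64}.

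The main obstacle is not algebraic but bookkeeping: one must keep precise track of the tilde involution, which swaps left and right inner divisors, and verify at each stage that the intermediate factors $\Lambda$, $\widetilde{\Omega_{i}}$, and $\widetilde{\theta}\widetilde{\Omega_{i}}^{\ast}$ are genuinely two-sided inner, so that the Beurling-Lax-Halmos uniqueness clause can legitimately be invoked to conclude $\widetilde{\Omega_{1}}=\widetilde{\Omega_{2}}$.
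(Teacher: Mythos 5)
Your proposal is correct and follows essentially the same route as the paper: Proposition \ref{shly16}(2) with $\Omega=\Theta=\Delta$ gives the rightmost equality and the existence of $\Delta_{1}$, while Corollary 2.5 of \cite{GHR2006} converts the kernel hypothesis \eqref{shf49} into the first equality of \eqref{shf64}. The only difference is that you spell out the kernel-to-g.c.d.\ translation (via the right coprime factorization and Beurling--Lax--Halmos uniqueness), which the paper treats as immediate; your bookkeeping of the tilde involution and the two-sidedness of the intermediate factors is accurate.
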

\begin{proof}
By Corollary 2.5 in \cite{GHR2006}, $(\ref{shf49})$ is equivalent to the first equation in $(\ref{shf64})$.
By Proposition \ref{shly16}, there exists $\Delta_{1}$ with $D(\Delta_{1})=D(\Delta)$ such that
$$\text{left-g.c.d}\{\theta I_{n}, \Delta\Phi\Delta\}=\text{left-g.c.d}\{\theta I_{n}, \Delta \Delta_{1}\}.$$
\end{proof}

Let $\Phi \in L^{\infty}(\mathbb{M}_{n})$ be such that $\Phi$ and $\Phi^{\ast}$ are of bounded type.
By Theorem 3.1 in \cite{CHL2021}, we have
\begin{equation}\nonumber
\Phi=\Phi_{1}\Theta^{\ast},~ \Phi^{\ast}= \Phi_{2}\Omega^{\ast}~(\text{right coprime}),
\end{equation}
where $\Theta, \Omega$ are two-sided inner functions with values in  $\mathbb{M}_{n}$, and $\Phi_{1}, \Phi_{2} \in H^{\infty}(\mathbb{M}_{n})$.
If $T_{\Phi}$ is hyponormal, then $\ker H_{\Phi^{\ast}} \subset \ker H_{\Phi}$, i.e., there exists a two-sided inner function $\Delta$ such that
 $\Omega=\Theta \Delta$.

To prove our main theorem, we need the following technical lemma.
\begin{lemma}\label{shly18}
Let $\Phi \in L^{\infty}(\mathbb{M}_{n})$ be such that $\Phi$ and $\Phi^{\ast}$ are of bounded type of the form
\begin{equation}\label{shf27}
  \Phi=\Phi_{1} \theta^{\ast}, ~\Phi^{\ast}=\Phi_{2}\theta^{\ast} \Delta^{\ast}(\text{right coprime}),
\end{equation}
where $\theta$ is a scalar inner function, and $\theta$ is not an inner factor of $D(\Delta)$ and $\Phi_{1},\Phi_{2} \in H^{\infty}(\mathbb{M}_{n})$. If $T_{\Phi}$ and $T^{2}_{\Phi}$ are hyponormal, then $T_{\Phi}$ is either normal or analytic.
 \end{lemma}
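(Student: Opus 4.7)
The plan is to mirror, in the bounded type setting, the Curto--Hwang--Lee strategy used for rational symbols: assume $T_\Phi$ is not analytic (so $\theta$ is a nonconstant scalar inner function), and deduce from the joint hyponormality of $T_\Phi$ and $T_\Phi^2$ a precise kernel identity for a block Hankel operator whose symbol is built from $\Delta$, $\theta$ and a factor arising from the normal commutator. Lemma \ref{shth4}, Corollary \ref{shly17}, and the hypothesis $\theta \nmid D(\Delta)$ will then force $\Delta$ to be a constant unitary, after which normality of $T_\Phi$ follows automatically.

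\textbf{Step 1 (First hyponormality).} Since $T_\Phi$ is hyponormal and $\Phi,\Phi^{*}$ are of bounded type, the matrix-valued Cowen/Gu--Hendricks--Rutherford theorem yields $\Phi$ normal a.e. and some $K\in H^{\infty}(\mathbb{M}_n)$ with $\|K\|_\infty\le1$ such that $\Phi-K\Phi^{*}\in H^{\infty}(\mathbb{M}_n)$. Substituting $\Phi=\Phi_1\theta^{*}$ and $\Phi^{*}=\Phi_2(\theta\Delta)^{*}$ and using that $\theta$ is scalar, this is equivalent to $\Phi_1-K\Phi_2\Delta^{*}\in\theta H^{\infty}(\mathbb{M}_n)$. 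Right-multiplying by $\Delta$ gives $\Phi_1\Delta-K\Phi_2\in\theta\Delta\,H^{\infty}(\mathbb{M}_n)$, i.e.\ $K\Phi_2\equiv\Phi_1\Delta\pmod{\theta\Delta}$.

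\textbf{Step 2 (Second hyponormality $\Rightarrow$ kernel identity).} Apply the same characterization to $T_{\Phi^{2}}$ (which is hyponormal by assumption) to obtain $K_2\in H^{\infty}(\mathbb{M}_n)$, $\|K_2\|_\infty\le1$, with $\Phi^{2}-K_2(\Phi^{*})^{2}\in H^{\infty}(\mathbb{M}_n)$. Because $\theta$ is scalar, $\Phi^{2}=\Phi_1^{2}\theta^{*2}$ and $(\Phi^{*})^{2}=\Phi_2\Delta^{*}\Phi_2\Delta^{*}\theta^{*2}$, so the condition becomes
\begin{equation*}
\Phi_1^{2}-K_2\Phi_2\Delta^{*}\Phi_2\Delta^{*}\in\theta^{2} H^{\infty}(\mathbb{M}_n).
\end{equation*}
Using $(\ref{eq1})$--$(\ref{eq4})$ to rewrite this symbol-level identity in terms of block Hankel operators, and feeding in the first-order identity from Step 1, one extracts the kernel equation
\begin{equation*}
\ker H_{\widetilde{\Delta}\widetilde{\Phi}\widetilde{\Delta}\widetilde{\theta}^{*}}=\ker H_{\widetilde{D(\Delta)}\widetilde{\Delta}\widetilde{\theta}^{*}},
\end{equation*}
which is exactly the hypothesis $(\ref{shf49})$ of Lemma \ref{shth4} (applied after first reducing $\Delta$ to its no-scalar-inner-factor part $\Delta'=G(\Delta)^{*}\Delta$, which is legitimate since scalar factors commute with everything).

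\textbf{Step 3 (Apply Section 3).} Lemma \ref{shth4} produces a two-sided inner $\Delta_1$ with $D(\Delta_1)=D(\Delta)$ such that
\begin{equation*}
\text{left-g.c.d.}\{\theta I_n,\Delta\Phi\Delta\}=\text{left-g.c.d.}\{\theta I_n,D(\Delta)\Delta\}=\text{left-g.c.d.}\{\theta I_n,\Delta\Delta_1\}.
\end{equation*}
Corollary \ref{shly17} (or Theorem \ref{shly10} after absorbing $G(\Delta)$) then identifies this common left divisor as $\omega I_n$, where $\omega=\text{g.c.d.}\{\theta,G(\Delta)D(\Delta)\}$, and moreover $\omega^{*}\theta$ and $D(\Delta)$ are coprime.

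\textbf{Step 4 (Conclude).} The hypothesis that $\theta$ is not an inner factor of $D(\Delta)$ says $\omega$ is a proper divisor of $\theta$, so $\omega^{*}\theta$ is a nonconstant scalar inner function coprime to $D(\Delta)$. Plugging this coprimality back into the Cowen identity $\Phi_1\Delta-K\Phi_2\in\theta\Delta\,H^{\infty}$ from Step 1, Lemma \ref{shly29} (applied with the coprime pair $\omega^{*}\theta$ and $D(\Delta)$) forces the inner part of $\Delta$ to be trivial, so $\Delta$ is a constant unitary. Consequently $\ker H_{\Phi^{*}}=\theta\Delta\,H^{2}(\mathbb{C}^n)=\theta H^{2}(\mathbb{C}^n)=\ker H_\Phi$, which combined with the normality of $\Phi$ and the hyponormality identity $[T_\Phi^{*},T_\Phi]=H_{\Phi^{*}}^{*}H_{\Phi^{*}}-H_\Phi^{*}H_\Phi$ gives $[T_\Phi^{*},T_\Phi]=0$; i.e., $T_\Phi$ is normal.

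\textbf{Main obstacle.} The delicate step is \emph{Step 2}: translating the symbol-level condition $\Phi^{2}-K_2(\Phi^{*})^{2}\in H^{\infty}$ into the precise kernel equation needed for Lemma \ref{shth4}. The asymmetry between left and right coprimality in matrix-valued DSS factorizations, together with the appearance of the ``sandwich'' $\Delta^{*}\Phi_2\Delta^{*}$, means the reduction cannot be done by naive cancellation; one must carefully exploit the commutator identities $(\ref{eq2})$--$(\ref{eq4})$ and the normality of $\Phi$ from Step 1 to eliminate cross-terms and identify $D(\Delta)$ on the right-hand side. Everything else is a fairly direct application of the machinery built in Section 3.
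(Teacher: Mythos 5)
There are two genuine gaps, one at each end of your argument. In Step 2 you conflate $T_{\Phi}^{2}$ with $T_{\Phi^{2}}$: the hypothesis is that the \emph{square of the operator} $T_{\Phi}^{2}$ is hyponormal, not that the Toeplitz operator with symbol $\Phi^{2}$ is hyponormal, so the Gu--Hendricks--Rutherford characterization does not hand you a $K_{2}$ with $\Phi^{2}-K_{2}(\Phi^{\ast})^{2}\in H^{\infty}(\mathbb{M}_{n})$. The paper instead works directly with the positive commutator $[T_{\Phi}^{\ast 2},T_{\Phi}^{2}]$: it computes $T_{\theta^{2}\delta^{2}}^{\ast}[T_{\Phi}^{\ast 2},T_{\Phi}^{2}]T_{\theta^{2}\delta^{2}}=0$ using normality of $\Phi$, deduces $[T_{\Phi}^{\ast 2},T_{\Phi}^{2}]T_{\theta^{2}\delta^{2}}=0$ from positivity, expands this via \eqref{eq2} into $H^{\ast}_{\Phi^{\ast}}H_{\Phi\Phi^{\ast 2}\theta^{2}\delta^{2}}=H^{\ast}_{\Phi}H_{\Phi^{\ast}\Phi^{2}\theta^{2}\delta^{2}}$, and only then extracts the kernel identity \eqref{shf32} by a range-containment argument (\eqref{shf30}--\eqref{shf42}). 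This is exactly the step you flag as ``the delicate step'' and leave unexecuted; it is the heart of the proof and cannot be waved through.

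Step 4 is also wrong in substance. The coprimality of $\omega^{\ast}\theta$ and $D(\Delta)$ together with Lemma \ref{shly29} does not force $\Delta$ to be a constant unitary, and that is not what the paper concludes; the lemma's conclusion is normality of $T_{\Phi}$, not triviality of the inner part of $\Phi^{\ast}$. What the hypothesis $\theta\nmid D(\Delta)$ actually buys is that $\omega\neq\theta$, hence $\mathcal{K}_{\widetilde{\omega}^{\ast}\widetilde{\theta}}\neq\{0\}$; the paper then shows $(I-T_{\widetilde{K}}T_{\widetilde{K}}^{\ast})\mathcal{K}_{\widetilde{\omega}^{\ast}\widetilde{\theta}}\subset\widetilde{\theta}H^{2}(\mathbb{C}^{n})$, runs a norm argument to get $T_{\widetilde{K}}T_{\widetilde{K}}^{\ast}f=f$ there, and uses a reproducing-kernel evaluation plus the maximum principle to conclude that the Cowen witness $K$ is a \emph{unitary constant}, whence $[T_{\Phi}^{\ast},T_{\Phi}]=H_{\Phi^{\ast}}^{\ast}(I-T_{\widetilde{K}}T_{\widetilde{K}}^{\ast})H_{\Phi^{\ast}}=0$. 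Even granting your (unsupported) claim that $\Delta$ is constant, your final inference from $\ker H_{\Phi^{\ast}}=\ker H_{\Phi}$ to $H_{\Phi^{\ast}}^{\ast}H_{\Phi^{\ast}}=H_{\Phi}^{\ast}H_{\Phi}$ is invalid: equality of kernels of two Hankel operators does not give equality of the associated positive operators. Steps 1 and 3 are broadly consistent with the paper, but the proof as proposed does not close.
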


\begin{proof}

If $\theta$ is constant, then $\Phi$ is analytic. Hence in the rest of the proof, we always assume $\theta$ is nonconstant and we just need to show that $T_{\Phi}$ is normal.
 Let
\begin{equation}\label{shf29}
\delta=D(\Delta)=\Delta \Delta',
\end{equation}
where $\Delta'$ is a two-sided inner function.

The following arguments are divided into two steps.

Step 1:

It is not hard to check that $\Phi^{2}\theta^{2}\delta^{2} $ and $\Phi^{\ast 2}\theta^{2}\delta^{2} $ are holomorphic. Since $T_{\Phi}$ is hyponormal, by Theorem 3.3 in \cite{GHR2006}, $\Phi $ is normal, i.e., $\Phi(z) \Phi^{\ast}(z)=\Phi(z)^{\ast} \Phi(z)$ for almost every $z \in \mathbb{T}$. Therefore,
\begin{equation}\label{shf25}
  \begin{split}
    T^{\ast}_{\theta^{2} \delta^{2}}[T_{\Phi}^{\ast 2}, T_{\Phi}^{2}]T_{\theta^{2} \delta^{2}}
    &=T^{\ast}_{\theta^{2} \delta^{2}}(T_{\Phi}^{\ast 2}T_{\Phi}^{2}-T_{\Phi}^{2}T_{\Phi}^{\ast 2})T_{\theta^{2} \delta^{2}}\\
    &=T_{\Phi^{\ast 2}\theta^{ \ast 2} \delta^{\ast 2} }T_{\Phi^{2}\theta^{2} \delta^{2}}-T_{\Phi^{ 2}\theta^{ \ast 2} \delta^{\ast 2} }T_{\Phi^{\ast 2}\theta^{2}
    \delta^{2}}\\
    &=T_{\Phi^{\ast 2} \Phi^{2}} -T_{\Phi^{2} \Phi^{\ast 2}}=0.
  \end{split}
\end{equation}
Since $T^{2}_{\Phi}$ is hyponormal, then $(\ref{shf25})$ implies that $[T_{\Phi}^{\ast 2}, T_{\Phi}^{2}]T_{\theta^{2} \delta^{2}}=0$. By $(\ref{eq2})$, we have
\begin{equation}\label{shf26}
  \begin{split}
    0=[T_{\Phi}^{\ast 2}, T_{\Phi}^{2}]T_{\theta^{2} \delta^{2}}
    &=T_{\Phi }^{\ast 2}T_{\Phi^{2}\theta^{2} \delta^{2}}-T_{\Phi }^{ 2}T_{\Phi^{\ast 2}\theta^{2}
    \delta^{2}}\\
    &= T_{\Phi^{\ast } }T_{ \Phi^{\ast }\Phi^{2}\theta^{2} \delta^{2}}-T_{\Phi }T_{ \Phi \Phi^{\ast 2}\theta^{2}
    \delta^{2}}\\
    &=H^{\ast}_{\Phi^{\ast}}H_{\Phi \Phi^{\ast 2} \theta^{2}\delta^{2}}-H^{\ast}_{\Phi} H_{\Phi^{\ast}\Phi^{2} \theta^{2}\delta^{2}}.
  \end{split}
\end{equation}
By $(\ref{shf27})$ and $(\ref{shf29})$,
\begin{equation}\label{shf31}
\Phi \Phi^{\ast 2} \theta^{2} \delta^{2}=\Phi_{1}\Phi_{2} \Delta'\Phi_{2} \Delta'\theta^{\ast},~ \Phi^{2}\Phi^{\ast} \theta^{2}\delta^{2}=\Phi_{1}^{2}\Phi_{2}\Delta'\delta \theta^{\ast},
\end{equation}
which implies that
\begin{equation}\label{shf30}
  \widetilde{\theta}H^{2}(\mathbb{C}^{n}) \subset \ker H^{\ast}_{\Phi \Phi^{\ast 2} \theta^{2}\delta^{2}}~\text{and}~ \widetilde{\theta}H^{2}(\mathbb{C}^{n}) \subset \ker H^{\ast}_{\Phi^{\ast} \Phi^{2} \theta^{2}\delta^{2}}.
\end{equation}
Since
\begin{equation}\label{shf35}
  \ker H^{\ast}_{\Phi^{\ast}}=\ker H^{\ast}_{\Phi_{2} \theta^{\ast} \Delta^{\ast}} =\ker H_{\widetilde{\theta^{\ast} \Delta^{\ast}} \widetilde{\Phi_{2}}} \subset \ker H_{\widetilde{\theta^{\ast} } \widetilde{\Phi_{2}}} = \widetilde{\theta}H^{2}(\mathbb{C}^{n})
\end{equation}
and $\ker H_{\Phi}^{\ast}=\widetilde{\theta}H^{2}(\mathbb{C}^{n})$, then $(\ref{shf30})$ shows that
$$\text{cl-ran}H_{\Phi \Phi^{\ast 2} \theta^{2}\delta^{2}} \subset \mathcal{K}_{\widetilde{\theta}}\perp \ker H^{\ast}_{\Phi^{\ast}}~\text{and}~ \text{cl-ran}H_{\Phi^{2} \Phi^{\ast } \theta^{2}\delta^{2}} \subset \mathcal{K}_{\widetilde{\theta}} \perp \ker H_{\Phi}^{\ast},$$
where $\mathcal{K}_{\widetilde{\theta}}=H^{2}(\mathbb{C}^{n})\ominus \widetilde{\theta}H^{2}(\mathbb{C}^{n}) $.
Therefore,
\begin{equation}\label{shf42}
  \begin{split}
    \ker H_{\Phi \Phi^{\ast 2} \theta^{2}\delta^{2}}
    &=\ker H^{\ast}_{\Phi^{\ast}}H_{\Phi \Phi^{\ast 2} \theta^{2}\delta^{2}}\\
    &=\ker H^{\ast}_{\Phi} H_{\Phi^{\ast} \Phi^{2} \theta^{2}\delta^{2}}\\
    &=\ker H_{\Phi^{\ast} \Phi^{2} \theta^{2}\delta^{2}}.
  \end{split}
\end{equation}
By $(\ref{shf31})$ and $(\ref{shf42})$, we have
\begin{equation}\label{shf32}
  \ker H_{\Phi_{1}\Phi_{2} \Delta'\Phi_{2} \Delta'\theta^{\ast}}=\ker H_{\Phi_{1}^{2}\Phi_{2}\Delta'\delta \theta^{\ast}}.
\end{equation}

Step 2:

Suppose $\Delta$ has no nonconstant scalar inner factors, by Lemma \ref{shly7}, $\Delta'$ has no nonconstant scalar inner factors, and by Lemma \ref{shly9}, $D(\Delta')=\delta$.
Let $$\omega=\text{g.c.d.}\{\theta, \delta\},$$
 by $(\ref{shf32})$, Lemma \ref{shly6} and Lemma \ref{shth4}, (if $\Delta$ has a scalar inner divisor $\delta_{0}$, let $\Delta=\delta_{0} \Delta_{1}$ and $D(\Delta_{1})=\delta_{1}=\Delta_{1}\Delta'$, then $D(\Delta')=\delta_{1}$ and $\Phi_{1}^{2}\Phi_{2}\Delta'\delta \theta^{\ast}=\Phi_{1}^{2}\Phi_{2}\Delta'\delta_{0}\delta_{1} \theta^{\ast}$, combining with Corollary  \ref{shly17}, $(\ref{shf33})$ also holds.)
\begin{equation}\label{shf33}
  \text{left-g.c.d.}\{\widetilde{\theta} I_{n}, \widetilde{\Delta'}\widetilde{\Phi_{2}}\widetilde{\Delta'}\widetilde{\Phi_{1}\Phi_{2} }\}=\text{left-g.c.d.}\{\widetilde{\theta} I_{n},\widetilde{\delta\Delta'}\widetilde{\Phi_{1}^{2}\Phi_{2} }\}=\widetilde{\omega}.
\end{equation}

Since $\theta$ is not the factor of $D(\Delta)$, then $\omega \neq \theta$. Hence $H_{\Phi_{1}^{2}\Phi_{2}\Delta'\delta \theta^{\ast}} \neq 0$.
By $(\ref{shf31})$ and $(\ref{shf33})$, $\widetilde{\omega}^{\ast} \widetilde{\theta}$ and $\widetilde{\omega}^{\ast} \widetilde{\Phi }\widetilde{\Phi^{\ast 2}}\widetilde{\theta^{3}}\widetilde{\delta^{2}}$ are left coprime, by Theorem 4.17 in \cite{CHL2019}, $\widetilde{\omega}^{\ast} \widetilde{\theta}$ and $\widetilde{\omega}^{\ast} \widetilde{\Phi }\widetilde{\Phi^{\ast 2}}\widetilde{\theta^{3}}\widetilde{\delta^{2}}$ are right coprime. By $(\ref{eq1})$, $(\ref{shf31})$ and Corollary 2.5 in \cite{GHR2006},
$$\ker H^{\ast}_{\Phi^{\ast 2} \Phi \theta^{2}\delta^{2}} =\widetilde{\omega}^{\ast} \widetilde{\theta} H^{2}(\mathbb{C}^{n}).$$
Therefore,

\begin{equation}\label{shf36}
\text{cl-ran}H_{\Phi^{\ast 2} \Phi \theta^{2}\delta^{2}}=(\ker H^{\ast}_{\Phi^{\ast 2} \Phi \theta^{2}\delta^{2}})^{\perp}=\mathcal{K}_{\widetilde{\omega}^{\ast} \widetilde{\theta}}.
\end{equation}
Since $T_{\Phi}$ is hyponormal, by Theorem 3.3 in \cite{GHR2006}, there exists $K \in H^{\infty}(\mathbb{M}_{n})$ with $\|K\|_{\infty} \leq 1$ such that
\begin{equation}\label{shf34}
  \Phi -K \Phi^{\ast} \in H^{\infty}(\mathbb{M}_{n}).
\end{equation}
By $(\ref{eq4})$ and $(\ref{shf34})$,
\begin{equation}\label{shf40}
  H_{\Phi}=H_{K\Phi^{\ast}}=T^{\ast}_{\widetilde{K}} H_{\Phi^{\ast}}.
\end{equation}
Since $\Phi \Phi^{\ast}\theta^{2} \delta^{2} \in H^{\infty}(\mathbb{M}_{n})$ and $\Phi$ is normal, then
\begin{equation}\nonumber
  \begin{split}
   H_{\Phi^{\ast} \Phi^{2} \theta^{2}\delta^{2}}
   &=H_{\Phi} T_{\Phi^{\ast} \Phi \theta^{2}\delta^{2}}\\
    &=T^{\ast}_{\widetilde{K}} H_{\Phi^{\ast}}T_{\Phi^{\ast} \Phi \theta^{2}\delta^{2}}\\
    &=T^{\ast}_{\widetilde{K}} H_{\Phi^{\ast 2} \Phi \theta^{2}\delta^{2}}.
  \end{split}
\end{equation}
By $(\ref{shf26})$,
\begin{equation}\nonumber
  \begin{split}
    0
    &=H^{\ast}_{\Phi^{\ast}}H_{\Phi \Phi^{\ast 2} \theta^{2}\delta^{2}}-H^{\ast}_{\Phi} H_{\Phi^{\ast} \Phi^{2} \theta^{2}\delta^{2}}\\
    &=H^{\ast}_{\Phi^{\ast}}H_{\Phi \Phi^{\ast 2} \theta^{2}\delta^{2}}-H^{\ast}_{\Phi^{\ast}}T_{\widetilde{K}} H_{\Phi^{\ast} \Phi^{2} \theta^{2}\delta^{2}}\\
    &=H^{\ast}_{\Phi^{\ast}}(I-T_{\widetilde{K}}T_{\widetilde{K}}^{\ast}) H_{\Phi \Phi^{\ast 2} \theta^{2}\delta^{2}},
  \end{split}
\end{equation}
by $(\ref{shf35})$ and $(\ref{shf36})$, we have
\begin{equation}\label{shf37}
  (I-T_{\widetilde{K}}T_{\widetilde{K}}^{\ast})\mathcal{K}_{\widetilde{\omega}^{\ast} \widetilde{\theta}} \subset \widetilde{\theta}H^{2}(\mathbb{C}^{n}).
\end{equation}
For $f \in \mathcal{K}_{\widetilde{\omega}^{\ast} \widetilde{\theta}}$, $(\ref{shf37})$ shows that there exists $h\in H^{2}(\mathbb{C}^{n})$ such that
$$(I-T_{\widetilde{K}}T_{\widetilde{K}}^{\ast})f= \widetilde{\theta}h \Rightarrow T_{\widetilde{K}}T_{\widetilde{K}}^{\ast}f=f-\widetilde{\theta}h.$$
Since $\mathcal{K}_{\widetilde{\omega}^{\ast} \widetilde{\theta}} \subset \mathcal{K}_{ \widetilde{\theta}}$, then
$$\|T_{\widetilde{K}}T_{\widetilde{K}}^{\ast}f\|^{2}=\|f-\widetilde{\theta}h\|^{2}=\|f\|^{2}+\|\widetilde{\theta}h\|^{2}.$$
Since $T_{\widetilde{K}}$ is contractive, then $\widetilde{\theta}h=0$. Therefore,
\begin{equation}\label{shf38}
  T_{\widetilde{K}}T_{\widetilde{K}}^{\ast}f=f, ~f\in \mathcal{K}_{\widetilde{\omega}^{\ast} \widetilde{\theta}} .
\end{equation}
It is clear that
$$\|f\|=\|T_{\widetilde{K}}T_{\widetilde{K}}^{\ast}f\|\leq \|T_{\widetilde{K}}^{\ast}f\| \leq \|f\|.$$
Therefore, $\|f\|=\|T_{\widetilde{K}}^{\ast}f\| =\|T_{\check{K}}f\|$. Obviously,
$$\|f\|^{2} \geq \|\check{K}f \|^{2}= \|P(\check{K}f)\|^{2}+\|(I-P)(\check{K}f)\|^{2}\geq \|f\|^{2},$$
which yields that $\|(I-P)(\check{K}f)\|^{2}=0$. Hence $\check{K}f \in H^{2}(\mathbb{C}^{n})$. Combining $(\ref{shf38})$, we have
\begin{equation}\label{shf39}
  \widetilde{K}\check{K}f=f, ~f\in \mathcal{K}_{\widetilde{\omega}^{\ast} \widetilde{\theta}} .
\end{equation}
Since $\widetilde{\omega}^{\ast} \widetilde{\theta}$ is not constant, let $u=\widetilde{\omega}^{\ast} \widetilde{\theta}$ and $K_{w}(z)=\frac{1-u(z)u(w)^{\ast}}{1-z\overline{w}}I_{n}$, then there exist $z_{0}, w_{0} \in \mathbb{D}$ and a constant $\alpha\neq 0$ such that $K_{w_{0}}(z_{0})=\alpha I_{n}$. Therefore,
$$\bigvee\{K_{w_{0}}(z_{0})c :  c\in  \mathbb{C}^{n}\}=\mathbb{C}^{n},$$
where $\bigvee$ denotes the closed linear span in $\mathbb{C}^{n}$.
 By $(\ref{shf39})$, we have
 $$(\widetilde{K}\check{K})(z_{0})K_{w_{0}}(z_{0})c=K_{w_{0}}(z_{0}) (\widetilde{K}\check{K})(z_{0}) c =K_{w_{0}}(z_{0})c,$$
 which implies that
 $$(\widetilde{K}\check{K})(z_{0})=\check{K}^{\ast}(z_{0})\check{K}(z_{0})=I_{n}.$$
  Hence $\check{K}(z_{0})$ is unitary. Since $K$ is contractive, then the maximum principle implies that $K$ is a unitary constant.
By $(\ref{eq2})$ and $(\ref{shf40})$
$$[T_{\Phi}^{\ast},T_{\Phi}]=H_{\Phi^{\ast}}^{\ast} H_{\Phi^{\ast}}-H_{\Phi}^{\ast}H_{\Phi}=H_{\Phi^{\ast}}^{\ast}(I-T_{\widetilde{K}}T_{\widetilde{K}}^{\ast}) H_{\Phi^{\ast}}=0,$$
then $T_{\Phi}$ is normal.

\end{proof}

\begin{remark}
If $\theta$ is a factor of $\delta$, then $(\ref{shf33})$ shows that $\omega= \theta$. Hence $\mathcal{K}_{\widetilde{\omega}^{\ast} \widetilde{\theta}}=0$. In this case, we don't obtain that $K$ is a unitary constant.
Therefore, we need to change some details in the proof of Lemma \ref{shly18}.
\end{remark}

\begin{remark}\label{shre2}
One of the key points in the proof of Lemma \ref{shly18} is $(\ref{shf33})$. If $\theta$ is a divisor of $\delta$, and $\widetilde{\Delta'}\widetilde{\Phi_{2}}\widetilde{\Delta'}\widetilde{\Phi_{1}\Phi_{2} }$ has a scalar inner divisor $\widetilde{\theta}$, then by $(\ref{shf31})$, both $\Phi \Phi^{\ast 2} \theta^{2} \delta^{2}$ and $\Phi^{2} \Phi^{\ast } \theta^{2} \delta^{2}$ are analytic. Therefore, the argument of $(\ref{shf37})$ in Step 2 will fail. Hence the condition ``$\theta$ is not a factor of $D(\Delta)$" seems necessary in the above proof or one can show that $\widetilde{\Delta'}\widetilde{\Phi_{2}}\widetilde{\Delta'}\widetilde{\Phi_{1}\Phi_{2} }$  has no scalar inner factor. However, for example, let
\begin{equation}\nonumber
\Theta=\frac{\sqrt{2}}{2}\left(
  \begin{array}{cc}
    z^{3} & -z^{5}\\
    1 & z^{2} \\
  \end{array}
\right),
~\Delta=\left(
\begin{array}{cc}
    \delta_{1} & \\
     & \delta_{2} \\
  \end{array}
\right),
\end{equation}
where $\delta_{1}$ and $\delta_{2}$ are coprime scalar inner functions. It is clear that $\Theta$ has no scalar inner divisor and $\Theta\Delta \neq \Delta \Theta$. However,
\begin{equation}\nonumber
\Theta \Delta \Theta=\frac{1}{2}z^{2}\left(
  \begin{array}{cc}
    \delta_{1}z^{4}-\delta_{2}z^{3} & -\delta_{1}z^{6}-\delta_{2}z^{5} \\
    \delta_{1}z+\delta_{2} &  -\delta_{1}z^{3}+\delta_{2}z^{2}\\
  \end{array}
\right),
\end{equation}
and it has a scalar inner divisor.
\end{remark}

%

Our main theorem of this section now follows:

\begin{theorem}\label{shth2}
Let $\Phi \in L^{\infty}(\mathbb{M}_{n})$ be such that $\Phi$ and $\Phi^{\ast}$ are of bounded type of the form
\begin{equation}\nonumber
  \Phi=\Phi_{1} \theta^{\ast},~\Phi^{\ast}=\Phi_{2}\theta^{\ast} \Delta^{\ast} ~(\text{right coprime}),
\end{equation}
where $\theta$ is a nonconstant scalar inner function and $\Phi_{1},\Phi_{2} \in H^{\infty}(\mathbb{M}_{n})$. If $T_{\Phi}$ and $T^{2}_{\Phi}$ are hyponormal, then $T_{\Phi}$ is either normal or analytic.
 \end{theorem}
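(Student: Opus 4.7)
The plan is to follow Lemma \ref{shly18} and reduce to it whenever possible. If $\theta$ is constant, then $\Phi = \Phi_1 \theta^* \in H^\infty(\mathbb{M}_n)$ and $T_\Phi$ is analytic, so I assume throughout that $\theta$ is nonconstant. Set $\delta := D(\Delta)$ and $\delta I_n = \Delta \Delta'$. When $\theta$ is not an inner factor of $\delta$, Lemma \ref{shly18} already yields the conclusion. The essential new work concerns the remaining case $\theta \mid \delta$, which is precisely the obstruction flagged in Remark \ref{shre2}.

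For that case I would first repeat the derivation of Lemma \ref{shly18} up through equation~(\ref{shf32}): from hyponormality of $T_\Phi^2$ together with the matrix-normality of $\Phi$ (itself a consequence of hyponormality of $T_\Phi$), obtain the Hankel identity
\[
H_{\Phi^*}^* H_{\Phi \Phi^{*2} \theta^2 \delta^2} = H_\Phi^* H_{\Phi^* \Phi^2 \theta^2 \delta^2},
\]
which, using $\Phi = \Phi_1 \theta^*$ and $\Phi^* = \Phi_2 \theta^* \Delta^*$, becomes the kernel identity $\ker H_{\Phi_1 \Phi_2 \Delta' \Phi_2 \Delta' \theta^*} = \ker H_{\Phi_1^2 \Phi_2 \Delta' \delta \theta^*}$. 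By Lemma \ref{shth4} this corresponds to an equality of left greatest common inner divisors; applying Proposition \ref{shly16} to massage the non-commuting products $\widetilde{\Delta'} \widetilde{\Phi_2} \widetilde{\Delta'} \widetilde{\Phi_1 \Phi_2}$ and $\widetilde{\delta\Delta'} \widetilde{\Phi_1^2 \Phi_2}$ into the form demanded by Theorem \ref{shly10} (or Corollary \ref{shly17}) then pins the common left-g.c.d.\ to $\widetilde{\omega}\,I_n$ with $\omega = \text{g.c.d.}(\theta,\delta)$, which equals $\theta$ under our case hypothesis.

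The principal obstacle is the one raised in Remark \ref{shre2}: when $\omega = \theta$ the space $\mathcal{K}_{\widetilde{\omega}^* \widetilde{\theta}}$ collapses to $\{0\}$, and Lemma \ref{shly18}'s range argument (showing the Cowen-Fuhrman multiplier $K$ in $\Phi - K \Phi^* \in H^\infty(\mathbb{M}_n)$ is a unitary constant) yields no usable information, while the coprimality refinement of Theorem \ref{shly10}---that $\omega^* \theta$ and $\delta$ are coprime---becomes vacuous. To circumvent this I intend to use Proposition \ref{shly14} to decompose $\theta\Delta = \Theta_1 \Delta_1$ with $D(\Theta_1) = \theta$ and $D(\Delta_1) = \delta$ (and dually $\theta\Delta = \Theta_2 \Delta_2$ with $D(\Delta_2) = \theta$, $D(\Theta_2) = \delta$), which is possible since $D(\theta\Delta) = \theta\delta$ by Lemma \ref{shly2} and Lemma \ref{shly9}. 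Re-expressing the Douglas-Shapiro-Shields factorization of $\Phi^*$ through these decompositions relocates the $\theta$-contribution of $\delta$ into a separate scalar factor, and iterating---together with the side-procedure in Lemma \ref{shly18}'s proof for absorbing emergent scalar inner divisors of $\Delta$---drives $\text{g.c.d.}(\theta, D(\Delta))$ strictly downward at each step, terminating in the situation where Lemma \ref{shly18} applies directly.

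The main obstacle is therefore not the Hankel identity or the left-g.c.d.\ setup, but the orchestration of this iterative reduction: one must verify at each iterate that the modified factorization remains a valid DSS (in particular, that right-coprimeness of the scalar factor with the analytic factor is preserved), that the inner factors produced by Proposition \ref{shly14} stay free of nonconstant scalar divisors so Theorem \ref{shly10} continues to apply, and that the process strictly decreases the relevant $\text{g.c.d.}$ so it terminates. This is precisely the delicate bookkeeping that Theorem \ref{shly10} and the ``weak commutativity'' machinery developed in Section~3 were constructed to support.
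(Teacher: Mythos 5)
Your setup is right: the constant-$\theta$ case is trivial, the case where $\theta$ does not divide $D(\Delta)$ is exactly Lemma \ref{shly18}, and the whole difficulty is the case $\theta \mid D(\Delta)$ flagged in Remark \ref{shre2}. But the mechanism you propose for that case does not work. The Douglas--Shapiro--Shields factorization is unique up to unitary constants, so the scalar inner function $\theta$ is determined by $\Phi$, the two-sided inner function $\Delta = \theta^{\ast}\cdot(\text{inner part of }\Phi^{\ast})$ is determined by $\Phi^{\ast}$, and hence $\operatorname{g.c.d.}\{\theta, D(\Delta)\}$ is an \emph{invariant} of the symbol. Rewriting $\theta\Delta = \Theta_{1}\Delta_{1}$ via Proposition \ref{shly14} produces a different factorization of the same inner function but cannot ``relocate the $\theta$-contribution of $\delta$'' or drive that g.c.d.\ downward; there is nothing to iterate on. (Also, your claim $D(\theta\Delta)=\theta\delta$ invokes Lemma \ref{shly2}(c), which requires $D(\theta I_{n})=\theta$ and $D(\Delta)=\delta$ to be coprime --- exactly what fails in the case you are treating.)

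What the paper actually does is keep the factorization fixed and iterate on the \emph{multiplier} in the Hankel identity: from hyponormality of $T_{\Phi}^{2}$ one first gets that $\Phi^{2}\Phi^{\ast}\theta^{2}\delta^{2}$ is analytic, and then, peeling off powers of $\theta$ via the coprimality of $\Phi_{1}$ and $\theta$ (Corollary \ref{shly28}), one reruns Step 1 of Lemma \ref{shly18} with $\theta^{2-j}\delta^{2}$ for $j=0,1,\dots,k$, where $\theta^{k}\mid D(\Delta)$ but $\theta^{k+1}\nmid D(\Delta)$. Each iteration extracts genuinely new analyticity information until $\Phi^{2}\Phi^{\ast}\theta^{2-k}\delta^{2}$ fails to be analytic and Step 2 applies. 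The residual case $D(\Delta)=\theta^{k}$ exactly is then excluded by contradiction: applying Theorem \ref{shly10} with $\widetilde{\theta}^{\,k+1}$ in place of $\widetilde{\theta}$ forces $\Omega^{\ast}\widetilde{\theta}^{\,k+1}=\widetilde{\theta}$ and $\widetilde{\delta}=\widetilde{\theta}^{\,k}$ to be coprime, which they are not. This is precisely the coprimality refinement of Theorem \ref{shly10} that you dismissed as vacuous --- it is vacuous only for the exponent $1$; raising the exponent of $\theta$ through the iteration is what makes it bite. Without this iteration-plus-contradiction argument your proof has a genuine gap in the case $\theta\mid D(\Delta)$.
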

 \begin{proof}
 Assume $\theta^{k}$ is a factor of $D(\Delta)$, then $k$ is finite.  The following arguments are divided into two cases: Case I, $D(\Delta)\neq \theta^{k}$ and $\theta^{k+1}$ is not the factor of $D(\Delta)$; Case II, $D(\Delta)=\theta^{k}$.

 If $k=0$, the result is Lemma \ref{shly18}.

 If $k\geq 1$, then $\Phi_{1}^{2}\Phi_{2}\Delta' \delta \theta^{\ast}$ is analytic. By $(\ref{shf32})$ in the proof of Lemma \ref{shly18}, $\Phi_{1}\Phi_{2} \Delta'\Phi_{2} \Delta'\theta^{\ast}$ is analytic. Since $\Phi_{1}$ and $\theta$ are right coprime, by Theorem 4.17 in \cite{CHL2019}, $\Phi_{1}$ and $\theta$ are left coprime, by Corollary \ref{shly28}, $\Phi_{2} \Delta'\Phi_{2} \Delta'\theta^{\ast}$ is analytic.
Hence
$$\Phi^{2} \theta \delta^{2}=\Phi_{1}^{2}(\theta^{\ast}\delta^{2})~\text{and}~\Phi^{\ast 2} \theta \delta^{2}=\Phi_{2}\Delta'\Phi_{2}\Delta' \theta^{\ast}$$
are holomorphic. Therefore, we replace $\theta^{2} \delta^{2}$ with $\theta \delta^{2}$, and repeat Step 1 in the proof of Lemma \ref{shly18}. The same calculation shows that $(\ref{shf25})$ and $(\ref{shf26})$ hold  for $\theta \delta^{2}$.  By further calculation, 
\begin{equation}\label{shf41}
\Phi \Phi^{\ast 2} \theta \delta^{2}=\Phi_{1}\Phi_{2} \Delta'\Phi_{2} \Delta'\theta^{\ast 2},~ \Phi^{2}\Phi^{\ast} \theta\delta^{2}=\Phi_{1}^{2}\Phi_{2}\Delta'\delta \theta^{\ast 2}.
\end{equation}
Since $\Phi_{1}\Phi_{2} \Delta'\Phi_{2} \Delta'\theta^{\ast}$ is analytic, then $(\ref{shf30})$ is changed as follows:
\begin{equation}\nonumber
  \widetilde{\theta}H^{2}(\mathbb{C}^{n}) \subset \ker H^{\ast}_{\Phi \Phi^{\ast 2} \theta\delta^{2}}~\text{and}~ \widetilde{\theta}H^{2}(\mathbb{C}^{n}) \subset \ker H^{\ast}_{\Phi^{\ast} \Phi^{2} \theta\delta^{2}}.
\end{equation}
Therefore,
$$\text{cl-ran}H_{\Phi \Phi^{\ast 2} \theta\delta^{2}} \subset \mathcal{K}_{\widetilde{\theta}}\perp \ker H^{\ast}_{\Phi^{\ast}}~\text{and}~ \text{cl-ran}H_{\Phi^{2} \Phi^{\ast } \theta\delta^{2}} \subset \mathcal{K}_{\widetilde{\theta}} \perp \ker H_{\Phi}^{\ast},$$
and $(\ref{shf42})$ is changed as follows:
\begin{equation}\label{shf44}
  \begin{split}
    \ker H_{\Phi \Phi^{\ast 2} \theta\delta^{2}}
    &=\ker H_{\Phi^{\ast} \Phi^{2} \theta\delta^{2}}.
  \end{split}
\end{equation}
If $k=1$.

For Case I:
Since $\theta^{ 2}$ is not a factor of $\delta$, then turn to Step 2 in the proof of Lemma \ref{shly18}.

In this case,  by $(\ref{shf41})$, $\Phi^{\ast} \Phi^{2} \theta\delta^{2}$ is not analytic, which means that $H_{\Phi^{\ast} \Phi^{2} \theta\delta^{2}}\neq 0$.  Let $\omega=\text{g.c.d.}\{\theta^{2}, \delta\}$, then $$\omega=\theta \text{g.c.d.}\{\theta^{}, \theta^{\ast}\delta\} \neq \theta^{2} .$$
Therefore,
$(\ref{shf33})$ is changed as follows:
\begin{equation}\nonumber
  \text{left-g.c.d.}\{\widetilde{\theta}^{2} I_{n}, \widetilde{\Delta'}\widetilde{\Phi_{2}}\widetilde{\Delta'}\widetilde{\Phi_{2}\Phi_{1} }\}=\text{left-g.c.d.}\{\widetilde{\theta}^{2} I_{n},\widetilde{\delta\Delta'}\widetilde{\Phi_{1}^{2}\Phi_{2} }\}=\widetilde{\omega},
\end{equation}
and
\begin{equation}\nonumber
\text{cl-ran}H_{\Phi^{\ast 2} \Phi \theta\delta^{2}}=\mathcal{K}_{\widetilde{\omega}^{\ast} \widetilde{\theta^{2}}} \subset \mathcal{K}_{\ \widetilde{\theta}}.
\end{equation}
Since $\omega \neq \theta^{2}$, then $\mathcal{K}_{\widetilde{\omega}^{\ast} \widetilde{\theta^{2}}}\neq \{0\}$.
The rest of the proof is similar to Step 2 in Lemma \ref{shly18}'s proof, and we omit it.

For Case II:  By Lemma \ref{shth4}, there exists $\Delta_{1}$ with $ D(\Delta_{1})=\widetilde{\delta}$ such that
\begin{equation}\nonumber
  \text{left-g.c.d.}\{\widetilde{\theta}^{2} I_{n}, \widetilde{\Delta'}\widetilde{\Phi_{2}}\widetilde{\Delta'}\widetilde{\Phi_{2}\Phi_{1} }\}=\text{left-g.c.d.}\{\widetilde{\theta}^{2} I_{n}, \widetilde{\Delta'}\Delta_{1}\}=\widetilde{\Delta'}\Delta_{1}~\text{(by $\theta=\delta$)}.
\end{equation}
However,
$$\text{left-g.c.d.}\{\widetilde{\theta}^{2} I_{n},\widetilde{\delta\Delta'}\widetilde{\Phi_{1}^{2}\Phi_{2} }\}=\widetilde{\delta\Delta'}~\text{(by $\theta=\delta$)}.$$
By Lemma \ref{shly7} and Proposition \ref{shly14}, $\Delta_{1} \neq \widetilde{\delta}$. Therefore
$$ \text{left-g.c.d.}\{\widetilde{\theta}^{2} I_{n}, \widetilde{\Delta'}\widetilde{\Phi_{2}}\widetilde{\Delta'}\widetilde{\Phi_{2}\Phi_{1} }\} \neq  \text{left-g.c.d.}\{\widetilde{\theta}^{2} I_{n},\widetilde{\delta\Delta'}\widetilde{\Phi_{1}^{2}\Phi_{2} }\}.$$
This contradicts $(\ref{shf44})$. Hence $\delta \neq \theta$.

If $k>1$, $(\ref{shf41})$ and $(\ref{shf44})$ imply that $\Phi_{1}\Phi_{2}\Delta'\Phi_{2} \Delta'\theta^{\ast 2}$ is holomorphic. By Corollary \ref{shly28}, $\Phi_{2}\Delta'\Phi_{2} \Delta'\theta^{\ast 2}$ is analytic. Therefore,
$$\Phi^{\ast 2} \delta^{2}=\Phi_{2}\Delta'\Phi_{2} \Delta'\theta^{\ast 2}~\text{and}~\Phi^{ 2} \delta^{2} =\Phi_{1}^{2}(\theta^{\ast 2} \delta^{2})$$
are holomorphic. We replace $\theta\delta^{2}$ with $\delta^{2}$, and repeat Step 1. If $k=2$, for Case I, we turn to Step 2 and complete the proof. For Case II,
$(\ref{shf44})$ is changed as follows:
\begin{equation}\label{shf53}
      \ker H_{\Phi \Phi^{\ast 2} \delta^{2}}
    =\ker H_{\Phi^{\ast} \Phi^{2} \delta^{2}}.
\end{equation}
Since $$ \Phi \Phi^{\ast 2} \delta^{2}=\Phi_{1}\Phi_{2}\Delta'\Phi_{2} \Delta'\theta^{\ast 3}~\text{and}~ \Phi^{\ast}\Phi^{ 2}\delta^{2} =\Phi_{2}\Phi_{1}^{2} \Delta'(\theta^{\ast 3} \delta),$$
then $(\ref{shf53})$ means that
\begin{equation}\label{shf54}
  \text{left-g.c.d.}\{\widetilde{\theta}^{3} I_{n}, \widetilde{\Delta'}\widetilde{\Phi_{2}}\widetilde{\Delta'}\widetilde{\Phi_{2}\Phi_{1} }\} =  \text{left-g.c.d.}\{\widetilde{\theta}^{3} I_{n},\widetilde{\delta\Delta'}\widetilde{\Phi_{1}^{2}\Phi_{2} }\}.
\end{equation}
By Lemma \ref{shly6} and $\delta=\theta^{2}$,
\begin{equation}\nonumber
  \text{left-g.c.d.}\{\widetilde{\theta}^{3}I_{n},\widetilde{\delta\Delta'}\widetilde{\Phi_{1}^{2}\Phi_{2} }\}=\text{left-g.c.d.}\{\widetilde{\theta}^{3}I_{n},\widetilde{\delta\Delta'}\}=\widetilde{\delta}~\text{left-g.c.d.}\{\widetilde{\theta}I_{n} ,\widetilde{\Delta'}\}.
\end{equation}
Then by Lemma \ref{shth4}, there exists $\Delta_{1}, D(\Delta_{1})=D(\widetilde{\Delta'})$ such that
\begin{equation}\nonumber
  \text{left-g.c.d.}\{\widetilde{\theta}^{3}I_{n}, \widetilde{\Delta'}\widetilde{\Phi_{2}}\widetilde{\Delta'}\widetilde{\Phi_{1}\Phi_{2} }\}=\text{left-g.c.d.}\{\widetilde{\theta}^{3}I_{n}, \widetilde{\Delta'}\Delta_{1}\}.
\end{equation}
The above equations show that
\begin{equation}\label{snf73}
  \text{left-g.c.d.}\{\widetilde{\theta}^{3}I_{n},\widetilde{\delta\Delta'}\}=\text{left-g.c.d.}\{\widetilde{\theta}^{3}I_{n}, \widetilde{\Delta'}\Delta_{1}\}.
\end{equation}
Since $D(\widetilde{\Delta'})=D(\Delta_{1})$, by Lemma \ref{shly7} and Proposition \ref{shly14} both $\Delta_{1}$ and $\Delta'$  have no nonconstant scalar inner factor, then by Theorem \ref{shly10}, if $(\ref{snf73})$ holds, then $\Omega^{\ast}\widetilde{\theta}^{3} $ and $\widetilde{\delta}$ are coprime, where $\Omega=\text{g.c.d.}\{\widetilde{\theta}^{3}, \widetilde{\delta} \}$. Since $\delta=\theta^{2}$, then $\Omega=\widetilde{\theta}^{2}$. This shows that $\Omega^{\ast}\widetilde{\theta}^{3} $ and $\widetilde{\delta}$ are not coprime. This contradicts the fact that $(\ref{snf73})$ holds. Hence this case doesn't happen.

In general, we can show that Case II does not appear for any $k$. If $\theta^{k}=\delta$, then  $(\ref{shf54})$ is changed as follows:
 \begin{equation}\label{shf66}
  \text{left-g.c.d.}\{\widetilde{\theta}^{k+1}I_{n}, \widetilde{\Delta'}\widetilde{\Phi_{2}}\widetilde{\Delta'}\widetilde{\Phi_{2}\Phi_{1} }\} =  \text{left-g.c.d.}\{\widetilde{\theta}^{k+1}I_{n},\widetilde{\delta\Delta'}\widetilde{\Phi_{1}^{2}\Phi_{2} }\}.
\end{equation}
 It is clear that
$$\text{left-g.c.d.}\{\widetilde{\theta}^{k+1}I_{n},\widetilde{\delta\Delta'}\widetilde{\Phi_{1}^{2}\Phi_{2} }\}=\widetilde{\delta}~\text{left-g.c.d.}\{\widetilde{\theta}I_{n},\widetilde{\Delta'} \}.$$
The similar argument for $k=2$, we obtain that
$$\Omega=\text{g.c.d.}\{\widetilde{\theta}^{k+1}, \widetilde{\delta} \}=\widetilde{\delta}=\widetilde{\theta}^{k}.$$
Then $\Omega^{\ast}\widetilde{\theta}^{k+1} $ and $\widetilde{\delta}$ are not coprime. By Theorem \ref{shly10}, this contradicts the fact that $(\ref{shf66})$ holds.

If $k>2$, we replace $\delta^{2}$ with $\theta^{\ast}\delta^{2}$ and repeat Step 1.
We summarize the above argument as follows:

\vspace{0.2cm}

We begin with $\theta^{2}\delta^{2}$ and execute Step 1. From $(\ref{shf31})$, it follows that $\Phi^{2}\Phi^{\ast} \theta^{2}\delta^{2}$ is analytic, but then Step 2 fails. Using $(\ref{shf32})$, we find that
$\Phi^{2}\theta \delta^{2}$ and $\Phi^{\ast 2}\theta \delta^{2}$ are analytic.

For the first iteration, we replace $\theta^{2} \delta^{2}$ with $\theta \delta^{2}$ and repeat Step 1. We  obtain that
$\Phi^{2} \delta^{2}$ and $\Phi^{\ast 2} \delta^{2}$ are analytic.

$~~\vdots$

For the $j$th($j<k$) iteration, letting $\theta^{-1}=\theta^{\ast}$, based on the $j-1$th iteration, we find that $\Phi\theta^{3-j}\delta^{2}$ and $\Phi^{\ast}\theta^{3-j}\delta^{2}$ are analytic.
Replacing $\theta^{3-j}\delta^{2}$ with $\theta^{2-j}\delta^{2}$, we repeat Step 1 and obtain:
$$\Phi^{2}\Phi^{\ast}\theta^{2-j}\delta^{2}=\Phi_{1}^{2}\Phi_{2}\Delta'(\theta^{\ast j+1} \delta)$$
is analytic. By a similar argument, we get that $\Phi^{2} \theta^{2-(j+1)} \delta^{2}$ and $\Phi^{\ast 2} \theta^{2-(j+1)} \delta^{2}$ are analytic.

$~~\vdots$

For the $k$th iteration,  replacing $\theta^{3-k}\delta^{2}$ with $\theta^{2-k}\delta^{2}$, we find
$$\Phi^{2} \Phi^{\ast} \theta^{2-k} \delta^{2}=\Phi_{1}^{2}\Phi_{2}\Delta'\delta \theta^{\ast k+1},$$
which is not analytic.
Since Case II doesn't occur, we turn to Step 2, and we obtain the desired result.
\end{proof}
It is worth mentioning that the above theorem may fail if we drop the assumption ``right coprime". The counter example can be found in \cite{CHL2012} (see Remark 4.7).

\begin{corollary}
Let $\Phi \in L^{\infty}(\mathbb{M}_{n})$ be such that $\Phi$ and $\Phi^{\ast}$ are of bounded type of the form
\begin{equation}\nonumber
  \Phi=\Phi_{1} \theta^{\ast}, ~(\text{right coprime}),
\end{equation}
where $\theta$ is a scalar inner function and $\Phi_{1} \in H^{\infty}(\mathbb{M}_{n})$. Then the $2$-hyponormality and subnormality for $T_{\Phi}$ are equivalent.
 \end{corollary}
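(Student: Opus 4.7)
The plan is to handle the two implications separately. ``Subnormal $\Rightarrow$ $2$-hyponormal'' is immediate from the Bram-Halmos criterion \eqref{sh1.1}, so the real content is the reverse direction, which reduces directly to Theorem \ref{shth2}.

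Assume $T_\Phi$ is $2$-hyponormal. The positivity condition \eqref{sh1.1} with $k=2$ has diagonal blocks $[T_\Phi^\ast,T_\Phi]$ and $[T_\Phi^{\ast 2},T_\Phi^2]$, so both $T_\Phi$ and $T_\Phi^2$ are hyponormal. To apply Theorem \ref{shth2} one first needs $\Phi^\ast$ to admit the factorization prescribed there. Since $\Phi^\ast$ is of bounded type, its Douglas-Shapiro-Shields decomposition gives $\Phi^\ast=\Psi\Omega^\ast$ with $\Omega$ two-sided inner, $\Psi\in H^\infty(\mathbb{M}_n)$, and $(\Psi,\Omega)$ right coprime. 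Hyponormality of $T_\Phi$ (together with the Gu-Hendricks-Rutherford normality of the symbol) yields the kernel inclusion $\ker H_{\Phi^\ast}\subseteq \ker H_\Phi$; right-coprimeness of $(\Phi_1,\theta)$ makes $\ker H_\Phi=\theta H^2(\mathbb{C}^n)$, while $\ker H_{\Phi^\ast}=\Omega H^2(\mathbb{C}^n)$, so $\Omega H^2(\mathbb{C}^n)\subseteq \theta H^2(\mathbb{C}^n)$. Applying this to the columns of $\Omega$ forces $\Omega=\theta\Delta$ for some two-sided inner $\Delta$, and since $\theta$ is scalar one can rewrite
\[
\Phi^\ast=\Psi(\theta\Delta)^\ast=\Psi\theta^\ast\Delta^\ast=\Phi_2\theta^\ast\Delta^\ast,\qquad \Phi_2:=\Psi,
\]
with $(\Phi_2,\theta\Delta)$ right coprime, which is precisely the standing hypothesis of Theorem \ref{shth2}.

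Invoking Theorem \ref{shth2}, $T_\Phi$ is either normal or analytic. Normal operators are trivially subnormal. In the analytic case $\Phi\in H^\infty(\mathbb{M}_n)$ and $\Phi$ is pointwise normal a.e.\ on $\mathbb{T}$ (again by the Gu-Hendricks-Rutherford criterion), so multiplication by $\Phi$ on $L^2(\mathbb{C}^n)$ is a normal operator which leaves $H^2(\mathbb{C}^n)$ invariant and whose restriction to this subspace is $T_\Phi$, exhibiting $T_\Phi$ as subnormal. The only nontrivial ingredient in the whole argument is producing the split $\Omega=\theta\Delta$ for the inner part of $\Phi^\ast$'s DSS factorization; this is precisely where the right-coprimeness of $(\Phi_1,\theta)$ and the scalar nature of $\theta$ are used. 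Everything else is either a direct application of Theorem \ref{shth2} or a standard fact about subnormal operators, so no serious obstacle is anticipated.
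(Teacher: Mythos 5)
Your proof is correct and follows exactly the route the paper intends: the paper states this corollary without proof as an immediate consequence of Theorem \ref{shth2}, and your argument supplies the standard details (diagonal blocks of \eqref{sh1.1} for $k=2$ give hyponormality of $T_{\Phi}$ and $T_{\Phi}^{2}$; the kernel inclusion $\ker H_{\Phi^{\ast}}\subseteq\ker H_{\Phi}$ produces the factorization $\Phi^{\ast}=\Phi_{2}\theta^{\ast}\Delta^{\ast}$ required by the theorem, exactly as set up in the paragraph preceding Lemma \ref{shly18}; and normal or analytic-with-normal-symbol Toeplitz operators are subnormal). No gaps.
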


\subsection*{Acknowledgements}
The authors would like to thank the referee for his/her careful reading of the paper and helpful suggestions. This first author was supported by National Natural Science Foundation of China
(No. 12001082) and STU Scientific Research Initiation Grant (No. NTF23012), the second author was supported  by the National Natural Science Foundation of China (No. 12031002), the  third  author was supported  by the National Natural Science Foundation of China (No. 12401151) and the National Postdoctoral Researcher Program (Grant No.GZB20240100).

\subsection*{Conflict of Interests}
The authors declare that they have no conflict of interest.


\normalsize

\end{document}